\newcommand{\MR}[1]{\href{http://www.ams.org/mathscinet-getitem?mr=#1}{MR#1}}
\providecommand\phantomsection{}
\def\namedlabel#1#2{\begingroup
    #2%
    \def\@currentlabel{#2}%
    \phantomsection\label{#1}\endgroup
}
\newtheorem{theorem}{Theorem}[section]
\newtheorem{corollary}[theorem]{Corollary}
\newtheorem{proposition}[theorem]{Proposition}
\newtheorem{lemma}[theorem]{Lemma}
\numberwithin{equation}{section}
\theoremstyle{definition}
\newtheorem{definition}[theorem]{Definition}
\newenvironment{example}
  {\pushQED{\qed}\examplex}
  {\popQED\endexamplex}
\theoremstyle{remark}
\newtheorem{remark}[theorem]{Remark}
\newtheorem{remarks}[theorem]{Remarks}
\newtheorem*{remark*}{Remark}
\newcommand{\veps}{\varepsilon}
\newcommand{\ud}{\,\mathrm{d}}
\newcommand{\Ind}[1]{\mathbbm{1}_{\{#1\}} }  
\newcommand{\Indno}[1]{\mathbbm{1}_{#1} }  
\DeclareMathOperator{\Exp}{\mathbb{E}}
\let\Pr\relax
\DeclareMathOperator{\Pr}{\mathbb{P}}
\newcommand{\bb}[1]{{\mathbb #1}}
\newcommand{\bmu}{\overline{\mu}}
\newcommand{\0}{\mathbf{0}}
\newcommand{\R}{\ensuremath{\mathbb{R}}}
\newcommand{\Z}{\ensuremath{\mathbb{Z}}}
\newcommand{\ZP}{\ensuremath{\mathbb{Z}_+}}
\newcommand{\RP}{\ensuremath{\mathbb{R}_+}}
\newcommand{\N}{\ensuremath{\mathbb{N}}}
\newcommand{\Sp}{\ensuremath{\mathbb{S}}}
\newcommand{\cF}{\mathcal{F}}
\newcommand{\rc}{\textup{c}}
\newcommand{\tra}{{\scalebox{0.6}{$\top$}}}
\DeclareMathOperator{\sign}{sgn} 
\DeclareMathOperator{\trace}{tr}
\newcommand{\Rq}{\mathbb{X}}
\newcommand{\Rqint}{\Rq_\circ}
\newcommand{\Rqbx}{\Rq_1}
\newcommand{\Rqb}{\Rq_\textup{b}}
\newcommand{\Rqby}{\Rq_2}
\newcommand{\Rqc}{\Rq_\textup{c}}
\newcommand{\tRq}{\widetilde{\Rq}}
\newcommand{\tRqint}{\tRq_\circ}
\newcommand{\tRqbx}{\tRq_1}
\newcommand{\tRqb}{\tRq_\textup{b}}
\newcommand{\tRqby}{\tRq_2}
\newcommand{\bbW}{\mathbb{W}}
\newcommand{\bbWint}{\bbW_\circ}
\newcommand{\bbWbx}{\bbW_1}
\newcommand{\bbWby}{\bbW_2}
\newcommand{\bbWc}{\bbW_\textup{c}}
\newcommand{\nuint}{\nu_\circ}
\newcommand{\pint}{p_\circ}
\newcommand{\I}[1]{I_{#1}}
\newcommand{\bI}[1]{\overline{I}_{#1}}
\newcommand{\barq}{\overline{q}}
\newcommand{\tZ}{\ensuremath{\widetilde{Z}}}
\newcommand{\Ts}{\ensuremath{T_\Sigma}}
\newcommand{\bigmid}{\,\big\vert\,}
\newcommand{\Bigmid}{\,\Big\vert\,}
\newcommand{\biggmid}{\,\bigg\vert\,}
\newcommand{\aref}[1]{{\textup{\ref{#1}}}}
\newcommand{\cWs}{\mathcal{W}_\Sigma}
\newcommand{\txi}{\xi'}
\newcommand{\ttau}{\widetilde\tau}
\newcommand{\fR}{\mathfrak{R}}
\newlist{myenumi}{enumerate}{10}
\setlist[myenumi]{leftmargin=0pt, labelindent=\parindent, listparindent=\parindent, labelwidth=0pt, itemindent=!, itemsep=1pt, parsep=4pt}
\newlist{thmenumi}{enumerate}{10}
\setlist[thmenumi]{leftmargin=0pt, labelindent=\parindent, listparindent=\parindent, labelwidth=0pt, itemindent=!}
\begin{document}

\title{Passage-times for partially-homogeneous reflected random walks on the quadrant}

\author{Conrado da~Costa\footnote{\scalebox{0.88}{Department of Mathematical Sciences, Durham
  University, Upper Mountjoy Campus, Durham DH1 3LE, UK.}} \and Mikhail Menshikov\footnotemark[1] \and Andrew
  Wade\footnotemark[1]}
\date{\today}
   
\maketitle

\begin{abstract}
We consider a random walk on the first quadrant of the square lattice, whose increment law is, roughly speaking, homogeneous along a finite number of half-lines near each of the two boundaries, and hence essentially specified by finitely-many transition laws near each boundary, together with an interior transition law that applies at sufficient distance from both boundaries. Under mild assumptions, in the (most subtle) setting in which the mean drift in the interior is zero, we classify recurrence and transience and provide power-law bounds on tails of passage times; the classification depends on the interior covariance matrix, the (finitely many) drifts near the boundaries, and stationary distributions derived from two one-dimensional Markov chains associated to each of the two boundaries. As an application, we consider reflected random walks related to multidimensional variants of the Lindley process, for which the recurrence question  was studied recently by Peign\'e and Woess (\emph{Ann.\ Appl.\ Probab.}\ {\bf 31}, 2021) using different methods, but for which no previous quantitative results on passage-times appear to be known.
\end{abstract}

\medskip

\noindent
{\em Key words:}  Reflecting random walk; partial homogeneity; recurrence classification; passage-time moments; random walks in wedges; multidimensional Lindley process.

\medskip

\noindent
{\em AMS Subject Classification:} 60J10 (Primary) 60G50 (Secondary).

\tableofcontents

\section{Introduction}
\label{sec:intro}

\subsection{Overview}
\label{sec:overview}

Reflecting random walks on lattice orthants have received much attention, with motivation coming from many problems of applied probability in which coordinate processes are constrained to be non-negative, queueing theory being the primary example. 
The interactions of the process with the boundary mean that such processes are typically spatially inhomogeneous, and much work has been done to understand asymptotic behaviour under  the assumption of homogeneity when far from the boundary (in the ``interior''). The literature is extensive, and for a selection of surveys and recent work we indicate~\cite{fmm,fim,raschel,kw19,pw21,vladas,km14,ck18}; \S\ref{sec:literature} below has a  more specific discussion.

The classical case of maximal homogeneity in the quadrant, for example, supposes one increment distribution in the interior, one for the horizontal part of the boundary, and one for the vertical part (what happens at the corner is inessential). However, a consequence of such a strong restriction is that interior jumps cannot exceed one unit in length in the directions of the boundaries. Applications motivate replacing the maximal homogeneity assumption with \emph{partial homogeneity}, which permits finite-range jumps but consequently requires a finite collection of increment distributions and admits more complex behaviour.

In the maximally homogeneous setting, the classification of recurrence, transience, and stability of the reflecting random walk in the quadrant  is well understood (we discuss the literature in some more detail in \S\ref{sec:literature} below).
In the partially homogeneous setting, however,
only the case of \emph{non-zero drift} in the interior
has been addressed completely. The aim of this paper is to study the \emph{zero-drift} setting, which is more subtle, for reasons that we explain later (in the maximally homogeneous case, several decades passed between the solution of the non-zero and zero drift cases).
In particular, we establish a classification between recurrence and transience, and, in the recurrent case, provide quantitative bounds on the tails of return times to finite sets, with lower and upper bounds of matching power-law exponent.

Before explaining in full our model and stating our main result (in~\S\ref{sec:results}, below), in the next section we give an application of our result, which can be stated in isolation with minimal notation and which provides new tail bounds on passage times for two particular classes of reflecting random walks,
which are
 \emph{multidimensional Lindley processes}: see~\cite{ck18,kw19,peigne92,pw21} and references therein.

\subsection{A motivating application}
\label{sec:application}

Throughout the paper we write $\ZP:= \{0,1,2,\ldots\}$, $\N := \{1,2,3,\ldots\}$, and $\RP := [0,\infty)$.
In the present section, as well as in the more general 
setting of~\S\ref{sec:results} below,
we will study a discrete-time, time-homogeneous Markov chain with state space $\Rq := \ZP^2$. 
In the present section, the Markov chains we consider are derived from a sequence of independent random variables, together with some constraints to keep the process in $\Rq$.

Let $\zeta, \zeta_1, \zeta_2, \ldots$ be i.i.d.~random variables on $\Z^2$.
For $z=(x,y) \in \R^2$ define $|z| := (|x|, |y|) \in \ZP^2$
and $z^+ := ( \max \{x, 0\}, \max \{ y, 0 \} )$, i.e.,
the operations act componentwise. Denote by $\| \, \cdot \, \|$ the Euclidean norm on $\R^2$.
When vectors in $\R^2$ appear in formulas, they are to be interpreted as column vectors,
although, for typographical convenience, we sometimes write them as row vectors. Here are the two Markov chains we consider.

\begin{definition}[Lindley random walk]
\label{def:lindley}
The \emph{Lindley random walk}  with increment distribution~$\zeta$
is the Markov process $L = (L_n , n \in \ZP)$ on $\Rq$ defined via
\[ L_{n+1} := ( L_n + \zeta_{n+1} )^+ , \text{ for } n \in \ZP. \]
\end{definition}

\begin{definition}[Mirror-reflected random walk]
\label{def:mirror}
The \emph{mirror-reflected random walk} with increment distribution~$\zeta$ is
the Markov process $M = (M_n , n \in \ZP)$
on $\Rq$ defined via
\[ M_{n+1} := | M_n + \zeta_{n+1} | , \text{ for } n \in \ZP. \]
\end{definition}

These two models have been the subject of some interest in the last few years. 
The Lindley random walk is a multidimensional analogue of the classical \emph{Lindley equation} on $\RP$,
and has been studied in e.g.~\cite{pw21,ck18}.
The mirror-reflected random walk has been studied under the generic name ``random walk with reflections'' (see e.g.~\cite{kw19}), but   in the present paper we avoid this terminology, since the model presented in \S\ref{sec:partial_homogeneity}
is also a random walk with reflections, but of a more general type. 

Let $e_1 := (1,0)$ and $e_2 := (0,1)$ denote the orthonormal basis vectors of $\R^2$,
and 
 $\0 := (0,0)$,  the zero vector. We make the following assumption on the distribution of~$\zeta$.

\begin{description}
\item[\namedlabel{hyp:lindley-increments}{(A)}] 
Suppose that $\Exp [ \| \zeta \|^\nu ] < \infty$ for some $\nu >2$, and $\Exp \zeta = \0$. 
Suppose also that there exists $R \in \ZP$ such that $\Pr ( e_k^\tra \zeta \geq - R ) = 1$ for $k \in \{1,2\}$. Finally, suppose that
$\Sigma = \Exp [ \zeta \zeta^\tra ]$ is 
positive definite, and denote by $\rho = \frac{\Sigma_{12}}{\sqrt{\Sigma_{11} \Sigma_{22}}} \in (-1,1)$
the associated correlation coefficient.
\end{description}

 For a process $Z = (Z_n , n \in \ZP)$ taking values in $\R^2$,
 define the associated \emph{first passage time} of $Z$ into a ball of radius $r >0$ around the origin by
\begin{equation}
    \label{eq:def-passage-time-ball}
    \tau_Z (r) := \inf \{ n \in \ZP : \| Z_n \| \leq r \} .
\end{equation}
The main result of this section, Theorem~\ref{thm:lindley}, 
presents for the processes $L$ and $M$ conditions for transience and recurrence (recovering a variant of a recent result of Peign\'e \& Woess~\cite{pw21})
and also asymptotics for the passage times $\tau_L, \tau_M$, which we believe to be new.
The result will be obtained as a consequence of our more general result on partially-homogeneous random walks given in~\S\ref{sec:results} below.
Recall that the (principal branch of the) arc-cosine function $\arccos : [-1,1] \to [0,\pi]$ is given by
\begin{equation}
    \label{eq:arccos}
 \arccos \lambda := \int_\lambda^1 \frac{1}{\sqrt{1-t^2}} \ud t , \text{ for } -1 \leq \lambda \leq 1. \end{equation}

\begin{theorem}
\label{thm:lindley}
Let $\zeta$ satisfy hypothesis~\aref{hyp:lindley-increments},
and consider
either $Z=L$, 
 the Lindley random walk, 
or $Z=M$, the mirror-reflected random walk, as at Definitions~\ref{def:lindley}
and~\ref{def:mirror}, respectively. Suppose that the Markov chain $Z$ is irreducible on~$\Rq$. Then it holds that $Z$ is recurrent if $\rho > 0$ and $Z$ is transient if $\rho < 0$. Moreover, if $\rho >0$,  
then for every $r>0$ there exists $r_0>r$ such that $\tau_Z(r)$ defined by~\eqref{eq:def-passage-time-ball} satisfies the tail asymptotics 
\begin{equation}\label{eq:tail-Lindley}    
\lim_{n \to \infty} \frac{\log \Pr_z ( \tau_Z (r) > n)}{\log n } = - \left( 1 - \frac{\pi}{2 \arccos{ (- \rho)}}  \right), \text{ for all } z \in \Rq : \| z \| > r_0.
\end{equation}
\end{theorem}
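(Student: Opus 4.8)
The plan is to reduce both models $L$ and $M$ to the general partially-homogeneous framework of \S\ref{sec:results} and then simply quote the general classification/passage-time theorem there. The first task is to verify that the Lindley walk and the mirror-reflected walk are both instances of partially-homogeneous reflected random walks on the quadrant. For $L$, the map $z \mapsto (z+\zeta)^+$ acts as a pure translation by $\zeta$ whenever the current state is at distance more than $R$ from both axes (since then neither coordinate can be clipped, by the assumption $e_k^\tra \zeta \geq -R$ a.s.); near each boundary there are only finitely many ``strips'' (distances $0,1,\dots,R$ from the axis) on which the effective increment distribution differs, so the interior law is that of $\zeta$ and there are finitely many boundary laws. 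The same holds for $M$, where $z \mapsto |z+\zeta|$ is also a translation by $\zeta$ away from the boundary strips. So hypothesis~\aref{hyp:lindley-increments} delivers exactly the data the general theorem needs: a $\nu>2$ moment, zero interior drift $\Exp\zeta=\0$, bounded reflection range $R$, and a positive-definite interior covariance $\Sigma$.

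Next I would compute the two one-dimensional Markov chains associated to each boundary and identify their stationary distributions, since the general theorem's classification is phrased in terms of these together with $\Sigma$ and the boundary drifts. For the Lindley walk the relevant one-dimensional chain near (say) the vertical axis should be the classical Lindley recursion $w \mapsto (w + e_1^\tra\zeta)^+$ on $\ZP$; for the mirror walk it is the reflected chain $w \mapsto |w + e_1^\tra\zeta|$. Because $\Exp[e_1^\tra\zeta]=0$ and $e_1^\tra\zeta\geq -R$ a.s., these chains are positive recurrent (the Lindley chain with zero-mean, negatively-bounded increments is the standard example), so a stationary distribution exists in each case. I expect that, after plugging these stationary laws and the one-step drifts into the general recurrence criterion, the condition collapses — for \emph{both} $L$ and $M$, and for \emph{both} boundaries — to a single inequality on the interior correlation $\rho$, namely recurrence iff $\rho>0$. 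The key structural reason is the reflection symmetry: the boundary contributions, when symmetrised against the stationary measure, contribute a term whose sign is governed by $\Sigma_{12}$, i.e.\ by $\rho$.

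Finally, for the passage-time tail~\eqref{eq:tail-Lindley}: once recurrence holds ($\rho>0$), the general theorem should give the exponent of the polynomial tail of $\tau_Z(r)$ in terms of the same invariants. The exponent in the general result is expressed through a quantity determined by $\Sigma$ and the boundary data; I expect that, after the same symmetrisation/collapse, it simplifies to $1 - \tfrac{\pi}{2\arccos(-\rho)}$. The heuristic is the familiar one for zero-drift walks in a wedge: a linear change of coordinates turns the interior covariance into the identity and maps the quadrant to a wedge of opening angle $\arccos(-\rho)$ (this is the standard computation: the angle between the images of $e_1$ and $e_2$ under $\Sigma^{-1/2}$ has cosine $-\rho$), and for Brownian motion killed on leaving a wedge of half-angle... of opening $\gamma$ the survival probability decays like $n^{-\pi/(2\gamma)}$, giving return-time tail exponent $1-\pi/(2\gamma)$ with $\gamma=\arccos(-\rho)$. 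The main obstacle is the second step: carefully setting up the two boundary Markov chains so that they match the hypotheses of the general theorem, and then executing the algebraic reduction showing that all the boundary-dependent quantities conspire to leave only $\rho$. The reflection/symmetry argument that makes this collapse happen — identical in form for $L$ and $M$ despite their different reflection mechanisms — is the crux; everything else is bookkeeping and an appeal to the general result.
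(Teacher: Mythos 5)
Your plan — verify the hypotheses of Theorem~\ref{thm:main-theorem}, compute the invariants, then substitute — is the paper's, but there is one genuine gap in the crux of the reduction, plus a factual error.

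The gap: the reason the stationary distributions $\pi_1,\pi_2$ drop out is \emph{not} that the boundary contributions ``symmetrise against the stationary measure'' to isolate $\Sigma_{12}$. The actual (and simpler) mechanism is that for both $L$ and $M$, \emph{every individual} boundary drift $\mu_1(y)$, $y\in\I{R}$, is already orthogonal to the corresponding axis. Concretely, for $z=(x,y)\in\Rqbx$ with $x\geq R$, the condition $e_1^\tra\zeta\geq -R$ a.s.\ guarantees $x+e_1^\tra\zeta\geq0$, so the $e_1$-coordinate is never clipped/reflected and $e_1^\tra\mu_1(y)=\Exp[e_1^\tra\zeta]=0$ for every $y$; the $e_2$-component is non-negative with at least one strictly positive, hence $\bmu_1=\|\bmu_1\|e_2\neq\0$ \emph{regardless of the weights $\pi_1(y)$}. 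This is the paper's Lemma~\ref{lem:lindley-drifts}, and it is the reason the boundary-homogenisation machinery becomes invisible in this application: the effective reflection is orthogonal, so Example~\ref{ex:orthogonal} applies directly, giving $\varphi_1=\varphi_2=\varphi_0-\pi/2$ and $\chi=2-\pi/\varphi_0$ with $\varphi_0=\arccos(-\rho)$. Without observing that each $\mu_1(y)$ is purely normal, you are left having to actually compute $\pi_1,\pi_2$ and hope for a cancellation; the paper's observation shows there is nothing to compute.

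The factual error: you claim the one-dimensional boundary chains (Lindley $w\mapsto(w+e_1^\tra\zeta)^+$, and the reflected walk $w\mapsto|w+e_1^\tra\zeta|$) with zero-mean increments are positive recurrent and have stationary distributions. They are \emph{null} recurrent; there is no stationary probability distribution. This does not break the general framework, which (via Proposition~\ref{prop:projection-chains} and Derman's theorem) only needs an invariant \emph{measure}, restricted to the finite set $\I{R}$ and normalised there — but the claim as stated is wrong, and relying on ``a stationary distribution exists because the Lindley chain is positive recurrent'' would be a false step. Your end-game computation (the wedge angle $\arccos(-\rho)$ and the exponent $1-\pi/(2\arccos(-\rho))=\chi/2$) is correct.
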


\begin{remark}
    \label{rem:lindley}
For {the process} $L$, the recurrence/transience part of Theorem~\ref{thm:lindley} 
is contained in  Theorem~1.1(c) of~\cite{pw21},
whose proof combines deep results of Denisov \& Wachtel on exit times from cones~\cite{dw}
with a structural result, Lemma~1.3 of~\cite{pw21},
which relates recurrence to integrability of the time for the $\Z^2$-valued random walk with increment distribution $-\zeta$ to exit from a quadrant. 
The fact that the expected time for a random walk to exit $\RP^2$ is finite if and only if the increment correlation is negative seems to originate with Klein~Haneveld \& Pittenger~\cite{KH87,khp}: see Example~2.6.6 of~\cite[pp.~65--66]{Bluebook} for a short proof (in the context of Brownian motion, the result goes back further, to Spitzer~\cite{spitzer}). 
The recurrence/transience result for {the process} $L$ transfers to {the process}~$M$ using Lemma~5.1 of~\cite{pw21}.
The phenomenon of recurrence/transience behaviour being driven by covariance is explored in a related but different setting in~\cite{gmmw}. 
As far as we are aware, there are no previous results on the tails of return times to compact sets for either~$L$ or~$M$.
\end{remark}

\subsection{Remarks on the literature}
\label{sec:literature}

In the maximally homogeneous setting, the classification of recurrence, transience, and stability of the reflecting random walk in the quadrant 
for the case with \emph{non-zero} drift in the interior goes back to  Kingman~\cite{kingman} in the early 1960s and
Malyshev~\cite{malyshev70,malyshev72b} in the early 1970s, with subsequent refinements including~\cite{rosenkrantz,zachary}.
Generically, the classification depends on the drift vector in the interior and the two  angles produced by the drifts at the boundaries (the \emph{reflections}): see~\cite{fmm,fim} for overviews. The Foster--Lyapunov approach was
outlined by Kingman~\cite{kingman} and (apparently independently)  by Malyshev~\cite{malyshev72b}, the latter in response to a plea from Kolmogorov for a ``purely probabilistic'' alternative to the analytic approach from~\cite{malyshev70}.

The recurrence classification for the case of maximally homogeneous  reflecting random walk on~$\ZP^2$ with \emph{zero-drift} in the interior
was given in the early 1990s in~\cite{afm,fmm92}; see also~\cite{fmm}.
In this case, generically, the classification depends on the increment covariance matrix in the interior as well as the two boundary reflection angles. Passage-time moments were studied in the key work~\cite{AspIasMen96}, with refinements and extensions provided in~\cite{aiSPA,aiTPA,mp2014}.

Since the 1970s, a goal has been to relax the maximal homogeneity assumption;
Zachary~\cite{zachary}, for example,
is motivated to do so in an analysis of loss networks.
Malyshev \& Menshikov~\cite{mm79}, Fayolle \emph{et al.}~\cite{fkm,fayolle89}, and
Zachary~\cite{zachary} all consider partially homogeneous random walks on $\ZP^2$, with similar homogeneity assumptions to our model of~\S\ref{sec:results}, but in the case where the drift in the interior is \emph{non-zero}.
The non-zero drift 
case is 
simpler (see Remark~\ref{rem:moments}\ref{rem:moments-v}  below). In the case of zero-drift in the interior, the only work we know that treats the partially homogeneous setting is  Menshikov \& Petritis~\cite{MenPet04},
which is close to the present work in aims and philosophy, 
but which imposes a simplifying structural assumption (see 
Remark~\ref{rem:moments}\ref{rem:moments-iv} and \S\ref{sec:petritis} below for details).

There is some structural similarity between the present setting and the case of random walks on orthants $\ZP^d$ for $d \geq 3$, homogeneous on lower-dimensional faces of the boundary~\cite{mm79};
the classification for the $d$-dimensional problem requires knowledge of the stationary distribution for some~$(d-1)$-dimensional problems. The case $d=3$ is somewhat tractable, but $d \geq 4$ leads to daunting complexity;
again, the zero-drift case is particularly challenging, and is only partially understood, even in $d=3$.

In the context of multidimensional Lindley processes and their
relatives (see \S\ref{sec:application}), the authors \cite{peigne92,pw21,ck18} work on the continuous state space $\RP^2$, while here we consider the discrete quadrant $\ZP^2$. 
The main reason for our choice is
that 
in our more general setting for reflections (as described in \S\ref{sec:partial_homogeneity} below), our method employs what we call ``stabilization'' of effective boundary reflection angles. Here we rely on certain ratio limit theorems for non-ergodic Markov chains (see \S\ref{sec:stabilization}) and the extension to continuous state-space would introduce significant additional technicality. 
The discrete setting enables us to provide a somewhat explicit algorithm to obtain the limit reflection angles on the boundary as the average of drifts with respect to the invariant measure of an embedded chain on a finite state space. In the case of the Lindley walks, this issue is less significant, since the reflection is always orthogonal and stabilization is ``trivial'' (see Example~\ref{ex:orthogonal} below).  The continuous state-space extension of the present work is  an open problem of interest and some difficulty, although one expects a similar recurrence classification could be obtained following the ideas presented in this paper, subject to overcoming the technical challenges mentioned above.

Finally, we comment on terminology. 
We adopt ``partial homogeneity'', following Borovkov~\cite{borovkov1991} and Zachary~\cite{zachary}, for example. For the same concept, 
Fayolle \emph{et al.}~\cite{fkm,fayolle89} use the phrase 
``limited state dependency''. ``Maximal homogeneity'' is used 
in~\cite{mm79} for what we call partial homogeneity, but in~\cite{fayolle89} the same phrase is used exclusively for the case $R=1$ (in our notation), and we prefer to maintain the latter distinction.

\section{Model and main results}
\label{sec:results}

\subsection{Partially homogeneous random walk}
\label{sec:partial_homogeneity}

In this section, we will study a class of 
time-homogeneous Markov chains with state space $\Rq = \ZP^2$ and one-step transition function $p:\Rq \times \Rq \to [0,1]$. 
That is, suppose that for every initial state $z \in \Rq$ we have a probability space $(\Omega, \cF, \Pr_z)$ equipped with a process $Z = (Z_0, Z_1, \ldots)$ with $Z_n = (X_n,Y_n) \in \Rq$ for all $n \in \ZP$, such that
\begin{align}
\label{eq:def-Pz}
        \Pr_z (Z_0 = z_0, \, Z_1 = z_1, \ldots, \, Z_{n} = z_n ) = \Ind { z= z_0 } \prod_{i=0}^{n-1} p(z_{i},z_{i+1}), \text{ for all } n \in \ZP,
\end{align}
with the usual convention that an empty product is~1. For example, $Z$ might be a Markov chain with transition function~$p$ on a probability space $(\Omega, \cF, \Pr)$ with an arbitrary initial $\Pr$-distribution for $Z_0$, and then $\Pr_z ( \, \cdot \, ) = \Pr ( \, \cdot \mid Z_0 = z)$ is the probability law $\Pr$ conditioned on the initial condition being equal to $z$. 
We write $\Exp_z$ for expectation with respect to $\Pr_z$. Write $\cF_n := \sigma (Z_0, Z_1, \ldots, Z_n) \subseteq \cF$ for the $\sigma$-algebra generated by $Z_0, \ldots, Z_n$, so that~\eqref{eq:def-Pz} implies that $\Pr_z ( Z_{n+1} = w \mid \cF_n ) = p (Z_n, w)$, a.s.; in such expressions, where the initial state~$z$ plays no role, we often drop the subscript and write just~$\Pr$.

For $k \in \ZP$, define $\I{k} := \{ x \in \ZP : 0 \leq x \leq k-1 \}$
and $\bI{k} := \{ x \in \ZP : x \geq k \}$.
Our main structural assumption about the law of $Z$ is \emph{partial homogeneity}; to formulate this, we 
partition $\Rq$ into 4 regions, defined through an integer parameter $R \in \ZP$, as follows.
\begin{itemize}
    \item The \emph{interior} is $\Rqint := \bI{R} \times \bI{R}$.
    \item The \emph{horizontal boundary} is
    $\Rqbx: = \bI{R} \times \I{R}$,
    and  the \emph{vertical boundary} is $\Rqby: = \I{R} \times \bI{R}$;
    we write $\Rqb:= \Rqbx \cup \Rqby$ and call $\Rqb$ simply \emph{the boundary}.
     \item Lastly, the \emph{corner} is the finite region $\Rqc: =  \I{R} \times \I{R}$.
\end{itemize}

Define the \emph{first passage time} of $A \subseteq \Rq$ by
\begin{equation}
\label{eq:def-passage-time}
  \tau_Z(A) : = \inf \{n \in \ZP : Z_n \in A\},
\end{equation}
and let $B_r := \{ z \in \R^2 \colon \| z \| \leq r\}$ denote
the closed Euclidean ball with centre at the origin and radius $r >0$;
in the special case $A=B_r$, we write simply $\tau_Z (B_r) = \tau_Z (r)$,
to coincide with the notation at~\eqref{eq:def-passage-time-ball}.
Here are our assumptions.
\begin{description}
\item[\namedlabel{hyp:partial_homogeneity}{(H)}] 
\emph{Partial homogeneity.}
Suppose that for probability mass functions on $\Z^2$ 
denoted by 
$\pint$, $p_1 (y ; \, \cdot \, )$ ($y \in \I{R}$), and
$p_2 (x; \, \cdot \, )$ ($x \in \I{R}$),
it holds that, for all $z,w \in \Rq$,
\begin{align}
    \label{eq:partial-homogeneity}
    p (z, w ) & = \begin{cases} \pint (w-z) &\text{if } z \in \Rqint ; \\
    p_1 (y ; w-z) &\text{if } z = (x,y) \in \Rqbx; \\
    p_2 (x ; w-z) &\text{if } z = (x,y) \in \Rqby. 
    \end{cases}
\end{align}
\item[\namedlabel{hyp:irreducible}{(I)}]
\emph{Irreducibility.}
Suppose that (i)~for all $z, w \in \Rqint \cup \Rqbx$,
there exists $n \in \N$ such that $\Pr ( Z_{n} = w, \, n < \tau_Z({\Rqby \cup \Rqc}) \mid Z_0 = z) >0$,
and
(ii)~for all $z, w \in \Rqint \cup \Rqby$,
there exists $n \in \N$ such that $\Pr ( Z_{n} = w, \, n < \tau_Z ( {\Rqbx \cup \Rqc} )\mid Z_0 = z) >0$.
\item[\namedlabel{hyp:moments}{(M)}] 
\emph{Bounded moments.}
For $\pint$ as in~\aref{hyp:partial_homogeneity}, there exists $\nuint > 2$ so that $\sum_{z\in \Z^2} \| z \|^{\nuint} \pint(z) < \infty$.
Moreover, for $p_1, p_2$  as in~\aref{hyp:partial_homogeneity},
suppose that there exist $\nu_1, \nu_2 > 1$ for which $\sum_{z\in \Z^2} \| z \|^{\nu_1} p_1 (y; z) < \infty$ for all $y \in \I{R}$
and $\sum_{z\in \Z^2} \| z \|^{\nu_2} p_2 (x; z) < \infty$ for all $x \in \I{R}$. Denote $\nu := \min ( \nuint, \nu_1, \nu_2) > 1$.
\item[\namedlabel{hyp:zero_drift}{(D)}]
\emph{Zero interior drift.} For $\pint$ as in~\aref{hyp:partial_homogeneity}, we have $\sum_{z \in \Z^2} z \pint(z) = \0$.
  \end{description}

\begin{remarks}
\phantomsection
  \label{rems:bounded-jumps}
\begin{myenumi}[label=(\alph*)]
\item\label{rems:bounded-jumps-a}  Given $z = (R,R) \in \Rqint$,
  we have $p(z,z') = 0$ unless $z' = (x',y')$
  satisfies $x' \geq 0, y' \geq 0$ (else $z' \notin \Rq$).
 Hence  the partial homogeneity assumption~\aref{hyp:partial_homogeneity}
implies that jumps towards the boundaries are uniformly bounded, in the sense that $\pint$ satisfies,
for all $z = (z_1, z_2)\in \Z^2$,
  \begin{equation}
    \label{eq:bounded-jumps}
    \pint (z) =0 , \text{ whenever } z_1 \wedge z_2 < - R.
  \end{equation}
A similar conclusion holds for the boundary transition rules, $p_1$ and $p_2$. That is, for all $x,y \in I_R$ and all $z = (z_1,z_2) \in \Z^2$,
  \begin{equation}
    \label{eq:bounded-jumps-12}
    p_1(y;z) = p_2(x;z) = 0  , \text{ whenever } z_1 \wedge z_2 <-R.
  \end{equation}
\item\label{rems:bounded-jumps-b}
The  hypothesis~\aref{hyp:irreducible}
ensures that for any two states in $\Rqint \cup \Rq_k$,
there is a finite-step, positive-probability path between them that avoids the opposite boundary and the corner. A consequence is that for $Z$ all states in $\Rq \setminus \Rqc$ communicate, and hence so do all states in $\Rq \setminus B_r$ for $r > R \sqrt{2}$; it may be, however, that $Z$ is not irreducible on the whole of $\Rq$ (see \S\ref{sec:petritis} below for such an example).
\end{myenumi}
  \end{remarks}

By hypothesis~\aref{hyp:moments} there exists the $2\times 2$ real matrix 
\begin{equation}
\label{eq:sigma-def}
\Sigma := \sum_{z \in \Z^2}   \pint (z) z z^\tra.
\end{equation}
Then
$\Sigma$ is symmetric, 
non-negative definite, and, by hypothesis~\aref{hyp:partial_homogeneity}, satisfies
\begin{equation}
\label{eq:interior-covariance}
\Exp_z [ (Z_1 - Z_0) (Z_1 - Z_0)^\tra ] = \Sigma, \text{ for all } z \in \Rqint. 
\end{equation}

\begin{description}
\item[\namedlabel{hyp:full_rank}{($\Sigma$)}]
We assume that the matrix $\Sigma$ is positive definite, i.e., that $\det \Sigma > 0$.
\end{description}

\begin{remark}
    Note that~\aref{hyp:full_rank} is not a consequence of~\aref{hyp:irreducible}; one can construct, for example, walks that take jumps only $(-1,+1)$ and $(+1,-1)$ in $\Rqint$ but which nevertheless satisfy~\aref{hyp:irreducible}.
\end{remark}

Assuming~\aref{hyp:irreducible}, all states in $\Rq \setminus B_r$ for $r > R \sqrt{2}$ communicate
(see Remark~\ref{rems:bounded-jumps}\ref{rems:bounded-jumps-b}). Hence it holds that either
$\Pr_z ( \tau_Z (B_r) < \infty ) =1$ for all $r >R \sqrt{2}$ and every $z \in \Rq$
(in this case we say $Z$ is \emph{recurrent})
or 
$\Pr_z ( \tau_Z (B_r) = \infty ) > 0$ for all $r >R \sqrt{2}$ and every $z \in \Rq \setminus B_r$
(in which case $Z$ is \emph{transient}).
Moreover, for $\gamma \in \RP$
it is the case that either $\Exp_z [ \tau_Z (r)^\gamma ] < \infty$ for  all $r >R \sqrt{2}$  and every $z\in \Rq$, or else 
$\Exp_z [ \tau_Z(r)^\gamma ] = \infty$ for all $r >R \sqrt{2}$ and every $z \in \Rq \setminus B_r$. The aim of this paper is to provide classification criteria for these qualitative and quantitative descriptions of the asymptotic behaviour of passage times.

\subsection{Classification theorem and tail asymptotics}
\label{sec:theorem}

Here is our main result for the partially-homogeneous random walk on the quadrant,
which classifies recurrence and transience, and, moreover, quantifies
recurrence in terms of tails of moments of hitting times;
recall the definition of $\tau_Z(r)$ from~\eqref{eq:def-passage-time-ball}.

\begin{theorem}
\label{thm:main-theorem}
Suppose that~\aref{hyp:irreducible}, \aref{hyp:partial_homogeneity}, \aref{hyp:moments},
\aref{hyp:zero_drift}, and~\aref{hyp:full_rank} hold,
and recall the definition of parameter $\nu >1$
from~\aref{hyp:moments}.
Then there exists a characteristic parameter~$\chi \in \R$,
defined via~\eqref{eq:chi-def} below, for which the following hold.
\begin{enumerate}[label=(\roman*)]
\item
\label{thm:main-theorem-i}
If $\chi >0$ then the random walk $Z$ is recurrent
  and if $\chi <0$ then   $Z$ is transient.
\item
\label{thm:main-theorem-ii}
Suppose that $\chi >0$. Then, for
all $r>R\sqrt{2}$, $\veps >0$, 
and every $z \in \Rq$,  
there exists $C := C(r,\veps,z) < \infty$ for which 
\begin{equation}
  \label{eq:upper-tail-bound}
  \Pr_z ( \tau_{Z} (r) > n ) \leq C n^{-\frac{\nu \wedge \chi}{2}+\veps},
  \text{ for all } n \in \ZP.
\end{equation}
\item  \label{thm:main-theorem-iii}
Suppose that $\nu > 2 \vee \chi $.
Then, for
all $r>0$ there exists $r_0 > r$ such that, for every $\veps >0$,
there exists $c  > 0$ so that, 
for all $z \in \Rq \setminus B_{r_0}$,
\begin{equation}
  \label{eq:lower-tail-bound}
  \Pr_z (  \tau_{Z} (r) > n ) \geq c n^{-\frac{\chi}{2}-\veps},
  \text{ for all } n \in \ZP.
\end{equation}
  \end{enumerate}
\end{theorem}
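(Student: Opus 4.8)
The plan is to reduce the partially-homogeneous walk to an (almost) homogeneous zero-drift walk in the quadrant, for which the classification and passage-time tails are governed by a spectral/geometric quantity built from the interior covariance $\Sigma$ and two \emph{effective reflection angles} at the boundaries. The first step is \emph{stabilization}: near the horizontal boundary $\Rqbx$, the vertical coordinate $Y_n$ performs, while $X_n$ is large, a Markov chain on $\I{R}$ (a one-dimensional chain on a finite state space); under~\aref{hyp:irreducible} it has a unique invariant distribution, and averaging the drift $\sum_z z\, p_1(y;z)$ against that invariant measure yields a limiting mean drift vector, which (after a linear change of variables that makes $\Sigma$ the identity) defines an effective reflection angle; similarly at $\Rqby$. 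The ratio-limit theorems for non-ergodic Markov chains alluded to in~\S\ref{sec:stabilization} are what make this rigorous: they let one replace the genuine boundary behaviour by its stationary average with an error that is negligible on the relevant scale. The characteristic parameter $\chi$ in~\eqref{eq:chi-def} is then, up to normalization, the exponent $\frac{\pi}{2\psi} - 1$ (or its analogue), where $\psi$ is the opening angle of the cone obtained after the two reflections are ``flattened'' against the transformed coordinate axes; positivity of $\chi$ corresponds to $\psi < \pi$, i.e.\ to the effective reflections pushing the walk back into the quadrant.

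For part~\ref{thm:main-theorem-i} (recurrence/transience) and the upper bound~\ref{thm:main-theorem-ii}, the tool is the semimartingale / Foster--Lyapunov method: I would construct a Lyapunov function $f(z) = \|Mz\|^\alpha g(\arg(Mz))$ for a suitable linear map $M$ diagonalizing $\Sigma$ and an angular profile $g$ solving an appropriate ODE on the transformed wedge (this is the two-dimensional analogue of the $r^\alpha\cos(\cdot)$-type functions used in~\cite{AspIasMen96, MenPet04}), chosen so that in the interior $\Exp_z[f(Z_1) - f(Z_0)]$ has a sign determined by $\sign(\alpha - \alpha_c)$ with $\alpha_c$ the critical exponent tied to $\psi$, while near the boundaries the boundary drift — \emph{after stabilization} — produces a controllable correction of the right sign. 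The moment hypothesis~\aref{hyp:moments} with $\nu > 2$ in the interior and $\nu_1,\nu_2 > 1$ at the boundaries feeds into Taylor-expanding $f$ and bounding the remainder, yielding the supermartingale property outside $B_r$; recurrence then follows from the standard criterion, and the polynomial tail bound~\eqref{eq:upper-tail-bound} with exponent $\frac{\nu\wedge\chi}{2}$ from the passage-time moment results of Aspandiiarov--Iasnogorodski--Menshikov type (the $\nu$ truncation reflecting that the boundary increments only have $\nu>1$ moments, which caps the usable Lyapunov exponent). For the transient case one runs the same construction with the opposite inequality and a bounded submartingale to conclude escape with positive probability.

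For the matching lower bound~\ref{thm:main-theorem-iii} I would again use a Lyapunov function of the same $\|\cdot\|^\alpha g(\arg(\cdot))$ form but now chosen to be a \emph{submartingale} (up to a negligible boundary error) on $\Rq\setminus B_{r_0}$ with $r_0$ large enough that stabilization errors are under control, then apply the standard lower-bound companion to the passage-time moment lemmas (a maximal-inequality / optional-stopping argument giving $\Pr_z(\tau_Z(r) > n) \gtrsim n^{-\chi/2-\veps}$), which requires $\nu > 2\vee\chi$ precisely so that the interior increments control the second-order Taylor term and the Lyapunov exponent $\alpha \approx \chi$ stays within the integrability budget. The main obstacle throughout is the stabilization step: unlike the maximally homogeneous case, the boundary drift is not a single vector but must be obtained as a stationary average of a family of drifts over a finite auxiliary chain, and one must show that the walk spends, on the appropriate time scale, the ``right'' fraction of its time in each boundary stratum so that this average is what the Lyapunov function actually sees — quantifying this uniformly in the distance to the corner, and transferring it into the semimartingale estimates with explicitly controlled error terms, is the technical heart of the argument; the Lindley and mirror-reflected cases of Theorem~\ref{thm:lindley} are comparatively easy because the effective reflection is orthogonal and stabilization is trivial, so $\chi$ reduces to $\frac{\pi}{2\arccos(-\rho)} - 1$ directly.
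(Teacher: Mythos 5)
Your proposal follows the paper's overall architecture closely: linear change of coordinates sending $\Sigma$ to the identity (producing a wedge of opening $\varphi_0$), stabilization of the boundary drift via ratio-limit theorems for null-recurrent chains, polar-harmonic Lyapunov functions of $r^\beta\cos(\beta\theta-\beta_1)$ type, and the Foster--Lyapunov machinery from Aspandiiarov--Iasnogorodski--Menshikov for the upper and lower tail bounds. Your heuristic description of $\chi$ as the exponent governing passage from a ``flattened'' wedge is also the right intuition (it reduces to $2-\pi/\arccos(-\rho)$ in the orthogonal case, consistent with Theorem~\ref{thm:lindley}).

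However, there is a genuine gap at precisely the point you flag as ``the technical heart'': you observe that stabilization delivers an averaged drift $\bmu_k$ over many steps, while the supermartingale estimates require control of the \emph{one-step} conditional increment of the Lyapunov function, and you note that ``transferring it into the semimartingale estimates'' is needed, but you do not say how. This is not a routine quantification issue. When $R>1$ there is, by design, \emph{no single} reflection vector near the boundary, so the one-step conditional increment of $h^\alpha(T_\Sigma Z)$ at a boundary point $z$ does not have a determinable sign at all --- it depends on the particular boundary stratum $\I{R}$-coordinate, and cannot be made small by pushing $\|z\|\to\infty$. The paper's resolution is a \emph{stochastic time change}: it introduces the compressed process $\tZ^N_n = \Ts Z_{T_N(n)}$, where $T_N$ advances by $N$ steps whenever the walk is at the boundary and by $1$ step in the interior (see~\eqref{eq:time-embedd}--\eqref{eq:time-change-process}). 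Because the interior increments already have zero drift, the interior Taylor estimate (Proposition~\ref{prop:interior-estimate}) is unchanged, while the $N$-step boundary increment of $\tZ^N$ has conditional mean $\lambda_k^N(z)\,\Ts\, d_k^N(z')$, which by Proposition~\ref{prop:stabil} is, for $N$ large and $\|z\|$ large, as close as desired to $\lambda_k^N(z)\Ts\bmu_k$ --- a fixed direction whose sign against $\nabla h$ is controlled by the choice $\beta_k\approx\varphi_k$. One then needs a separate comparison lemma (Lemma~\ref{lem:time-change}) to transfer tail estimates for $\tZ^N$ back to $Z$. Without the time compression, the supermartingale step of your argument would fail at the boundary when $R>1$. (A secondary, related omission: your Lyapunov function is $r^\alpha g(\theta)$ with $g$ an angular eigenfunction, whereas the paper needs the two-parameter family $h^\alpha$ with $h=r^\beta\cos(\beta\theta-\beta_1)$, because the sub/supermartingale sign in the interior is controlled by $\alpha\gtrless 1$ while the boundary sign is controlled by $\beta_k\gtrless\varphi_k$; conflating these into a single exponent obscures how both constraints are simultaneously satisfiable.)
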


\begin{remarks}
\phantomsection
\label{rem:moments}
\begin{myenumi}[label=(\alph*)]
\item 
\label{rem:moments-i}
The parameter $\chi$
is a function of the interior covariance matrix $\Sigma$
from~\eqref{eq:sigma-def} and the angles subtended at the boundaries
by two \emph{effective boundary drift vectors},
$\bmu_1$ and $\bmu_2$ defined at~\eqref{eq:mu-def} below. A full description follows these remarks.
\item 
\label{rem:moments-ii}
Compactly, we can summarize~\eqref{eq:upper-tail-bound} and~\eqref{eq:lower-tail-bound} through the log-asymptotic
\[ \lim_{n \to \infty} \frac{\log \Pr_z ( \tau_Z(r) > n )}{\log n} = - \frac{\chi}{2} , \]
which holds for every $r>R\sqrt{2}$
and every $z \in \Rq \setminus B_{r_0}$ for appropriate $r_0 >r$, provided that the increment moment bound~\aref{hyp:moments} holds with $\nu > 2 \vee \chi$.
In terms of the passage-time moments defined through~\eqref{eq:def-passage-time}, 
it follows from~\eqref{eq:upper-tail-bound}  that for every  $A \subseteq \Rq$ 
that contains at least one element of $\Rq \setminus \Rqc$, 
and all $z \in \Rq$,
  \begin{equation}
    \label{eq:moment-finite}
    \Exp_z[ \tau_Z(A)^\gamma ] < \infty, \text{ for all }  \gamma \in \bigl(0, \tfrac{\nu \wedge \chi}{2} \bigr).
  \end{equation}
  On the other hand,  provided $\nu > 2 \vee \chi$, 
  it follows from~\eqref{eq:lower-tail-bound} that for
every finite $A \subset \Rq$ and all $z \in \Rq$ for which $\inf_{y \in A} \| z - y \|$ is large enough, 
 \begin{equation}
    \label{eq:moment-infinite}
    \Exp_z[ \tau_Z(A)^\gamma ] = \infty, \text{ for all }  \gamma > \tfrac{\chi}{2} .
  \end{equation}
\item 
\label{rem:moments-iii}
If in addition to~\aref{hyp:irreducible} one imposes the condition that there exists $\veps>0$ such that 
\begin{equation}
    \label{eq:ue}
\inf_{x \in \R^d} \Pr_z ( x^\tra ( Z_1- Z_0 ) \geq \veps \| x \|) \geq \veps \text{ for all } z \in \Rq, 
\end{equation}
then one can replace $r_0$ by $r$ in
Theorem~\ref{thm:main-theorem}\ref{thm:main-theorem-iii}, since there is uniformly positive probability to go from $z \in \Rq \setminus B_r$
to $\Rq \setminus B_{r_0}$ without visiting $B_r$. Condition~\eqref{eq:ue} is a form of \emph{uniform ellipticity},
and it is implied for $z \in \Rqint$ by~\aref{hyp:zero_drift} and~\aref{hyp:full_rank}, but the hypotheses of Theorem~\ref{thm:main-theorem} do not guarantee that~\eqref{eq:ue} holds for \emph{all} $z \in \Rq$.
\item 
  \label{rem:moments-iv}
Menshikov \& Petritis (MP)~\cite[p.~148]{MenPet04}  
obtain a version of Theorem~\ref{thm:main-theorem}
under an additional \emph{left-continuity} hypothesis that jumps are   at most unit size in the south and west directions. (The result of~\cite{MenPet04} is formally weaker than ours, as it establishes non-existence of moments, as at~\eqref{eq:moment-infinite}, but not the lower tail bound~\eqref{eq:lower-tail-bound}.)
The proof of~\cite{MenPet04} has several common elements with ours, as we describe at the appropriate points below,
but takes an algebraic approach to constructing Lyapunov functions, using a Fredholm alternative;
here we take a probabilistic approach, based on the mixing of the process near the boundary and what we call \emph{stabilization} of the effective boundary drifts (see \S\ref{sec:stabilization} for details). 
MP~remark~\cite[p.~143]{MenPet04} that the left-continuity assumption ``is important and cannot be relaxed'' in their approach;
indeed, it provides a structural simplification, as we explain in \S\ref{sec:petritis} below, along with further details of the model of~\cite{MenPet04}.
\item \label{rem:moments-v}
As mentioned in \S\ref{sec:literature}, the papers~\cite{mm79,fkm,fayolle89,zachary} all consider partially homogeneous random walks on $\ZP^2$, with similar homogeneity assumptions to~\S\ref{sec:partial_homogeneity}, but in the case where the interior drift is \emph{non-zero}.
The non-zero drift case is simpler than the zero-drift case
treated in the present paper and by MP~\cite{MenPet04}
in two important respects. First,
the interior covariance plays no part in the classification, and second, that the boundary drifts only play a role  when the corresponding one-dimensional process is ergodic, i.e., the interior drift has a negative component in the corresponding direction. 
The technical implications of these two differences are that in the non-zero drift setting, firstly, one
can use simpler Lyapunov functions (e.g., ``almost-linear'' functions, to use Malyshev's terminology)
than those based on the harmonic functions from~\S\ref{sec:harmonic},
and, secondly, that the 
necessary mixing that ``homogenizes'' the non-homogeneous boundary behaviour (through, e.g.,
the stabilization property) 
is provided by the ordinary ergodic theorem, rather than the (deeper) ratio theorem for null-recurrent processes that we use here (see \S\ref{sec:stabilization}). 
\item The case $\chi=0$ is not covered by
Theorem~\ref{thm:main-theorem} (nor is it covered in the restricted setting of~\cite{MenPet04}). The question of recurrence when $\chi=0$ is subtle, and seems to require stronger hypotheses to settle one way or the other. In the particular case of the maximally homogeneous walk studied in \cite{afm} it is shown, using $\log \log $ of quadratic functions,  that, in the critical case, corresponding to our $\chi = 0$, the process is recurrent. In particular, the return times $\tau$ are finite almost surely, but $\Exp[\tau^\gamma] = \infty$ for all $\gamma>0$: see~\cite[Theorem 2.1]{afm}. 
Moreover, in the case of the two-dimensional Lindley walk, under certain assumptions that put the process into the $\chi=0$ class, recurrence is also established: see Corollary 4.13 of~\cite{pw21}.
The above two cases show that recurrence is the natural property to expect when $\chi=0$. However, 
any recurrence is extremely finely balanced, and  in similar critical situations, second-order terms can lead either to transience or recurrence. The technical obstacle in the present context is that the stabilization of the boundary reflections does not provide sufficient rate of convergence to conclude either way. 
  \end{myenumi}
\end{remarks}

There are three main components that define the characteristic parameter $\chi$ at the heart of Theorem~\ref{thm:main-theorem}: first, a linear transformation $\Ts$ of the
quadrant associated with the covariance matrix $\Sigma$;  second, two
probability measures $\pi_1, \pi_2$ that describe the relative occupation
distributions of the process within each of the two boundaries; and, third, 
the post-transformation reflection angles $\varphi_1, \varphi_2$ between the normal to the respective boundary and the drift vector averaged according to the relevant $\pi_1, \pi_2$. 

First we explain the origin of the $\pi_k$. To do so,
 define vertical and horizontal projections of $p$, 
$q_1, q_2 : \ZP \times \ZP \to [0,1]$,  by setting for $i, j \in \ZP$,
\begin{equation}
\label{eq:q1-def}
    q_1( i , j ) := \begin{cases}
  \sum_{x \in \Z} p_1 (i ; (x, j ) ) & \text{if } i \in \I{R}, \\
  \sum_{x \in \Z} \pint ( (x, j-i) ) & \text{if } i \in \bI{R}, \end{cases}
\end{equation}
  and 
\begin{equation}
\label{eq:q2-def}
q_2( i , j ) := 
\begin{cases} 
\sum_{y \in \Z} p_2 (i ; (j,y) ) & \text{if } i \in \I{R}, \\
  \sum_{y \in \Z} \pint ( (j-i,y) ) & \text{if } i \in \bI{R} . 
\end{cases} 
\end{equation}
Note that $\sum_{j \in \ZP} q_1 (i , j) =1$ for all $i \in \ZP$ and $\sum_{j \in \ZP} q_2 (i,j) = 1$ for all $i \in \ZP$.

\begin{proposition}
\phantomsection
\label{prop:projection-chains} 
\begin{enumerate}[label=(\roman*)]
\item
\label{prop:projection-chains-i}
The transition functions $q_1$ and $q_2$ define irreducible Markov chains on $\ZP$.
\item
\label{prop:projection-chains-ii}
There exist unique positive measures $\pi_1, \pi_2$ on~$\ZP$ that 
have $\pi_1 ( \I{R} ) = \pi_2 (\I{R}) =1$ and
are invariant under $q_1$, $q_2$, respectively, i.e.,
for all $j \in \ZP$,
\begin{equation}
\label{eq:invariant-pi-def}
\sum_{i \in \ZP} \pi_1 (i) q_1(i,j) = \pi_1 (j) , \text{ and } \sum_{i \in \ZP} \pi_2 (i) q_2(i,j) = \pi_2 (j).
\end{equation}
\item
\label{prop:projection-chains-iii}
Define 
\begin{equation}\label{eq:def-eta_k}
\eta_{1}: = \inf\{ n \in \ZP :  X_n < R\}, \text{ and } \eta_{2}: = \inf\{ n \in \ZP :  Y_n < R\}. 
\end{equation}
 Then, before time~$\eta_1$,
$Y$ evolves as a Markov chain with transition function~$q_1$, i.e.,
\begin{equation}
\label{eq:q1-transition}
\Pr  ( Y_{n+1} = y \mid \cF_n ) = q_1 ( Y_n , y ), \text{ on } \{ n < \eta_{1} \}.    
\end{equation}
Similarly,
\begin{equation}
\label{eq:q2-transition}
\Pr  ( X_{n+1} = x \mid \cF_n ) = q_2 ( X_n , x ), \text{ on } \{ n < \eta_{2} \}.
\end{equation}
\end{enumerate}
\end{proposition}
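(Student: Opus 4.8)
The plan is to prove part~\ref{prop:projection-chains-iii} first, since it underpins the other two, and then to deduce parts~\ref{prop:projection-chains-i} and~\ref{prop:projection-chains-ii} from it. For~\ref{prop:projection-chains-iii}: on the event $\{n < \eta_1\}$ one has $X_m \ge R$ for every $m \le n$, so $Z_n \in \Rqint \cup \Rqbx = \bI{R} \times \ZP$. The conditional law of $Y_{n+1}$ given $\cF_n$ is the image, under projection to the second coordinate, of the one-step law $p(Z_n, \cdot\,)$, which we read off by cases: if $Y_n \in \bI{R}$ then $Z_n \in \Rqint$ and $Z_{n+1} = Z_n + \xi$ with $\xi$ conditionally $\pint$-distributed, while if $Y_n \in \I{R}$ then $Z_n \in \Rqbx$ and the step is governed by $p_1(Y_n; \cdot\,)$. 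In both cases, because $X_n \ge R$, the bounded-jumps bounds~\eqref{eq:bounded-jumps} and~\eqref{eq:bounded-jumps-12} (jumps towards a boundary are at least $-R$) ensure the one-step law puts no mass on $\{X_{n+1} < 0\}$, so summing the first coordinate over $\ZP$ coincides with summing it over all of $\Z$; that sum is exactly the defining expression for $q_1(Y_n, \cdot\,)$ in~\eqref{eq:q1-def}, which is~\eqref{eq:q1-transition}. The identity~\eqref{eq:q2-transition} follows by the symmetric computation, interchanging the two coordinates (and $q_1$ with $q_2$); along the way one re-derives that each $q_k(i, \cdot\,)$ is a probability distribution on $\ZP$, as already noted after~\eqref{eq:q2-def}.

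For part~\ref{prop:projection-chains-i}: fix $i, k \in \ZP$ and apply the first clause of hypothesis~\aref{hyp:irreducible} with $z := (R,i)$ and $w := (R,k)$, both of which lie in $\Rqint \cup \Rqbx$; this produces $n \in \N$ with $\Pr_z ( Z_n = w,\, n < \tau_Z(\Rqby \cup \Rqc) ) > 0$. Since $\Rqby \cup \Rqc = \I{R} \times \ZP$, the event $\{ n < \tau_Z(\Rqby \cup \Rqc) \}$ equals $\{ X_m \ge R \text{ for all } m \le n \} = \{ \eta_1 > n \}$, and on it $Y$ moves by the transition law $q_1$ by part~\ref{prop:projection-chains-iii}. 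Iterating~\eqref{eq:q1-transition} (equivalently, coupling $(Y_m)_{m \le \eta_1}$ with a $q_1$-chain started at $i$) yields $q_1^n(i,k) \ge \Pr_z ( Y_n = k,\, \eta_1 > n ) \ge \Pr_z ( Z_n = w,\, n < \tau_Z(\Rqby \cup \Rqc) ) > 0$, so $q_1$ is irreducible on $\ZP$; the second clause of~\aref{hyp:irreducible} gives irreducibility of $q_2$ in the same way.

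For part~\ref{prop:projection-chains-ii}: given~\ref{prop:projection-chains-i}, it suffices to show $q_1$ (and, symmetrically, $q_2$) is \emph{recurrent}, since an irreducible recurrent Markov chain on a countable state space admits a strictly positive invariant measure, unique up to a multiplicative constant, and since $\I{R}$ is finite and non-empty this measure assigns it finite positive mass, so rescaling gives the unique invariant measure $\pi_1$ with $\pi_1(\I{R}) = 1$. To prove recurrence of $q_1$ we use Foster's criterion with the unbounded non-negative function $f(i) := \log(1+i)$. For $i \in \bI{R}$, under $q_1$ the displacement $j \mapsto j - i$ is distributed as the second coordinate of a $\pint$-distributed increment, which has mean zero by~\aref{hyp:zero_drift}, finite $\nuint$-th moment (with $\nuint > 2$) by~\aref{hyp:moments}, and variance $\Sigma_{22} > 0$ --- were $\Sigma_{22} = 0$ the displacement would be degenerate at $0$ and $q_1$ could not move within $\bI{R}$, contradicting the irreducibility from~\ref{prop:projection-chains-i}. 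A second-order expansion then gives $\sum_{j \in \ZP} q_1(i,j)\, f(j) - f(i) = -\tfrac{\Sigma_{22}}{2(1+i)^2}(1 + o(1))$ as $i \to \infty$ --- the error controlled because displacements are bounded below by $-R$ and have a moment of order strictly above $2$ --- so $\sum_j q_1(i,j)\, f(j) \le f(i)$ for all $i$ outside a finite set (enlarge $\I{R}$ to absorb the finitely many further exceptions). Foster's criterion then yields recurrence of $q_1$, and the mirror argument handles $q_2$.

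The main obstacle is the bookkeeping in part~\ref{prop:projection-chains-iii}: one must treat the interior and boundary regimes uniformly and check that the constraint $X_{n+1} \ge 0$ is rendered vacuous by $X_n \ge R$ together with the bounded-jumps property, so that the first-coordinate marginal of the one-step law reproduces precisely the sums defining $q_1$ in~\eqref{eq:q1-def}. Granting that one-step identity, part~\ref{prop:projection-chains-i} is a routine transfer of irreducibility and part~\ref{prop:projection-chains-ii} a standard Lyapunov argument for recurrence of a zero-drift reflected walk on $\ZP$.
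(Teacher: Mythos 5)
Your proof is correct, and parts~\ref{prop:projection-chains-iii} and~\ref{prop:projection-chains-i} follow essentially the paper's own argument: establish the one-step conditional identity~\eqref{eq:q1-transition} by projecting the one-step law and noting that the boundedness of westward jumps (\eqref{eq:bounded-jumps}, \eqref{eq:bounded-jumps-12}) together with $X_n \ge R$ makes the restriction to $\ZP$ in the first coordinate vacuous, then iterate this in a Chapman--Kolmogorov fashion to get $q_1^{(n)}(i,j) \ge \Pr_z(Y_n = j,\, \eta_1 \ge n)$ and appeal to~\aref{hyp:irreducible} to produce positivity. (You are if anything a little more explicit than the paper about the ``sum over $\ZP$ equals sum over $\Z$'' step, which the paper passes over silently.)

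Where your write-up genuinely departs from the paper is part~\ref{prop:projection-chains-ii}. The paper settles~\ref{prop:projection-chains-ii} by invoking Derman's theorem (existence and uniqueness, up to scaling, of a positive invariant measure for an irreducible \emph{recurrent} chain) and does not supply an explicit verification of recurrence of $q_1$ and $q_2$; recurrence is subsequently asserted in the proof of Proposition~\ref{prop:stabil} when applying the Doeblin ratio theorem, again without a dedicated argument. You recognise that Derman's theorem hinges on recurrence and close the loop with a Foster--Lyapunov argument: $f(i) = \log(1+i)$ is a non-negative, $\to\infty$ Lyapunov function whose one-step drift under $q_1$ is, on $\bI{R}$, asymptotically $-\Sigma_{22}/(2(1+i)^2) < 0$, with error control from the bounded-below jumps, the zero interior drift~\aref{hyp:zero_drift}, and the $\nuint>2$ moment bound in~\aref{hyp:moments}; since the remaining exceptional states are finitely many, the recurrence criterion (Theorem~\ref{thm:recurrence}\ref{thm:recurrence-i}) applies. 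Your side remark that $\Sigma_{22} > 0$ can be extracted from part~\ref{prop:projection-chains-i} rather than invoking~\aref{hyp:full_rank} is a nice observation, since the proposition is stated before~\aref{hyp:full_rank} is introduced and Proposition~\ref{prop:stabil} is stated under~\aref{hyp:irreducible}, \aref{hyp:partial_homogeneity}, \aref{hyp:moments}, \aref{hyp:zero_drift} only. In short: your proof matches the paper's in structure but is more self-contained on the recurrence step, which the paper leaves tacit.
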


Define $\mu_1, \mu_2 : \I{R} \to \R^2$, giving the drift vectors in $\Rqbx, \Rqby$ respectively, by
\begin{equation}
  \label{eq:mu-function}
  \mu_1 (i) := \sum_{z\in\Z^2} z p_1 (i ; z), \text{ and } \mu_2 (i) := \sum_{z\in\Z^2} z p_2 (i ; z) , \text{ for } i \in \I{R},
\end{equation}
and averages of these drifts according to $\pi_1, \pi_2$ given in  Proposition~\ref{prop:projection-chains}\ref{prop:projection-chains-ii} by
 \begin{equation}
  \label{eq:mu-def} \bmu_1 := \sum_{i\in\I{R}} \pi_1 (i) \mu_1 (i),  \text{ and } \bmu_2  := \sum_{i\in\I{R}} \pi_2 (i) \mu_2 (i).
\end{equation}

Under hypothesis~\aref{hyp:full_rank}, the positive-definite, symmetric matrix $\Sigma$ has a (unique) positive-definite, symmetric square root $\Sigma^{1/2}$ with an inverse $\Sigma^{-1/2}$. For any orthogonal
matrix $O$, the $2 \times 2$ real matrix $T = O \Sigma^{-1/2}$ satisfies $T \Sigma T^\tra = I$ (the identity).
If we stipulate that $T e_1$ is in the $e_1$ direction, and $T e_2$ has positive inner product with $e_2$,
this fixes~$O$ and gives us a unique $T = \Ts$ (say) such that $\Ts \Sigma \Ts^\tra = I$ 
and
\begin{equation}
    \label{eq:Ts-choices}
    e_1^\tra \Ts e_1 > 0, ~~ e_2^\tra \Ts e_2 > 0, ~ \text{and}~ e_2^\tra \Ts e_1 = 0 ;
\end{equation}
see~\eqref{eq:linearmap-explicit} for an explicit formula for~$\Ts$.
We also use $\Ts$ to denote the associated linear transformation $x \mapsto \Ts x$ on $\R^2$. Define the transformed process $\tZ$ by $\tZ_n := \Ts Z_n$, $n\in\ZP$. Then, for $z \in \Rqint$, by linearity,
the zero-drift assumption~\aref{hyp:zero_drift} implies that 
\begin{equation}
\label{eq:transformed-drift}
\Exp_z [ \tZ_{1} - \tZ_0 ]  = \Ts \Exp_z [ Z_{1} -  Z_0 ] = \0, \text{ for all } z \in \Rqint, \end{equation}
(where we emphasize that $\Exp_z$ refers still to the initial condition $Z_0 = z$). Also,
\begin{equation}
\label{eq:transformed-covariance} 
\Exp_z \bigl[ (\tZ_{1} - \tZ_0) (\tZ_{1} - \tZ_0)^\tra \bigr] = \Ts \Exp_z \bigl[ (Z_{1} -  Z_0) (Z_{1} -  Z_0)^\tra \bigr] \Ts^\tra = I, \text{ for all } z \in \Rqint,
\end{equation}
by~\eqref{eq:interior-covariance} and choice of $\Ts$.
To define angles let us denote by
$R_\theta$ the anti-clockwise rotation around the origin of $\R^2$ by angle~$\theta$, i.e., let
$R_\theta :\R^2 \to \R^2$ be defined by
\begin{equation}
  \label{eq:rotation-def}
R_\theta(x,y) := (x \cos \theta -y \sin \theta, x \sin \theta + y
\cos \theta).  
\end{equation}
Recall from \S\ref{sec:application}
that $e_1 = (1,0)$ and $e_2=(0,1)$ denote the canonical orthonormal basis vectors of $\R^2$,
and $\0$ is the origin.
Let $\Sp := \{ z \in \R^2 : \| z \| =1\}$.
Given $z,w\in \R^2 \setminus \{\0\}$, we say that $\theta \in (-2\pi,2\pi)$ is the (oriented) angle between $w$ and $z$  if 
\begin{equation}
\label{eq:angle-def}
    \frac{R_\theta(w)}{\|w\|} =  \frac{z}{\|z\|}.
\end{equation}
Note that if $\theta$ is the angle between $z,w$, then $-\theta$ is the angle between $w$ and $z$. 
Define 
\begin{itemize}
    \item $\varphi_0  \in (0,\pi)$, the angle between $\Ts e_1$ and $\Ts e_2$.
    \item $\varphi_1 \in (-\pi/2,\pi/2)$, the angle between $R_{\pi/2}(\Ts e_1)$ and $\Ts \bmu_1$.
    \item $\varphi_2 \in (-\pi/2,\pi/2)$, the angle between  $\Ts \bmu_2$ and $R_{-\pi/2}(\Ts e_2)$.
\end{itemize}
See Figure~\ref{fig:transform-angles} for an illustration.
With this notation in hand, we
define  the characteristic parameter $\chi \in \R$ introduced in Theorem~\ref{thm:main-theorem} by
\begin{equation}
  \label{eq:chi-def}
  \chi : = \frac{\varphi_1 + \varphi_2}{\varphi_0}.
\end{equation}

\begin{figure}[!ht]
\begin{center}
\scalebox{0.865}{
\begin{tikzpicture}[domain=0:10, scale = 1.0]
\filldraw (0,0) circle (1.6pt);
\draw[black,->,>=stealth,line width =.4mm] (0,0) -- (5,0);
\draw[black,->,>=stealth,line width =.4mm] (0,0) -- (0,5);
\draw[black,-,>=stealth,dashed,line width =.3mm] (0,2.5) -- (1.4,2.5);
\draw[black,-,>=stealth,dashed,line width =.3mm] (2.5,0) -- (2.5,1.4);
\draw[black,->,>=stealth,dotted,line width =.3mm] (0,2.5) -- (1.264911,2.5+0.6324555);
\draw[black,->,>=stealth,dotted,line width =.3mm] (2.5,0) -- (1.5,1.0);
\node at (4,4.5) {$\Sigma = {\scriptstyle\begin{pmatrix} 3 & -1\\-1 & 3\end{pmatrix}}$};
\node at (-0.25,0) {$\0$};
\node at (5.4,0) {$e_1$};
\node at (0,5.4) {$e_2$};
  \draw[-] (2.5,1) arc (90:135:1);
   \node at (2.24,1.3) {$\theta_1$};
     \draw (1,2.5) arc (0:26:1);
       \node at (0.6,2.1) {$\theta_2$};
\draw[rotate around={45:(3,3)}] (3,3) circle [x radius=0.5, y radius=1];
        \node at (1.5,1.3) {$\bmu_1$};
                \node at (1.2,3.4) {$\bmu_2$};
\draw[black,->,>=stealth,line width =.3mm] (6,2.5) -- (8,2.5);
    \node at (7,1.6) {$\Ts = {\scriptstyle\begin{pmatrix} \sqrt{2}/4 & \sqrt{2}/12 \\ 0 & 1/3 \end{pmatrix}}$};
\end{tikzpicture}}
\hfill
\scalebox{0.865}{
\begin{tikzpicture}[domain=0:10, scale = 1.0]
\filldraw (0,0) circle (1.6pt);
\draw[black,->,>=stealth,line width =.4mm] (0,0) -- (1.76776695,5);
\draw[black,->,>=stealth,line width =.4mm] (0,0) -- (5,0);
\draw[black,-,>=stealth,dashed,line width =.3mm] (0.88388,2.5) -- (0.88388+1.319932658,2.5-0.46666667);
\draw[black,-,>=stealth,dashed,line width =.3mm] (2.5,0) -- (2.5,1.4);
\node at (4.5,4.4) {$T_\Sigma \Sigma T_\Sigma^\tra = I$};
\node at (-0.25,0) {$\0$};
\node at (5.6,0) {$T_\Sigma e_1$};
\node at (2,5.4) {$T_\Sigma e_2$};
\draw[rotate around={45:(3+0.88388,3)}] (3+0.88388,3) circle [x radius=0.7, y radius=0.7];
    \draw (1,0) arc (0:69:1);
    \node at (0.6,0.4) {$\varphi_0$};
    \draw[black,->,>=stealth,dotted,line width =.3mm] (2.5,0) -- (1.92,1.2);
    \draw[-] (2.5,1) arc (90:115:1);
      \draw[black,->,>=stealth,dotted,line width =.3mm] (0.88388,2.5) --(2.5,2.5);
  \draw (2.0,2.5) arc (0:-20:1.1);
      \node at (1.3,2.0) {$\varphi_2$};
          \node at (2.24,1.3) {$\varphi_1$};
                  \node at (1.6,1.3) {$v_1$};
                \node at (2.4,3.0) {$v_2$};
\end{tikzpicture}}
\caption{\label{fig:transform-angles} The 
quadrant (\emph{left}) and its image under the 
transformation $T_\Sigma$ (\emph{right}) as a wedge with angular span~$\varphi_0$ defined at~\eqref{eq:inangle}. 
Also depicted are the inwards-pointing normal vectors to the boundaries (dashed lines), the effective boundary drifts (dotted lines, $\bmu_1, \bmu_2$), their transformed counterparts ($v_1, v_2$),
and the angles of the latter relative to the appropriate normal vectors $(\varphi_1, \varphi_2)$;
the sign conventions are such that in the case indicated in the figure, $\varphi_1 >0$ and $\varphi_2 < 0$.
In this example $\sigma_1^2 =\sigma_2^2 =3$, $\kappa =-1$, $\rho = -1/3$, $\bmu_1 = (-1, 1)$, and $\bmu_2 = (2,1)$ (see \S\ref{sec:linear_transformation} for definitions of $\sigma_1, \sigma_2, \rho$ in terms of entries in $\Sigma$).
The ellipse on the left represents the correlation structure of $\Sigma$ (its shape is 
 the locus of $x \mapsto \Sigma x$ over $x \in \Sp$); the circle on the right corresponds to the identity matrix.
In the contrary case $\rho > 0$, the wedge angle $\varphi_0$ is obtuse.}
\end{center}
\end{figure}
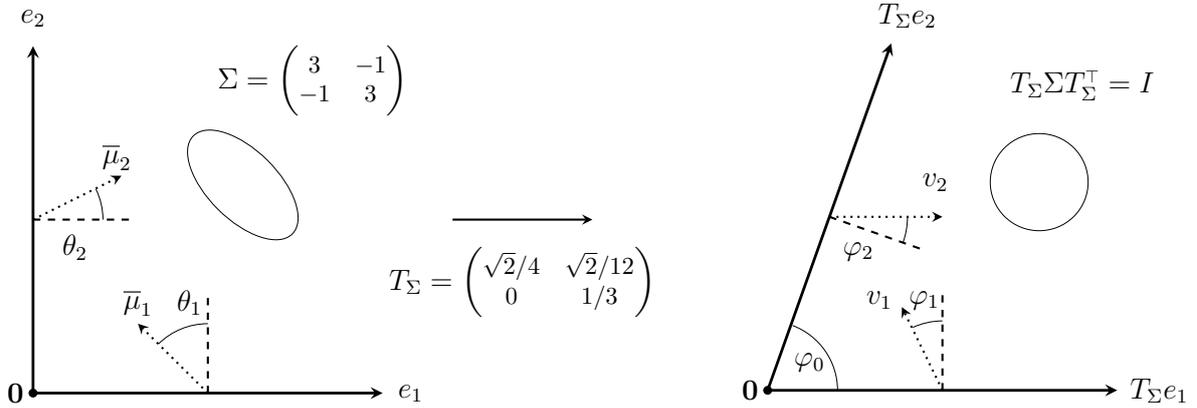 

\subsection{Organization of the rest of the paper}
\label{sec:organization}

The rest of the paper is organized around three main elements of the proof of Theorem~\ref{thm:main-theorem}.
\begin{itemize}
    \item We consider the transformation of the process under $\Ts$ introduced at~\eqref{eq:Ts-choices}, in order to obtain a process with canonical interior covariance, which is more convenient later. This results in transforming the quadrant into a wedge, and it is here that the angles that define~$\chi$ via~\eqref{eq:chi-def} are exhibited. We describe this in more detail, together with some more explicit formulas than those given in~\S\ref{sec:theorem}, in~\S\ref{sec:linear_transformation}.
    \item The contribution of the boundary behaviour to Theorem~\ref{thm:main-theorem} is via the reflection angles $\varphi_1, \varphi_2$ that enter into~$\chi$ at~\eqref{eq:chi-def}. These angles are defined in terms of $\Ts$ and $\bmu_1, \bmu_2$ from~\eqref{eq:mu-def}, and arise from a form of homogenization over the boundary: we describe and prove the stabilization result that captures this precisely in~\S\ref{sec:stabilization}.
    \item The bulk of the proof is then using Foster--Lyapunov results to deduce recurrence, transience, and bounds on tails of passage times. These arguments, concluding in the proof of Theorem~\ref{thm:main-theorem},  are presented in detail in~\S\ref{sec:proofs}.
\end{itemize}
After the proof of Theorem~\ref{thm:main-theorem} is concluded,
we present in \S\ref{sec:examples} the proof of Theorem~\ref{thm:lindley}
and explain in detail how the prior work of Menshikov \& Petritis~\cite{MenPet04}
fits as a special case of Theorem~\ref{thm:main-theorem} (see Remark~\ref{rem:moments}\ref{rem:moments-iv}).
Finally, in a short appendix we present the Foster--Lyapunov tools that we apply (\S\ref{sec:foster-lyapunov}).

Finally, before embarking on the proofs, 
we emphasize that we have chosen to present the arguments in detail, even when some technicalities are similar to those from the prototype work on passage-time moments in wedges in~\cite{AspIasMen96} (we give appropriate references at such points in the text). In part we do so for the convenience of keeping~\S\ref{sec:proofs}  self contained, but also because the exposition in~\cite{AspIasMen96} is in some places a little terse; the fact that the relevance of~\cite{AspIasMen96} 
to modern work on Lindley-type processes does not seem to have been easily appreciated, for example, suggests to us that there is value in an exposition that gives both details and intuition, while attempting to remain of a readable length.

\section{Covariance, angles, and linear transformation}
\label{sec:linear_transformation}

 This section puts some explicit geometrical formulae to the quantities
 needed to define the key parameter~$\chi$ from~\eqref{eq:chi-def}.
First, recall the definition of the interior covariance matrix $\Sigma$
from~\eqref{eq:sigma-def}. 
 Representing $Z_n = (X_n,Y_n)$ in Cartesian coordinates, write
\begin{equation}
\label{eq:sigma-entries}
  \Sigma =  
  \begin{pmatrix}
    \sigma_1^2 & \kappa\\
    \kappa & \sigma_2^2
  \end{pmatrix}.
\end{equation}
 That is, 
 for $z \in \Rqint$,
 $\sigma_1^2 = \Exp_z [ (X_{1} -X_0 )^2 ]$,
 $\sigma_2^2 = \Exp_z [ (Y_{1} -Y_0 )^2 ]$,
 and
 $\kappa = \Exp_z [ (X_{1} - X_0) (Y_{1} -Y_0 ) ]$.
Define also the \emph{correlation coefficient}
\begin{equation}
    \label{eq:rho-def}
    \rho := \rho (\Sigma) := \frac{\kappa}{\sqrt{\sigma_1^2 \sigma_2^2} }.
\end{equation}
 We choose $\Ts: \R^2\to \R^2$ to be the
linear transformation given by
\begin{equation}
  \label{eq:linearmap-explicit}
  \Ts: = \frac{1}{s \sigma_2}
  \begin{pmatrix}
    \sigma_2^2 & - \kappa\\
    0 & s
  \end{pmatrix}, \text{ where } s := \sqrt{ \det \Sigma } = \sqrt{\sigma_1^2 \sigma_2^2 - \kappa^2};
\end{equation}
note
that, by hypothesis~\aref{hyp:full_rank},   it is assumed
that $\sigma_1, \sigma_2$, and $s$ are all strictly positive,
and that $\rho$ defined at~\eqref{eq:rho-def} satisfies $\rho \in (-1,1)$.
Then $\Ts$ given by~\eqref{eq:linearmap-explicit} satisfies $\Ts \Sigma \Ts^\tra = I$
and~\eqref{eq:Ts-choices}, as we demanded in~\S\ref{sec:theorem}.
We write $\tRq := \Ts ( \Rq)$
for the image of the state space under the transformation~$\Ts$,
and similarly $\tRqint := \Ts ( \Rqint)$ and
$\tRq_k := \Ts ( \Rq_k)$.

The probabilistic significance of $\Ts$ at~\eqref{eq:linearmap-explicit} comes from the 
transformations~\eqref{eq:transformed-drift} and~\eqref{eq:transformed-covariance} 
of the interior drift and increment covariance.
From~\eqref{eq:linearmap-explicit}, 
the basis vectors $e_1, e_2$ transform via 
$\Ts(e_1) = e_1 \sigma_2/s$ and $\Ts(e_2) =(-\kappa,s)/(s\sigma_2)$ with $s>0$.
Hence the quadrant $\RP^2$ maps to the wedge $\cWs := \Ts(\RP^2)$ with angular span $\varphi_0 \in (0, \pi)$ given by
\begin{equation}
  \label{eq:inangle}
  \varphi_0 : = \arccos \left( - \frac{\kappa}{\sqrt{\kappa^2 + s^2}} \right) = \arccos (- \rho),
\end{equation}
where $\arccos$ is defined at~\eqref{eq:arccos} and $\rho$ is given by~\eqref{eq:rho-def}.

We define the directions of the transformed 
effective boundary drifts by
\begin{equation}
  \label{eq:t_drift}
  v_k := \frac{\Ts(\bmu_k)}{\| \Ts (\bmu_k) \|}, \text{ for } k  \in \{1,2\},
\end{equation}
where $\bmu_1, \bmu_2$ are defined in~\eqref{eq:mu-def}.
Denote the unit normal vectors at the  
boundaries of~$\Ts(\RP^2)$  by $n_1 := e_2$ (the normal at the horizontal boundary) and $n_2 := (\sin
\varphi_0, - \cos \varphi_0)$. Recall the definition of the rotation $R_{\varphi}$ from~\eqref{eq:rotation-def}. 
For $k \in \{1,2\}$, the 
vectors $v_k$ at~\eqref{eq:t_drift} define reflection angles $\varphi_k \in (-\pi/2, \pi/2)$ by the
  relations
\begin{equation}
  \label{eq:transformed-angles}
  v_1 =: R_{\varphi_1} n_1, \text{ and }   v_2 =: R_{-\varphi_2} n_2.
\end{equation}
In Cartesian coordinates,  
\begin{equation}
  \label{eq:boundary-drift-direction}
    v_1  =  (-\sin \varphi_1, \cos \varphi_1 ), \text{ and } 
    v_2 = ( \sin(\varphi_0 - \varphi_2), - \cos(\varphi_0 - \varphi_2)).
  \end{equation}
One can then obtain explicit 
formulae
for $\varphi_1$ and $\varphi_2$
in terms of the entries of~$\Sigma$ from~\eqref{eq:sigma-entries},
the angular span~$\varphi_0$ of the wedge from~\eqref{eq:inangle},
and angles $\theta_1, \theta_2\in (-\pi/2,\pi/2)$ such that
\begin{equation}
    \label{eq:theta-defs}
\bmu_1 = \|\bmu_1\| \big(-\sin \theta_1, \cos \theta_1  \big), \text{ and }
\bmu_2 = \|\bmu_2\| \big(\cos \theta_2, -\sin  \theta_2  \big).
\end{equation}
Then  expressions for  $\varphi_1$ and $\varphi_2$ can be given by
\begin{align}
  \label{eq:normal-angles-transformed-1}
    \varphi_1 &= \arctan \bigg(\frac{\sigma_2^2 \sin \theta_1 + \kappa \cos \theta_1}{s \cos \theta_1}\bigg),\\[.3cm]
      \label{eq:normal-angles-transformed-2}
    \varphi_2 &= \arctan\bigg( - \frac{(\sigma_2^2 \cos \theta_2 + \kappa \sin \theta_2) \cos \varphi_0 - s \sin \theta_2 \sin\varphi_0}{(\sigma_2^2 \cos \theta_2 + \kappa \sin \theta_2) \sin \varphi_0 + s \sin \theta_2 \cos\varphi_0}  \bigg),
\end{align} 
where
$\arctan: \R\to (-\pi/2,\pi/2)$ is defined by 
\begin{equation}
\label{eq:arctan}
    \arctan x := \int_0^x \frac{1}{1 + y^2} \ud y .
\end{equation}
The transformation of the quadrant and associated angles are depicted in Figure \ref{fig:transform-angles} above.
The following example is relevant for Theorem~\ref{thm:lindley}.

\begin{example}[Orthogonal reflection]\label{ex:orthogonal}
Suppose that $\bmu_1, \bmu_2$
given by~\eqref{eq:mu-def} are non-zero and satisfy
$\bmu_1 = \| \bmu_1 \| e_2$ and $\bmu_2 = \| \bmu_2 \| e_1$. Then $\theta_1=\theta_2=0$, i.e., the effective reflection at both boundaries is orthogonal. In this case the formulas~\eqref{eq:normal-angles-transformed-1} and~\eqref{eq:normal-angles-transformed-2} (or some simple linear geometry) show that $\varphi_1 = \varphi_2 = \varphi_0- \frac{\pi}{2}$, so~$\chi$ defined by~\eqref{eq:chi-def} is $\chi =2 - \frac{\pi}{\varphi_0}$. Then $\chi >0$ if and only if $\varphi_0 > \pi/2$, which, by~\eqref{eq:inangle}, is equivalent to $\rho >0$.
\end{example}

\section{Stabilization of boundary drifts}
\label{sec:stabilization}

The effective reflection vectors
$\bmu_1$ and $\bmu_2$ defined at~\eqref{eq:mu-def}
represent  drifts that the process accumulates when it is near the boundary.
The purpose of this section is to
present and prove Proposition~\ref{prop:stabil},
which formalizes this notion of \emph{stabilization} of boundary drifts.

For $k \in \{1,2\}$ and $z \in \Rq_k$, define the normalized $n$-step drift ($n \in \N$) of the process $Z$ started from $z$
by
\begin{equation}
  \label{eq:n-step-drift-quadrant}
  d^n_k (z)  := \frac{\Exp_z \bigl[ Z_{n} -z \bigr]}{ \Exp_z \bigl[\sum_{\ell
      =0}^{n-1} \Ind{Z_\ell \in \Rq_k} \bigr]}.
\end{equation}
We establish stabilization of $d^n_k (z)$
as $n \to \infty$, in the following sense.

\begin{proposition}[Stabilization]
\label{prop:stabil}
Suppose that~\aref{hyp:irreducible}, \aref{hyp:partial_homogeneity}, \aref{hyp:moments}, and \aref{hyp:zero_drift} hold. Fix $k \in \{1,2\}$, consider $\bmu_k$ as defined in~\eqref{eq:mu-def}, and for $z \in \Rq_k$ let $d^n_k (z)$ be as defined in~\eqref{eq:n-step-drift-quadrant}. Then for any $\veps>0$ there is $N \in \N$ such that 
for every $n >N$,
\begin{equation}
\label{eq:stabilization}
\| d_k^n (z) -\bmu_k \| < \veps, \text{ for all } z \in\Rq_k \text{ with } \| z \| > 2 R n.
\end{equation}
\end{proposition}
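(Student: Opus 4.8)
The plan is to decompose the $n$-step expected displacement $\Exp_z[Z_n - z]$ according to the sequence of excursions that $Z$ makes away from the boundary region $\Rq_k$ and into the interior $\Rqint$ (and the opposite boundary), and to show that only time spent actually in $\Rq_k$ contributes drift. Fix $k=1$ (the case $k=2$ is symmetric). The key observation is that, by the zero-drift hypothesis~\aref{hyp:zero_drift}, whenever $Z_\ell \in \Rqint$ the conditional increment $\Exp[Z_{\ell+1} - Z_\ell \mid \cF_\ell]$ vanishes, and whenever $Z_\ell \in \Rqby \cup \Rqc$ the increment is bounded (by~\aref{hyp:moments}) but — and this is where the constraint $\|z\| > 2Rn$ enters — the process started from such a $z$ simply cannot reach the opposite boundary or corner within $n$ steps, since the bounded-jump property~\eqref{eq:bounded-jumps}--\eqref{eq:bounded-jumps-12} forces $|X_\ell - x| \le R\ell \le Rn < \|z\|/2$, say, so the $X$-coordinate stays well clear of $\I{R}$. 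So in fact for $\|z\| > 2Rn$ we have, for all $\ell < n$, that $Z_\ell \in \Rqint \cup \Rqbx$ almost surely, and the problem reduces to analysing the pair $(X_\ell)$ staying large while $(Y_\ell)$ runs the $q_1$-chain of Proposition~\ref{prop:projection-chains}.

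With this reduction, write $\Exp_z[Z_n - z] = \sum_{\ell=0}^{n-1} \Exp_z[(Z_{\ell+1}-Z_\ell)\Ind{Z_\ell \in \Rqbx}] + \sum_{\ell=0}^{n-1} \Exp_z[(Z_{\ell+1}-Z_\ell)\Ind{Z_\ell \in \Rqint}]$, and the second sum is exactly $\0$ by~\eqref{eq:transformed-drift} (interior zero drift), conditioning on $\cF_\ell$. The first sum, using partial homogeneity~\aref{hyp:partial_homogeneity} and the definition~\eqref{eq:mu-function} of $\mu_1$, equals $\sum_{\ell=0}^{n-1}\sum_{i\in\I{R}} \mu_1(i)\, \Pr_z(Y_\ell = i,\, Z_\ell \in \Rqbx)$, which for $\|z\|>2Rn$ is $\sum_{i\in\I{R}}\mu_1(i)\, \Exp_z\bigl[\sum_{\ell=0}^{n-1}\Ind{Y_\ell = i}\bigr]$ since $X_\ell \ge R$ automatically. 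Dividing by the denominator $\Exp_z[\sum_{\ell=0}^{n-1}\Ind{Z_\ell\in\Rqbx}] = \sum_{i\in\I{R}} \Exp_z[\sum_{\ell=0}^{n-1}\Ind{Y_\ell = i}]$, the quantity $d_1^n(z)$ is precisely a convex combination $\sum_{i\in\I{R}} \mu_1(i)\, w_n(i,z)$ of the finitely many drift vectors $\mu_1(i)$, with weights $w_n(i,z) = \Exp_z[\sum_{\ell<n}\Ind{Y_\ell=i}] / \sum_{j} \Exp_z[\sum_{\ell<n}\Ind{Y_\ell=j}]$. So it remains to show these weights converge, uniformly over the relevant $z$, to $\pi_1(i)$.

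The convergence $w_n(i,z) \to \pi_1(i)$ is the analogue of a ratio limit theorem: by Proposition~\ref{prop:projection-chains}\ref{prop:projection-chains-iii}, for $\|z\|>2Rn$ the coordinate $Y$ is, up to time $n$, genuinely a Markov chain with transition kernel $q_1$ on $\ZP$ (the stopping time $\eta_1$ exceeds $n$ a.s.), and $q_1$ is irreducible with invariant measure $\pi_1$ supported — as an occupation-ratio statement — in the sense that the ratio of expected occupation times of states $i,j \in \I{R}$ converges to $\pi_1(i)/\pi_1(j)$. Here the main obstacle, and the reason the statement is framed with $\veps$ and an $N$ rather than an exact identity, is that $q_1$ need not be positive recurrent — indeed the whole point of the paper is that the interior is null (zero drift), so $Y$ under $q_1$ is typically a null-recurrent (or even transient) chain on $\ZP$. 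Thus one cannot invoke the ordinary ergodic theorem; one needs the ratio ergodic theorem for null-recurrent Markov chains (the Chacón--Ornstein / Orey ratio limit theorem), applied to the ratios of expected occupation times of the finite set $\I{R}$, together with a uniformity argument: the starting point $z$ affects only the initial coordinate $X_0 = x$ through $\|z\|>2Rn$, and the $Y$-marginal evolution does not depend on $x$ at all as long as $X_\ell \ge R$, which we have arranged. Hence the weights $w_n(i,z)$ do not in fact depend on $z$ at all in this regime, they depend only on $n$ and on the initial $Y$-value, and there are only finitely many ($<R$) such initial values to worry about, so the $\veps$-$N$ statement follows from the ratio theorem applied finitely many times — I expect this uniformity-over-initial-condition step, and the careful invocation of the null-recurrent ratio theorem (with attention to the case where $q_1$ is transient, where one argues instead that occupation times of $\I{R}$ are tight and the ratios still converge to the $\pi_1$-proportions via the minimal invariant measure), to be the technical heart of the argument.
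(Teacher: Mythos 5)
Your proposal follows essentially the same route as the paper: reduce (via the bounded-jump property and $\|z\| > 2Rn$) to the projected $q_1$-Markov chain on $\ZP$, write $d^n_1(z)$ as a ratio of expected occupation-time sums over $\I{R}$, and invoke a ratio-limit theorem for irreducible, not-necessarily-positive-recurrent chains; the paper packages this step as Lemma~\ref{lem:local-time-ergodic}, citing the Doeblin ratio limit theorem rather than Chac\'on--Ornstein, but the two are interchangeable here. One worry you raise, that $q_1$ might be transient, does not actually arise: by~\aref{hyp:zero_drift} the $q_1$-chain has exactly zero drift at every state $i \geq R$, with bounded second moment there by~\aref{hyp:moments}, so a Lamperti-type criterion gives recurrence, and indeed Proposition~\ref{prop:projection-chains}\ref{prop:projection-chains-ii} and Lemma~\ref{lem:local-time-ergodic} already quietly rely on recurrence via Derman's theorem; the ``minimal invariant measure'' workaround you sketch for the transient case is therefore unnecessary (and would in any case be delicate, since the ratio limit theorem genuinely fails for transient chains). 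Your explicit observation that uniformity over $z$ with $\|z\| > 2Rn$ reduces to uniformity over the finitely many initial values $y \in \I{R}$ is precisely the justification the paper leaves implicit.
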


The idea behind Proposition~\ref{prop:stabil} is that if $z \in \Rqbx$ (say) with $\| z \|$ large compared to~$n$,
then over $n$ steps, the jumps bounds~\eqref{eq:bounded-jumps} and \eqref{eq:bounded-jumps-12} ensure that $Z$ cannot reach $\Rq_2$ and the vertical component~$Y$ of $Z$ up to time~$n$ evolves as {an irreducible} Markov chain on $\ZP$ (see Proposition~\ref{prop:projection-chains}).
By~\aref{hyp:zero_drift} and~\aref{hyp:partial_homogeneity}, the numerator of~\eqref{eq:n-step-drift-quadrant} changes (by $\mu_1(Y)$) only when $Y\in \I{R}$, and so $d^n_k (z)$ is 
a ratio of expectations of an additive functional of~$Y$ and an occupation time. Proposition~\ref{prop:stabil} will then boil down to an application of the Doeblin ratio limit theorem,
which shows such a ratio is, for large~$n$, close to a corresponding
ratio in terms of the invariant measure~$\pi_1$ of $Y$. We introduce some preparatory notation.

Consider an irreducible, recurrent Markov chain $\xi = (\xi_0,\xi_1,\ldots)$ on $\ZP$.
Fix $R \in \ZP$.
By a result of Derman (quoted in e.g.~\cite[p.~47]{freedman}), there exists an invariant measure $\varpi$
(unique up to a scalar factor) for which $\varpi (y) > 0$ for all $y \in \ZP$.
Define
\begin{equation}
    \label{eq:xi-pi-def}
\pi (y) := \frac{\varpi(y)}{\varpi(\I{R})}, \text{ for all } y \in \I{R}.
\end{equation}
Then $\pi$ is well defined (since $\varpi$ is unique up to a constant factor) and is a probability measure on $\I{R}$ with $\min_{y \in \I{R}} \pi (y) >0$.
 Define \emph{occupation times} associated with $\xi$ by
\begin{equation}
\label{eq:def-local-time}
     \begin{aligned}
 L^\xi_n (y) := \sum_{\ell =0}^{n-1} \Ind { \xi_\ell = y}, 
 \text{ and } 
 L^\xi_n (\I{R} ) := \sum_{\ell =0}^{n-1} \Ind { \xi_\ell \in \I{R}} =\sum_{y \in \I{R}} L^\xi_n (y).
 \end{aligned}
 \end{equation}
The following ratio-ergodic lemma for additive functions of recurrent (but not necessarily positive recurrent) Markov chains is central to our stabilization property.
\begin{lemma}
 \label{lem:local-time-ergodic}
 Let $\xi$ be
  an irreducible, recurrent Markov chain   on $\ZP$, fix $R \in \ZP$,
  and define $\pi$ by~\eqref{eq:xi-pi-def} and $L_n^\xi$ by~\eqref{eq:def-local-time}.
 Take $f : \I{R} \to \R$.
Then, for every $y \in \ZP$,
\[  \lim_{n \to \infty} \frac{\Exp \bigl[ \sum_{\ell =0}^{n-1} f (\xi_\ell ) \Ind { \xi_\ell \in \I{R} }  \bigmid \xi_0 = y \bigr]}
{\Exp \bigl[ L^\xi_n (\I{R}) \bigmid \xi_0 = y \bigr]} = \sum_{x \in \I{R}} f (x) \pi (x) .
\]
\end{lemma}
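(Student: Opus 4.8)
The plan is to reduce the statement to the Doeblin ratio limit theorem for the numerator and denominator separately. First I would fix $y \in \ZP$ and write out both expectations in terms of the Green's function of $\xi$ restricted to $\I{R}$: setting $G_n(y,x) := \Exp[L^\xi_n(x) \mid \xi_0 = y] = \sum_{\ell=0}^{n-1} \Pr(\xi_\ell = x \mid \xi_0 = y)$, the numerator is $\sum_{x \in \I{R}} f(x) G_n(y,x)$ and the denominator is $\sum_{x \in \I{R}} G_n(y,x)$. Since $\I{R} = \{0,1,\dots,R-1\}$ is a \emph{finite} set, both sums have finitely many terms, so it suffices to understand the asymptotics of the ratios $G_n(y,x)/G_n(y,x')$ for $x, x' \in \I{R}$, or more conveniently $G_n(y,x) / \sum_{x'' \in \I{R}} G_n(y,x'')$.

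The key input is the Doeblin (Chung) ratio limit theorem: for an irreducible recurrent Markov chain on a countable state space, for all states $x, x', y, y'$,
\[
\lim_{n \to \infty} \frac{\sum_{\ell=0}^{n-1} \Pr(\xi_\ell = x \mid \xi_0 = y)}{\sum_{\ell=0}^{n-1} \Pr(\xi_\ell = x' \mid \xi_0 = y')} = \frac{\varpi(x)}{\varpi(x')},
\]
where $\varpi$ is the (essentially unique, everywhere-positive) invariant measure guaranteed by Derman's theorem. Applying this with $y' = y$ fixed and dividing numerator and denominator of the quantity in the lemma by, say, $G_n(y,0)$ (which is positive for all large $n$ by recurrence and irreducibility), each term $G_n(y,x)/G_n(y,0) \to \varpi(x)/\varpi(0)$ as $n \to \infty$. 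Because $\I{R}$ is finite, the termwise convergence passes through the finite sums without any interchange-of-limits subtlety, giving
\[
\lim_{n\to\infty} \frac{\sum_{x \in \I{R}} f(x) G_n(y,x)}{\sum_{x \in \I{R}} G_n(y,x)}
= \frac{\sum_{x \in \I{R}} f(x)\, \varpi(x)/\varpi(0)}{\sum_{x \in \I{R}} \varpi(x)/\varpi(0)}
= \frac{\sum_{x \in \I{R}} f(x)\, \varpi(x)}{\varpi(\I{R})} = \sum_{x \in \I{R}} f(x) \pi(x),
\]
using the definition \eqref{eq:xi-pi-def} of $\pi$. This is exactly the claimed limit.

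The only real subtlety — and hence the ``main obstacle'' — is ensuring the hypotheses of the ratio limit theorem are met: one needs $\xi$ irreducible and recurrent (given), and one must be slightly careful if $\xi$ is periodic, since the classical Doeblin/Chung ratio theorem for the quantities $\Pr(\xi_\ell = x)$ themselves can fail along subsequences in the periodic case, but the \emph{Cesàro-averaged} form above (sums over $\ell = 0,\dots,n-1$, which is what appears here) holds regardless of periodicity — this is the standard strengthening, see e.g.\ Chung's book or \cite{freedman}. One should also record the elementary fact, used implicitly, that $\sum_{x' \in \I{R}} G_n(y,x') \to \infty$ as $n \to \infty$ (again by recurrence: the chain visits the finite set $\I{R}$ infinitely often a.s., so the expected occupation time of $\I{R}$ diverges), which legitimizes dividing by it. No further estimates are needed; everything else is bookkeeping on a finite index set.
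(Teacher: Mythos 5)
Your proposal is correct and follows essentially the same route as the paper: rewrite the numerator as a finite sum over $\I{R}$ of occupation-time Green's functions, apply the Doeblin ratio limit theorem, and use finiteness of $\I{R}$ to pass the limit through the sum. The paper invokes the ratio theorem directly in the normalized form $\Exp[L^\xi_n(x)]/\Exp[L^\xi_n(\I{R})] \to \pi(x)$, whereas you derive the same from the classical two-state form by dividing through by $G_n(y,0)$, but this is only a presentational difference.
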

\begin{proof} Suppose that $\xi_0 = y \in \ZP$.
By Fubini's theorem, linearity, and~\eqref{eq:def-local-time}, we have that
\[
\Exp \biggl[\sum_{\ell=0}^{n-1} f (\xi_\ell) \Ind { \xi_\ell \in \I{R} }\biggmid \xi_0 = y\biggr]
= \sum_{x \in \I{R}} f(x) \Exp \bigl[L^\xi_n (x)\bigmid \xi_0 = y\bigr].
\]
By irreducibility, the Doeblin ratio limit theorem (see~\cite[p.~47]{freedman} or~\cite[p.~50]{chung}) says that, for $\pi$ the probability measure defined by~\eqref{eq:xi-pi-def}, 
 \[
     \lim_{n \to \infty} \frac{\Exp \bigl[ L^\xi_n(x) \bigmid \xi_0 = y \bigr]}{\Exp \bigl[ L^\xi_n(\I{R}) \bigmid \xi_0 = y \bigr] } = \pi (x) , \text{ for every } y \in \ZP. \qedhere
\]
\end{proof}

\begin{remark}
\label{rem:embedded-chain}
The measure $\pi$ as given by~\eqref{eq:xi-pi-def}
can be interpreted in terms of an embedded Markov chain.
An invariant measure $\varpi(y)$ for $\xi$ is furnished by 
the expected number of visits to $y$ between visits to $0$, i.e., for
$\tau := \inf \{ n \in \N : \xi_n = 0\}$,
\[
\varpi (y) = \Exp \biggl[ \sum_{n=0}^{\tau-1} \Ind { \xi_n = y } \biggmid \xi_0 = 0 \biggr].
\]
Define stopping times $\gamma_0 := 0$ and $\gamma_k := \inf \{ n > \gamma_{k-1} : \xi_n \in \I{R} \}$
for $k \in \N$. Then $\txi_k := \xi_{\gamma_k}$ defines a Markov chain on~$\I{R}$ (the \emph{embedded} chain). It is not hard to see that
$\txi$ inherits irreducibility from $\xi$, and so, since $\I{R}$ is finite, $\txi$
is positive recurrent and has a unique stationary distribution. 
Set $\tau' := \inf \{ k \in \N : \txi_k = 0 \}$.
An invariant measure for $\txi$
is given as a function of state $u$ by \[
\Exp  \biggl[  \sum_{\ell= 0}^{\tau'-1} \Ind { \txi_\ell = u } \biggmid \xi'_0 = 0 \biggr],
\]
but, by construction of $\txi$, this is the same as $\varpi (u)$. Thus
the unique stationary distribution for~$\txi$ is proportional to $\varpi$, and hence it must be~$\pi$ as given by~\eqref{eq:xi-pi-def}.    
\end{remark}

\begin{proof}[Proof of Proposition \ref{prop:stabil}]
We write the proof for the case $k=1$; the case $k=2$ is similar.
Let $Z$ be the
process with transition law $p$ as defined in~\eqref{eq:partial-homogeneity}, started from $Z_0 = z \in \Rqbx$.
Fix $n \in \N$, $n >2$.
Recall the definition of $\eta_1$ given in \eqref{eq:def-eta_k}. By~\eqref{eq:bounded-jumps}, we have $\Pr_z ( e_1^\tra Z_\ell > e_1^\tra z - \ell R ) =1$, for all $\ell \in \ZP$.
In particular, if $z \in \Rqbx$ has $\| z \| > (n+2) R$, then $e_1^\tra z > (n+1) R$, by the triangle inequality,
and hence $e_1^\tra Z_\ell > R$ for all $\ell \leq n$. Hence for all $n>1$, 
\begin{equation}\label{eq:away-from-boundary-n-steps}
\Pr_z ( \eta_1> n ) = 1, \text{ for all } z \in \Rqbx \text{ for which } \|z\| > 2n R .    
\end{equation}
Recalling the
definition of the drift $\mu_1$ from~\eqref{eq:mu-function}, we observe
that by~\aref{hyp:zero_drift}, for all $\ell < n$,
\[ 
\Exp_z  [   Z_{\ell+1} -Z_{\ell} \mid Z_\ell = (x,y) ] = \sum_{w \in \Z^2} w p_1 (y ; w) \Ind { y \in \I{R} } = \mu_1 (y) \Ind { y \in \I{R} }.
\]
By
Proposition~\ref{prop:projection-chains}
and~\eqref{eq:away-from-boundary-n-steps}, if $z \in \Rqbx$ and $\|  z \| > 2nR$, then $Y_\ell$, $\ell \leq n$, evolves as a Markov chain on $\ZP$ with transition law $q_1$ as defined in~\eqref{eq:q1-def}. 
Define  the associated occupation time $L_n^Y$ as  in~\eqref{eq:def-local-time}.
The
numerator of~\eqref{eq:n-step-drift-quadrant} can
be expressed as
\begin{align}
  \label{eq:numerator-drift-n-step}
    \sum_{\ell = 0}^{n-1}\Exp_z \big[ Z_{\ell+1} -Z_{\ell} \big] & =\sum_{(x,u) \in \ZP^2}\sum_{\ell = 0}^{n-1}   \Exp_z  [   Z_{\ell+1} -Z_{\ell} \mid Z_\ell = (x,u)  ]  \Pr_z [ Z_\ell = (x,u) ] \nonumber\\
      &=\sum_{u \in \I{R}} \mu_1 (u) \sum_{\ell = 0}^{n-1} \Pr_z [ Y_\ell = u ]   = 
      \sum_{u \in \I{R}} \mu_1(u) \Exp_z[ L^Y_n(u)].
  \end{align}
The denominator of~\eqref{eq:n-step-drift-quadrant} can
be expressed as
\begin{align}
  \label{eq:denominator-drift-n-step}
    \sum_{\ell=0}^{n-1}\Exp_z[\Ind{Z_\ell \in  \Rqbx}]
    &= \sum_{\ell=0}^{n-1}\Exp_z[\Ind{Y_\ell \in \I{R}}]\nonumber\\
    & = \sum_{u \in \I{R}}\sum_{\ell =0}^{n-1}\Exp_z [\Ind{Y_\ell =u}] = \Exp_z [L^Y_n(\I{R})].
  \end{align}
  In~\eqref{eq:numerator-drift-n-step}--\eqref{eq:denominator-drift-n-step},
  $Y$ can be viewed as an irreducible, recurrent Markov process on $\ZP$ with transition law $q_1$
 whose stationary distribution is $\pi_1$ (by Proposition~\ref{prop:projection-chains}).
Therefore, by~\eqref{eq:mu-def}, \eqref{eq:n-step-drift-quadrant},
we can apply Lemma~\ref{lem:local-time-ergodic} to conclude that, for every $\veps>0$ there is $N \in \N$  such that for all
$n \geq N$, and all $z=(x,y) \in \Rqbx$ with $\| z \| > 2Rn$, 
\[
  \left\| d_1(z,n) - \bmu_1 \right\|  = \left\| \sum_{u \in \I{R}}\mu_1(u) \bigg(\frac{ \Exp_z[
    L^Y_n(u)]}{\Exp_z [L^Y_n(\I{R})]}- \pi_1 (u)\bigg) \right\| \leq    \veps.
\]
Since $\veps>0$ was arbitrary, the proof is complete.
\end{proof}

We conclude this section with a proof of  Proposition~\ref{prop:projection-chains}.

\begin{proof}[Proof of Proposition~\ref{prop:projection-chains}.]
We first prove part~\ref{prop:projection-chains-iii}. Observe that, if $(x,i) \in \bI{R} \times \ZP$,
\begin{equation}
    \label{eq:partial-markov}
 \Pr ( Y_{1} = j \mid Z_0 = (x,i) ) = \sum_{x' \in \ZP} \Pr ( Z_{1} = (x', j) \mid Z_0 = (x,i) ) = q_1 (i, j), 
\end{equation}
by~\eqref{eq:q1-def}, and this verifies~\eqref{eq:q1-transition}. A similar argument yields~\eqref{eq:q2-transition}. This proves~\ref{prop:projection-chains-iii}. 

Moreover, 
if we define $q_1^{(1)} (i,j ) := q_1 (i,j)$ and, for $n \in \N$,
\[ q_1^{(n+1)} (i,j) := \sum_{k \in \ZP} q_1 (i, k) q_1^{(n)} (k, j) ,\]
we claim that, for all $n \in \N$ and all $i,j \in \ZP$,
\begin{equation}
    \label{eq:partial-CK}
    \Pr ( Y_n = j, \,  \eta_1 \geq n \mid Z_0 = (x,i) ) \leq q_1^{(n)} (i,j), \text{ for all } x \in \bI{R}. 
\end{equation}
The $n=1$ case of~\eqref{eq:partial-CK} 
is true (with equality) by~\eqref{eq:partial-markov};
an induction of Chapman--Kolmogorov type establishes the general case.
Indeed, from 
the Markov property of~$Z$,
\begin{align*}
& \Pr ( Y_{n+1} = j , \, \eta_1 \geq n+1 \mid Z_0 = (x,i ) ) \\
{} \qquad\qquad {} & = \sum_{x' \in \bb{Z}_+} \sum_{k \in \ZP}
\Exp [ \Pr ( Y_{n+1} = j \mid \cF_n ) \Ind{Z_n = (x',k)} \Ind{ \eta_1 \geq n } \mid Z_0 = (x,i) ] \\
{} \qquad\qquad {} & \leq \sum_{x' \in \bI{R}} \sum_{k \in \ZP} q_1 (k, j) \Pr ( Z_n = (x',k), \,  \eta_1 \geq n \mid Z_0 = (x,i) ) ,
\end{align*}
using~\eqref{eq:q1-transition}. Hence
\[ \Pr ( Y_{n+1} = j , \, \eta_1 \geq n+1 \mid Z_0 = (x,i ) ) 
\leq
\sum_{k \in \ZP} q_1 (k, j) \Pr ( Y_n = k, \,  \eta_1 \geq n \mid Z_0 = (x,i) ) ,
\]
and this provides the inductive step to verify~\eqref{eq:partial-CK}.

Fix $i, j \in \ZP$ and take $x, y \geq R$. From
the hypothesis~\aref{hyp:irreducible} on $p_1$, it follows that there exists $n = n(i,j) \in \N$ for which
\[ \Pr ( Y_n = j, \, \eta_1 \geq n \mid Z_0 = (x,i) ) \geq \Pr ( Z_n = (y,j), \, n < \tau_Z ( \Rqby \cup \Rqc ) \mid Z_0 = (x,i) ) > 0 .
\]
Combining this with~\eqref{eq:partial-CK} verifies that for every $i, j \in \ZP$ there exists $n \in \N$
for which $q_1^{(n)} (i,j) >0$. Hence $q_1$ defines an irreducible Markov chain on $\ZP$; a similar argument
applies to $q_2$. 
This proves part~\ref{prop:projection-chains-i}.
 
 Finally, to prove~\ref{prop:projection-chains-ii}, the existence of an invariant measure unique up to scaling is due to the result of Derman~\cite[p.~47]{freedman} that we already used earlier in this section.
\end{proof}

\section{Bounds on tails of passage times}
\label{sec:proofs}

\subsection{Overview of the proofs}
\label{sec:proof-overview}

To prove the results in~\S\ref{sec:theorem}, we study the  process $Z$ on $\Rq$
using Lyapunov functions $f : \RP^2 \to \RP$,
such that we can apply results for $\RP$-valued adapted processes satisfying suitable sub/super-martingale conditions. For these results to be most effective, one requires good control over the one-dimensional process $f(Z)$, and, in particular, its expected (conditional) increments. An effective~$f$, therefore, is one that homogenizes
the spatial heterogeneity of the process $Z$ (zero drift in $\Rqint$, reflection in $\Rqb$). 
One way to achieve this is to consider an \emph{harmonic} function $h : \RP^2 \to \RP$ 
applied to the transformed process $\tZ = \Ts(Z)$ (see~\S\ref{sec:theorem}),
where $\Ts$ is the linear transformation given in \eqref{eq:linearmap-explicit}.
Then $h\circ \Ts$
will be  harmonic for Brownian motion  with infinitesimal covariance $\Sigma$, and so $h\circ \Ts (Z)$ will be 
approximately a martingale for $Z$ in the interior $\Rqint$. 

If we can also arrange for $h$ to be approximately a martingale for $\tZ$ in the boundaries $\tRqbx$ and $\tRqby$ (see \S\ref{sec:linear_transformation}), we will have a process that is almost a martingale everywhere in the state space (where the error implicit in ``almost'' is reduced with distance from the origin). By considering powers $\alpha >1$ or $\alpha < 1$, we get Lyapunov functions $f = h^\alpha$ such that $f(\tZ)$ is a  sub/super-martingale, respectively, outside of a bounded set.

In the case of a simple boundary (i.e., $R=1$), one can achieve $h(\tZ)$ being almost a martingale 
in $\tRqint$
 by 
choosing $h$ so 
that the reflection vectors (i.e., one-step boundary drifts) 
are tangent to the level curves of~$h$.
For our complex boundary ($R>1$), there are, typically, many reflection vectors ($\mu_k(u)$ for each $u \in \I{R}$)
which means that this approach is impossible. However, if, rather than the one-step drifts, we take drifts
over {$N$}-steps for large enough~{$N$}, the mixing of the process in $\tRqb$ leads, via the stabilization
property described in \S\ref{sec:stabilization}, to   well-defined (approximate) effective drift vectors $\bmu_k$ for which the above strategy of matching level curves can be applied, with appropriate modifications.
Thus we work not with {$f$} applied directly to $\tZ$, but to a \emph{time-changed} version of $\tZ$ in which, in $\tRqb$, time is compressed: this is described in the next subsection. 

\subsection{Time compression at the boundary}
\label{sec:time_compression}

As described in \S\ref{sec:proof-overview}, 
to exploit stabilization (\S\ref{sec:stabilization}) we 
consider a time-changed process $\tZ^N = \big( \tZ^N_n, n \in \ZP \big)$ and 
will deduce properties of the passage times of $\tZ$ from knowledge of the passage times of $\tZ^N$.
The (stochastic) time-change is given by a compression factor $N \in \N$ and $T_N : \ZP \to \ZP$
defined as $T_N(0): = 0$ and for $n \in \N$,
 \begin{equation}\label{eq:time-embedd}
     T_N(n) : = \begin{cases}
    T_N(n-1) + 1 & \text{ if } Z_{T_N(n-1)}\in \Rqint,\\
    T_N(n-1) + N  & \text{ if } Z_{T_N(n-1)}\notin \Rqint.
\end{cases}
 \end{equation}
The process $\tZ^N$ is defined by setting 
 \begin{equation}\label{eq:time-change-process}
     \tZ^N_n : = \tZ_{T_N(n)} = \Ts( Z_{T_N(n)} ), \text{ for all } n \in \ZP.
 \end{equation}
From~\eqref{eq:time-embedd},  $T_N(n) - T_N (n-1) \in \{1,N\}$;
hence $T_N$ is a bounded time-change in the sense 
 \begin{equation}
     \label{eq:time-change-bound}
     n \leq T_N (n) \leq N n, \text{ a.s., for all } n \in \ZP.
 \end{equation}

The next result shows that tails of return times for $Z$ and $\tZ^N$ are comparable, up to constants (depending on $\Sigma$ and the time-change factor $N$).
Recall  that $\tau_Z (r) = 
\inf \{n \in \ZP : \|{Z}_n\| \leq r\}$ from~\eqref{eq:def-passage-time-ball}; analogously, for the   process~$\tZ^N$, define
\begin{equation}
    \label{eq:def-passage-time-ball-time-change}
    \ttau^N_Z (r) := \inf \{n \in \ZP : \|{\tZ}^N_n\| \leq r\}.
\end{equation}
 
\begin{lemma}
\label{lem:time-change}
    There exist constants $c, C$ (depending only on $\Sigma$) with $0 < c \leq C <\infty$  such that
    for every $z \in \Rq$, for all $N \in \N$, $a>0$, and all $n \in \ZP$,
 \begin{equation}
     \label{eq:time-tails-transfer}
       \Pr_z ( \ttau_Z^N (C(a+NR)) \geq n ) \leq \Pr_z ( \tau_Z (a) \geq n ) \leq \Pr_z ( \ttau_Z^N ( c a) \geq n/N)  .
        \end{equation}
\end{lemma}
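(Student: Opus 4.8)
The plan is to compare the two clocks directly via the bound~\eqref{eq:time-change-bound}, $n \le T_N(n) \le Nn$, together with the geometric fact that the linear map $\Ts$ distorts Euclidean norms by bounded factors. Write $\|\Ts\|_{\mathrm{op}}$ and $\|\Ts^{-1}\|_{\mathrm{op}}$ for the operator norms; since $\Ts$ depends only on $\Sigma$, these are constants depending only on $\Sigma$. So for every $w \in \R^2$ we have $\|\Ts^{-1}\|_{\mathrm{op}}^{-1}\,\|w\| \le \|\Ts w\| \le \|\Ts\|_{\mathrm{op}}\,\|w\|$; concretely this says $\|Z_m\|$ and $\|\tZ_m\| = \|\Ts Z_m\|$ differ only by bounded multiplicative constants. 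Set $c := \|\Ts^{-1}\|_{\mathrm{op}}^{-1}$ (or a convenient smaller value) and choose $C \ge \|\Ts\|_{\mathrm{op}}$ large enough to absorb the jump overshoot described below; one checks $0<c\le C<\infty$ and that they depend only on~$\Sigma$.

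First I would prove the right-hand inequality $\Pr_z(\tau_Z(a) \ge n) \le \Pr_z(\ttau_Z^N(ca) \ge n/N)$. Suppose $\ttau_Z^N(ca) < n/N$; I must show $\tau_Z(a) < n$. By definition of $\ttau_Z^N$ at~\eqref{eq:def-passage-time-ball-time-change} there is some $m < n/N$ with $\|\tZ^N_m\| \le ca$, i.e.\ $\|\Ts Z_{T_N(m)}\| \le ca$. By the lower norm distortion bound, $\|Z_{T_N(m)}\| \le \|\Ts^{-1}\|_{\mathrm{op}}\, ca = a$ (with $c$ chosen so that $\|\Ts^{-1}\|_{\mathrm{op}}\,c \le 1$), so $Z$ is inside $B_a$ at time $T_N(m)$. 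Since $m < n/N$ and $T_N(m) \le Nm < n$ by~\eqref{eq:time-change-bound}, this gives $\tau_Z(a) \le T_N(m) < n$, as required; taking complements yields the inequality.

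Next the left-hand inequality $\Pr_z(\ttau_Z^N(C(a+NR)) \ge n) \le \Pr_z(\tau_Z(a) \ge n)$. Here I argue the contrapositive at the level of events: if $\tau_Z(a) < n$ I must produce a time $m < n$ with $\|\tZ^N_m\| \le C(a+NR)$. Let $k := \tau_Z(a) < n$, so $\|Z_k\| \le a$. The subtle point — and the main obstacle — is that $k$ need not be in the range of $T_N$: the compressed clock may ``jump over'' the moment $Z$ enters $B_a$ when $Z$ is near the boundary (a block of $N$ steps is taken). So let $m := \inf\{ j \in \ZP : T_N(j) \ge k \}$; then $T_N(m-1) < k \le T_N(m)$, and since $T_N$ increments by at most $N$, we have $k \le T_N(m) \le k + N$. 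I then need to bound $\|Z_{T_N(m)}\|$: starting from $\|Z_k\| \le a$, over at most $N$ further steps the increments are bounded by the jump-size restriction~\eqref{eq:bounded-jumps}/\eqref{eq:bounded-jumps-12} — jumps towards the boundary are bounded by $R$ per step in each coordinate, but jumps \emph{away} from the boundary are not a priori bounded. However, the block of $N$ steps is taken only from a boundary or corner state ($Z_{T_N(m-1)} \notin \Rqint$), and one can instead track how far $Z$ can travel: within the single block straddling time $k$, either $k = T_N(m)$ exactly (nothing to prove beyond norm distortion), or $Z_{T_N(m-1)} \notin \Rqint$ and $T_N(m) = T_N(m-1) + N$ with $T_N(m-1) < k$, so $Z$ is at $\|Z_k\|\le a$ at an intermediate time $k$ within a block that began outside $\Rqint$. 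Using~\eqref{eq:bounded-jumps-12} the coordinates of $Z$ cannot decrease by more than $R$ per step, and since $Z_k \in B_a$ each coordinate of $Z_{T_N(m)}$ is at most $a + NR$ in absolute value on the side that matters, giving $\|Z_{T_N(m)}\| \le \sqrt{2}(a+NR)$; enlarging $C$ to also cover $\sqrt2\,\|\Ts\|_{\mathrm{op}}$ gives $\|\tZ^N_m\| = \|\Ts Z_{T_N(m)}\| \le C(a+NR)$. Finally $m \le T_N(m) \le k + N$; one absorbs the additive $N$ either by noting it is harmless for the stated inequality at the level of $\{\ttau^N_Z \ge n\}$ versus $\{\tau_Z \ge n\}$ after a further trivial adjustment, or more cleanly by observing $m < T_N(m) \le k+N$ and that the inequality is claimed for all $n$, so the edge cases near $n$ are handled by the monotonicity of $n \mapsto \Pr_z(\,\cdot \ge n)$ together with the definition. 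Taking complements and assembling the two halves gives~\eqref{eq:time-tails-transfer}. The one genuinely delicate bookkeeping step is the ``jump-over'' estimate controlling $\|Z_{T_N(m)}\|$ in terms of $\|Z_k\|$ and $NR$; everything else is the deterministic clock comparison~\eqref{eq:time-change-bound} and norm-equivalence under~$\Ts$.
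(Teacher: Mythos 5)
Your proof of the right-hand inequality is correct and matches the paper. But the left-hand inequality has a genuine directional error. You set $k := \tau_Z(a)$ and anchor to the \emph{next} compressed-clock time $T_N(m) \ge k$, then try to bound $\|Z_{T_N(m)}\|$ forward from $\|Z_k\| \le a$. The jump bound~\eqref{eq:bounded-jumps}--\eqref{eq:bounded-jumps-12} only constrains jumps \emph{toward} the boundary: each coordinate can decrease by at most $R$ per step, but it can increase arbitrarily (hypothesis~\aref{hyp:moments} gives only a finite $\nu$-th moment, not a uniform bound on outward jumps). So over the at most $N$ steps from $k$ to $T_N(m)$, the walk can jump far from the origin and your claim that each coordinate of $Z_{T_N(m)}$ is at most $a+NR$ is false — the ``cannot decrease by more than $R$ per step'' fact gives a \emph{lower} bound on $Z_{T_N(m)}$, not an upper one. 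You noticed the outward-jump problem but the ``block begins outside $\Rqint$'' observation does not repair it. A secondary defect of the same choice: $T_N(m) \le k+N$ only gives $m < n+N$, not $m < n$, so even were the spatial bound available you would have to shave $N$ off the index, and the monotonicity hand-wave does not close that gap.

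The fix is to anchor in the \emph{other} direction, as the paper does: take the largest $m$ with $T_N(m) \le k$, so that $k - N < T_N(m) \le k$. Then going forward from $T_N(m)$ to $k$ (at most $N$ steps), each coordinate can decrease by at most $NR$; since $\|Z_k\| \le a$ and coordinates are non-negative, this gives $Z_{T_N(m)}^{(i)} \le Z_k^{(i)} + NR \le a + NR$, hence $\|Z_{T_N(m)}\| \le \sqrt{2}(a + NR)$. Moreover $m \le T_N(m) \le k < n$ automatically, so $\ttau^N_Z(C(a+NR)) \le m < n$ for $C$ large enough depending only on $\Sigma$, and taking complements gives the inequality. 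Everything else in your write-up — the identification of $c,C$ with the extreme singular values of $\Ts$, and the use of~\eqref{eq:time-change-bound} — agrees with the paper.
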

\begin{proof}
 Let $c: = \inf\{\|\Ts v\| \colon \|v\| = 1\}$ and let $C: = \sup\{\|\Ts v\| \colon \|v\| = 1\}$;
 then, by~\aref{hyp:full_rank}, $0 < c \leq C < \infty$.
 If $\ttau_Z^N(c a) < n$, then $\|\tZ^N_m\| \leq c a$ for some $m < n$, and hence $\|Z_{T_N(m)} \| \leq a$ 
 for some $m < n$. Thus, by~\eqref{eq:time-change-bound},
$\|Z_m\| \leq a$ for some $m < Nn$. Thus
\[
    \text{ if }\ttau_Z^N( c a) < n  \text{ then } \tau_Z(a) < Nn, 
\]
which gives the right-hand inequality in~\eqref{eq:time-tails-transfer}.
Conversely,
suppose that $\tau_Z (a) < n$, so $\| Z_m \| \leq a$
for some $m < n$. By~\eqref{eq:time-embedd},
there exists $k$ such that $m-N \leq T_N (k) \leq m$;
by~\eqref{eq:time-change-bound}, $k \leq m < n$.
By~\eqref{eq:bounded-jumps}
and the triangle inequality, 
since $\| Z_m \| \leq a$ we must have $\| Z_{T_N (k)} \| \leq a + 2 N R$, say,
and thus $\| \tZ^N_k \| \leq C (a + 2 N R) $ for some $k < n$. Hence
\[
    \text{ if }\tau_Z(a) < n  \text{ then } \ttau_Z^N\big(C(a + 2 NR)\big) < n,\]
    which gives the left-hand inequality in~\eqref{eq:time-tails-transfer}.
\end{proof}

\subsection{Multidimensional Taylor's theorem}
\label{sec:taylor}

We collect notation for multivariate calculus and describe the version of Taylor's theorem that we will use to estimate increments of our Lyapunov functions.
Let $f : \R^2 \to \R$ have partial derivatives 
of up to 3rd order all continuous over $\R^2 \setminus \{ \0\}$. Let $D_i$ denote the derivative operator in (Cartesian) direction $e_i$. Write $\nabla f = (D_1 f, D_2 f)$
for the gradient of $f$, and
$H_f$ for the Hessian matrix of $f$, i.e., the matrix whose $i,j$ component is $D_i D_j f$.
For $\nu \in \{1,2\}^m$,
a string of length $|\nu| = m \in \N$, we use multi-index notation $D_\nu$ for 
the $m$th order mixed derivative
$D_{\nu_1} \cdots D_{\nu_m}$.
Taylor's theorem with remainder says 
\begin{align}
    \label{eq:taylor}
    f ( z + y ) - f(z) & = y^\tra \nabla f(z) + \frac{1}{2} y^\tra H_f (z) y + \fR_3^f (z,y) ,
\end{align}
for second-order approximation, 
and, for first-order approximation
\begin{align}
    \label{eq:taylor-first-order}
    f ( z + y ) - f(z) & = y^\tra \nabla f(z) +  \fR_2^f (z,y) ,
\end{align}
both~\eqref{eq:taylor} and~\eqref{eq:taylor-first-order} holding for all $z, y$ with $z \in \R^2 \setminus \{\0\}$
and $z+y \in \R^2 \setminus \{\0\}$;
for $k \in \{2,3\}$, the $k$th-order remainder term $\fR_k^f (z,y)$ satisfies
\begin{equation}
    \label{eq:taylor-remainder}
    \bigl| \fR_k^f (z, y) \bigr| \leq \| y \|^{k} \sup_{ \eta \in [0,1]} \sup_{|\nu| = k} \bigl| D_\nu f (z + \eta y ) \bigr| , \text{ for all } z, z+y \in \R^2 \setminus \{ \0\}.
\end{equation}
We also note the chain rule for the Hessian, which implies that, for $\alpha \in \R$ and for $z \in \{ f > 0 \} := \{ z \in \R^2 : f(z) > 0 \}$,
\begin{equation}
\label{eq:hessian-chain}
H_{f^\alpha} (z) = \alpha f^{\alpha-1} (z) H_f (z) + \alpha (\alpha-1) f^{\alpha-2} (z) ( \nabla f (z) ) (\nabla f (z) )^\tra .
\end{equation}

\subsection{Harmonic functions} 
\label{sec:harmonic}

As described in \S\ref{sec:proof-overview}, our proof
strategy is based around 
construction of Lyapunov functions for the (transformed and time-changed)
process, using an appropriate harmonic function. 
For the wedge or quadrant, there is a well-known parametric family of harmonic functions available for this purpose; these functions were used to study 
exit times from wedges by Burkholder~\cite{burkholder} and reflecting processes in wedges by Varadhan \& Williams~\cite{vw}, and have been used subsequently by many authors (e.g.~\cite{AspIasMen96,iain,vladas,aiSPA,aiTPA,dw,rosenkrantz}).
These functions are most conveniently expressed in polar coordinates, so we first introduce suitable notation for that; our approach is similar to e.g.~\cite[\S6.2]{AspIasMen96}, \cite[\S 3]{MenPet04}, and~\cite[\S3]{iain}.

We write $z = (r, \theta)$ in polar coordinates, with angles measured relative to the 
positive horizontal axis: $r := r(z) := \| z \|$ and $\theta := \theta (z) \in (-\pi,\pi]$
is the angle between $e_1$ and $z$,
i.e., in the notation at~\eqref{eq:angle-def}, $z = r R_\theta (e_1)$.
 The  Cartesian coordinates are $x = r \cos \theta$
and $y = r \sin \theta$, and the Cartesian derivatives of the polar coordinate functions are given by
\begin{equation}
    \label{eq:polar-derivatives}
D_1 r  = \cos \theta ; ~ D_2 r = \sin \theta ; ~ D_1 \theta = - \frac{\sin\theta}{r}; ~ D_2  \theta =  \frac{\cos\theta}{r}. \end{equation}

Recall that $\varphi_0$, as defined in \eqref{eq:inangle}, 
represents the angle at the apex of the wedge~$\cWs = \Ts (\RP^2)$. In our notation for polar coordinates,
\[ \cWs = \{ (r \cos \theta, r \sin \theta ) : r \in \RP, \, 0 \leq \theta \leq \varphi_0 \} .\]
It will be useful to consider a slightly bigger wedge; for this we use the notation
\begin{equation}
\label{eq:bigger-wedge}
    \cWs^\delta := \{ (r \cos \theta, r \sin \theta ) : r \in \RP, \, -\delta \leq \theta \leq \varphi_0+\delta \}. 
\end{equation} 

In equation~\eqref{eq:harmonic-beta} below, we will define the harmonic function $h$, in terms of two angle parameters $\beta_1,\beta_2 \in (-\pi/2,\pi/2)$, and the wedge angle $\varphi_0 \in (0,\pi)$ given by~\eqref{eq:inangle}. The parameters $\beta_1, \beta_2$ will
(in Proposition~\ref{prop:boundary-estimate} and Corollary~\ref{cor:boundary-signs} below) be chosen so that
$\beta_k \approx \varphi_k$, but with the sign of $\varphi_k - \beta_k$ chosen depending on sign of $\chi$, defined at~\eqref{eq:chi-def}, to ensure certain super/submartingale conditions for $h$ or powers $h^\alpha$ ($\alpha >0$) of $h$ applied to the process $\tZ^N$. See~\eqref{eq:pick_beta_k_1}--\eqref{eq:pick_beta_k_4} in the proof of Corollary~\ref{cor:boundary-signs} below for the specific choices of $\beta_k$ that will be made, and Figure~\ref{fig:gradient} below for a picture.

Given $\beta_1, \beta_2, \varphi_0$, set
\begin{equation}
    \label{eq:beta-def}
    \beta:= \frac{\beta_1+\beta_2}{\varphi_0};
\end{equation}
as above, choosing $\beta_k \approx \varphi_k$ will mean $\beta \approx \chi$ when we make specific choices later. Define, in polar coordinates, the function $h : \R^2 \to \R$ with parameters $\beta_1, \beta_2, \varphi_0$~by  
\begin{equation}
  \label{eq:harmonic-beta}
h (z) := h ( r, \theta ) := r^\beta \cos ( \beta \theta - \beta_1 ) .
  \end{equation}
  The function~$h$ is infinitely differentiable on $\R^2 \setminus \{ \0\}$,
  and   its partial derivatives of all orders are continuous on $\R^2 \setminus \{ \0\}$.
By elementary calculus using the derivatives~\eqref{eq:polar-derivatives},
\begin{align}
    \label{eq:h-derivatives}
    D_1 h(z) = \beta r^{\beta -1} \cos ( (\beta-1) \theta - \beta_1) , ~ D_2 h(z) = - \beta r^{\beta -1} \sin ( (\beta-1) \theta - \beta_1),
\end{align}
and
\begin{align}
    \label{eq:h-2nd-derivatives}
    D_1^2 h(z) = \beta (\beta-1) r^{\beta -2} \cos ( (\beta-2) \theta - \beta_1 ) = - D_2^2 h(z) ,
\end{align}
verifying that $D_1^2 h + D_2^2 h \equiv 0$, i.e., $h$ is harmonic. We also observe
that~\eqref{eq:h-derivatives} implies that $\| \nabla h(z) \|^2 = ( D_1 h(z) )^2 + ( D_2 h(z) )^2 = \beta^2 r^{2\beta-2}$,
from which we see that 
\begin{equation}
    \label{eq:norm-grad}
 \| \nabla h(z) \| = \beta \| z\|^{\beta-1}, \text{ for all } z \in \R^2 \setminus \{\0\}.   
\end{equation}
See Figure~\ref{fig:gradient} for an illustration of the relationship between the gradient $\nabla h$ and the angles $\beta_1, \beta_2$ that define $h$ through~\eqref{eq:beta-def}--\eqref{eq:harmonic-beta}. 

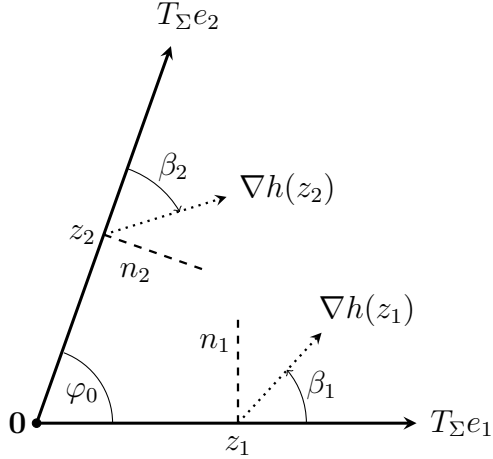
\begin{figure}[h!]
\begin{center}
\scalebox{1.0}{
\begin{tikzpicture}[domain=0:10, scale = 1.0]
\def \t {.15}
\filldraw (0,0) circle (1.6pt);
\draw[black,->,>=stealth,line width =.4mm] (0,0) -- (1.76776695,5);
\draw[black,->,>=stealth,line width =.4mm] (0,0) -- (5,0);
\draw[black,-,>=stealth,dashed,line width =.3mm] (0.88388,2.5) -- (0.88388+1.319932658,2.5-0.46666667);
\draw[black,-,>=stealth,dashed,line width =.3mm] (2.5 + \t,0) -- (2.5+\t,1.4);
\node at (-0.25,0) {$\0$};
\node at (5.6,0) {$T_\Sigma e_1$};
\node at (2,5.4) {$T_\Sigma e_2$};
    \draw (1,0) arc (0:69:1);
    \node at (0.6,0.4) {$\varphi_0$};
    \draw[black,->,>=stealth,dotted,line width =.3mm] (2.5 + \t,0) -- (3.6 + \t,1.2);
     \draw[black,->,>=stealth,dotted,line width =.3mm] (0.88388,2.5) --(2.5,3);
  \draw[<-] (1.88388,2.8) arc (30:70:1.3);
      \node at (1.3,2.0) {$n_2$};
            \node at (0.6,2.5) {$z_2$};
   \node at (3.6+\t,0.5) {$\beta_1$};
   \node at (1.8,3.4) {$\beta_2$};
   \draw[->] (3.4+\t,0) arc (0:40:1.1);
   \node at (4.2+\t,1.5) {$\nabla h(z_1)$};
 \node at (3.3,3.1) {$\nabla h(z_2)$};
  \node at (2.2 + \t,1.06) {$n_1$};
    \node at (2.5 + \t,-0.3) {$z_1$};
\end{tikzpicture}}
\end{center}
\caption{\label{fig:gradient}
The relationship between parameters $\beta_1, \beta_2$
and the gradient $\nabla h$
at $z_1 = (1,0)$ and $z_2 = (\cos \varphi_0, \sin \varphi_0)$.
The unit inwards-pointing normal vectors at the boundaries are $n_1 = (0,1)$ and $n_2 = (\sin \varphi_0, - \cos \varphi_0)$. The derivative formulas~\eqref{eq:h-derivatives} show that when $\theta =0$, $\nabla h$ is in the direction $(\cos \beta_1, \sin \beta_1)$, while when $\theta = \varphi_0$, 
$\nabla h$ is in the direction $(\cos (\varphi_0 - \beta_2), \sin (\varphi_0 - \beta_2))$.}
\end{figure} 
    
The following simple fact
shows that for $z \in \cWs$, it holds that $h (z) >0$ (unless $z=\0$) and $h(z)$ grows polynomially in $\| z \|$.

\begin{lemma}[Polynomial growth]
\label{lem:growth-factor}
Fix $\varphi_0 \in (0,\pi)$ and $\beta_1,\beta_2 \in (-\pi/2,\pi/2)$.
With $\beta$ given at~\eqref{eq:beta-def} and $h$ given by~\eqref{eq:harmonic-beta}, 
there exist $\delta >0$ and $\veps_0 >0$ for which 
\begin{equation}
\label{eq:growth-factor}
    \veps_0 \| z \|^\beta \leq h(z) \leq \|z\|^\beta , \text{ for all } z \in \cWs^\delta,
\end{equation}    
and for every $m \in \N$ there exists $A := A(m,\beta) < \infty$ such that, for every $\nu \in \{1,2\}^m$, 
\begin{equation}
\label{eq:growth-bound-derivatives}
  |  D_\nu h(z) | \leq A \|z\|^{\beta -m} , \text{ for all } z \in \cWs^\delta \setminus \{ \0\}.
\end{equation}    
\end{lemma}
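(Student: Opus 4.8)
\emph{Plan.} The statement splits into the two-sided polynomial bound~\eqref{eq:growth-factor} on $h$ and the derivative bound~\eqref{eq:growth-bound-derivatives}; both are elementary once one understands the affine map $\theta\mapsto\beta\theta-\beta_1$ and how the relevant trigonometric functions transform under $D_1,D_2$. For~\eqref{eq:growth-factor}, since $h(r,\theta)=r^\beta\cos(\beta\theta-\beta_1)$ with $r=\|z\|$, I would divide through by $\|z\|^\beta>0$ (treating $z\neq\0$; if $\beta\geq 0$ the case $z=\0$ is trivial) to reduce the claim to $\veps_0\leq\cos(\beta\theta-\beta_1)\leq 1$ on the relevant range of~$\theta$. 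The upper bound is immediate. For the lower bound, note that $\theta\mapsto\beta\theta-\beta_1$ is affine and, by~\eqref{eq:beta-def}, carries the endpoints of the angular interval $[0,\varphi_0]$ of $\cWs$ to $-\beta_1$ and to $\beta\varphi_0-\beta_1=\beta_2$; hence it maps $[0,\varphi_0]$ onto the closed segment between $-\beta_1$ and $\beta_2$, which, since $\beta_1,\beta_2\in(-\pi/2,\pi/2)$ and that interval is convex, is a compact subset of $(-\pi/2,\pi/2)$, where $\cos$ is continuous and strictly positive. Choosing $\delta>0$ small enough (depending only on $\beta_1,\beta_2,\varphi_0$) that the image of the slightly enlarged interval $[-\delta,\varphi_0+\delta]$ still lies in $(-\pi/2,\pi/2)$, I would set $\veps_0:=\min_{\theta\in[-\delta,\varphi_0+\delta]}\cos(\beta\theta-\beta_1)$, which is attained and strictly positive by compactness; this yields~\eqref{eq:growth-factor} on $\cWs^\delta$.

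For~\eqref{eq:growth-bound-derivatives}, the plan is an induction on $m=|\nu|$ showing that each $D_\nu h$ equals $r^{\beta-m}$ times $\pm\prod_{j=0}^{m-1}(\beta-j)$ times either $\cos((\beta-m)\theta-\beta_1)$ or $\sin((\beta-m)\theta-\beta_1)$, where the sign and the choice of $\cos$ versus $\sin$ depend on~$\nu$. The base case $m=0$ is the definition~\eqref{eq:harmonic-beta}. The inductive step is a one-line computation from the polar derivatives~\eqref{eq:polar-derivatives} together with angle-addition formulas: $D_1$ sends $r^\gamma\cos(\gamma\theta+c)\mapsto\gamma r^{\gamma-1}\cos((\gamma-1)\theta+c)$ and $r^\gamma\sin(\gamma\theta+c)\mapsto\gamma r^{\gamma-1}\sin((\gamma-1)\theta+c)$, while $D_2$ sends the former to $-\gamma r^{\gamma-1}\sin((\gamma-1)\theta+c)$ and the latter to $\gamma r^{\gamma-1}\cos((\gamma-1)\theta+c)$ (consistent with~\eqref{eq:h-derivatives} and~\eqref{eq:h-2nd-derivatives} for $m=1,2$). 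Iterating from $h$, taking absolute values, and bounding the trigonometric factor by~$1$, one gets $|D_\nu h(z)|\leq A(m,\beta)\|z\|^{\beta-m}$ with $A(m,\beta):=\prod_{j=0}^{m-1}|\beta-j|$, valid on all of $\R^2\setminus\{\0\}$ and in particular on $\cWs^\delta\setminus\{\0\}$ for the $\delta$ fixed above. (Equivalently, writing $h=\Re(e^{-\mathrm{i}\beta_1}\zeta^\beta)$ with $\zeta=x+\mathrm{i}y$ in a suitable branch gives $D_\nu h=\Re\bigl(\mathrm{i}^{a}e^{-\mathrm{i}\beta_1}\beta(\beta-1)\cdots(\beta-m+1)\zeta^{\beta-m}\bigr)$, with $a$ the number of entries of $\nu$ equal to $2$, from which the bound is read off at once.)

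There is no genuine obstacle here: the lemma is routine, and the only points requiring any care are identifying the right endpoint $\beta\varphi_0-\beta_1=\beta_2$ from~\eqref{eq:beta-def}, observing that the whole argument is insensitive to the sign of $\beta$ (which may be negative in the transient regime), and keeping the induction bookkeeping honest so that the product $\prod_{j=0}^{m-1}(\beta-j)$ and the shift of the trigonometric argument are tracked correctly.
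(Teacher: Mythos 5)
Your proof is correct and follows essentially the same route as the paper's: the upper bound in~\eqref{eq:growth-factor} from $|\cos|\leq 1$, the lower bound by showing the affine image of $[-\delta,\varphi_0+\delta]$ under $\theta\mapsto\beta\theta-\beta_1$ stays compactly inside $(-\pi/2,\pi/2)$ (using $\beta\varphi_0-\beta_1=\beta_2$), and the derivative bound by iterating the polar derivative formulas~\eqref{eq:polar-derivatives}. The only difference is cosmetic: the paper exhibits $\veps_0=\cos(\pi/2-\delta|\beta|)$ explicitly and merely asserts boundedness of the angular factor $A_\nu(\beta,\beta_1,z)$, whereas you obtain $\veps_0$ by compactness and pin down the constant $A(m,\beta)=\prod_{j=0}^{m-1}|\beta-j|$ via a clean induction (or, more slickly, the identity $h=\Re(e^{-\mathrm{i}\beta_1}\zeta^\beta)$), which is a slightly sharper and more transparent statement of the same argument.
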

  \begin{proof}
  Let $\beta_1,\beta_2 \in (-\pi/2, \pi/2)$ and $\varphi_0 \in (0,\pi)$ be fixed,
and define $\beta$ as at~\eqref{eq:beta-def}. 
The upper bound in~\eqref{eq:growth-factor} 
follows from \eqref{eq:harmonic-beta} and the fact that $|\cos \theta |\leq 1 $ for all $\theta \in \R$.
Choose $\delta >0$ with $2 \delta | \beta | <  \frac{\pi}{2} - \max ( |\beta_1|, |\beta_2| )
$. If $\beta \geq 0$, then 
\[ - \frac{\pi}{2} + \delta | \beta | < - \beta_1-\delta \beta \leq \beta \theta - \beta_1 \leq \beta_2+\delta \beta < \frac{\pi}{2} - \delta | \beta |  , \text{ for all } \theta \in [-\delta,\varphi_0+\delta]; \]
similarly, if $\beta <0$, then,
$| \beta \theta - \beta_1 | < \frac{\pi}{2} - \delta | \beta |$. Hence
\[ 
\inf_{ \theta \in [-\delta,\varphi_0+\delta]} \cos (\beta \theta - \beta_1 ) \geq \veps_0, \text{ where } \veps_0 :=  \cos \bigl( \tfrac{\pi}{2} - \delta |\beta| \bigr)  >0 .
\]
This completes the proof of~\eqref{eq:growth-factor}. From repeated differentiation and application of~\eqref{eq:polar-derivatives}, one sees that, for $| \nu | = m$, 
$D_\nu h(z) = r^{\beta-m} A_\nu (\beta, \beta_1, z)$ for a function $A_\nu (\beta, \beta_1, z)$ which satisfies $\sup_{\beta_1} \sup_{z \in \R^2} \| A_\nu (\beta, \beta_1, z) \| < \infty$; this yields~\eqref{eq:growth-bound-derivatives}.
\end{proof}

For certain applications of the optional stopping theorem, it is technically
convenient to introduce a   version $h_b$ of the function $h$, truncated at a fixed level $b \in \RP$, defined by
\begin{equation}
    \label{eq:h-truncated}
    h_b (z) := (2b)^\beta  \wedge h (z).
\end{equation}
Since $ h(z)  \leq \| z \|^\beta$,
$z \in \cWs^\delta$, we have that $h_b (z) = h(z)$ when $z \in \tRq$ with $\| z \| \leq 2b$.

\subsection{Estimates for Lyapunov function increments: interior}
\label{sec:lyapunov-interior}

The family of harmonic functions~$h$
described in \S\ref{sec:harmonic}
provides candidate Lyapunov functions $h^\alpha$, $\alpha >0$;
we estimate expectations of the functional increments 
$h^\alpha ( \tZ_{n+1}^N ) - h^\alpha ( \tZ_{n}^N )$
when $\tZ_n^N \in \tRqint = \Ts (\Rqint)$ (this section) and $\tZ_n^N \in \tRqb  = \Ts (\Rqbx \cup \Rqby) $ (\S\ref{sec:lyapunov-boundary}). (See~\S\ref{sec:linear_transformation} for definitions of $\Ts$, $\tRqint$, etc.)
As indicated in \S\ref{sec:proof-overview},
$h ( \tZ_{n}^N )$ is ``almost''
a martingale, because $h$ is harmonic and the increments of $\tZ^N_n$ are standardized.
The $\alpha =1$ case of the main result of this section, Proposition~\ref{prop:interior-estimate}, gives a concrete
estimate to this effect, but does not yield control over the \emph{sign} of the deviation
from the martingale property, i.e., the $o$ term in~\eqref{eq:conditional-increment-h-interior} below.
However, taking $\alpha <1$ or $\alpha >1$ furnishes a super- or submartingale, respectively, each with a quantitative estimate; this is the main content of Proposition~\ref{prop:interior-estimate} below.
See Lemma~6(a) of~\cite{AspIasMen96} and~Lemma~5.2 of~\cite{iain} for related results.

\begin{proposition}[Interior estimate]
\label{prop:interior-estimate}
Suppose that hypotheses~\aref{hyp:irreducible}, \aref{hyp:partial_homogeneity}, \aref{hyp:moments},
\aref{hyp:zero_drift}, and~\aref{hyp:full_rank} hold.
Let $N \in \N$, $\alpha >0$, $\beta_1,\beta_2 \in(-\pi/2,\pi/2)$, and define $\beta$ as at~\eqref{eq:beta-def}. Suppose that, for $\nuint >2$ the constant in~\aref{hyp:moments}, $2- \nuint < \alpha \beta < \nuint$. 
Then, for all $z \in \tRqint$ and all $n \in \ZP$,
\begin{equation}
  \label{eq:conditional-increment-h-interior}
\Exp\bigl[ h^\alpha(\tZ^N_{n+1}) - h^\alpha(\tZ^N_{n}) \bigmid \tZ^N_{n} = z \bigr] 
 =  
\frac{ \alpha(\alpha-1) \beta^2}{2} \|z\|^{2\beta-2}  h^{\alpha-2}(z) 
+  o(\|z\|^{\alpha\beta -2}),
\end{equation}
 as $\| z \| \to \infty$.
In particular, the following hold.
\begin{itemize}
\item 
If $\alpha >1$, there exist constants $c >0$
and $r \in \RP$ such that 
\begin{equation}
\label{eq:interior-sign-plus}
\Exp\bigl[ h^\alpha(\tZ^N_{n+1}) - h^\alpha(\tZ^N_{n}) \bigmid \tZ^N_{n} = z \bigr] 
\geq c  \|z\|^{\alpha\beta -2}, \text{ for all } z \in \tRqint \setminus B_r.
\end{equation}
\item 
If $\alpha \in (0,1)$,  there exist constants 
$c    >0$
and $r \in \RP$ such that 
\begin{equation}
\label{eq:interior-sign-minus}
\Exp\bigl[ h^\alpha(\tZ^N_{n+1}) - h^\alpha(\tZ^N_{n}) \bigmid \tZ^N_{n} = z \bigr] 
\leq - c  \|z\|^{\alpha\beta -2}, \text{ for all } z \in \tRqint \setminus B_r.
\end{equation}
\end{itemize}
\end{proposition}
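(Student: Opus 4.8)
The plan is to reduce the statement to a single one-step increment estimate and then expand via Taylor's theorem, separating small from large jumps. First observe that the time change is trivial in the interior: if $\tZ^N_n = z \in \tRqint$, then $Z_{T_N(n)} = \Ts^{-1} z \in \Rqint$, so $T_N(n+1)=T_N(n)+1$ by~\eqref{eq:time-embedd}, and by the Markov property and partial homogeneity~\aref{hyp:partial_homogeneity} the increment $\tZ^N_{n+1}-\tZ^N_n$ has the law of $\tilde\Delta := \Ts\Delta$ with $\Delta$ distributed as $\pint$ (in particular the conditional law does not depend on $n$, so uniformity in $n$ is automatic). By~\aref{hyp:zero_drift}, \eqref{eq:sigma-def}--\eqref{eq:linearmap-explicit}, and~\aref{hyp:moments}, we have $\Exp\tilde\Delta=\0$, $\Exp[\tilde\Delta\tilde\Delta^\tra] = \Ts\Sigma\Ts^\tra = I$, and $\Exp\|\tilde\Delta\|^{\nuint} < \infty$; moreover $\Delta$ has both coordinates bounded below by $-R$ by~\eqref{eq:bounded-jumps}. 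It therefore suffices to estimate $\Exp[h^\alpha(z+\tilde\Delta)] - h^\alpha(z)$ for $z\in\tRqint$ as $\|z\|\to\infty$. A key preliminary point: $z\in\cWs$, and since $Z_{T_N(n)+1}\in\ZP^2$ also $z+\tilde\Delta\in\Ts(\RP^2)=\cWs$; as $\cWs$ is a convex wedge (angle $\varphi_0\in(0,\pi)$), the whole segment $[z,z+\tilde\Delta]$ lies in $\cWs$, on which $h>0$ by Lemma~\ref{lem:growth-factor}, so $h^\alpha$ is smooth along the segment and the bounds $\veps_0\|\cdot\|^\beta\le h\le\|\cdot\|^\beta$ and $|D_\nu h| = O(\|\cdot\|^{\beta-m})$ of Lemma~\ref{lem:growth-factor} apply there (and, via the product rule, $|D_\nu h^\alpha| = O(\|\cdot\|^{\alpha\beta-m})$).

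I would then split according to whether $\|\tilde\Delta\|\le\tfrac12\|z\|$ or $\|\tilde\Delta\|>\tfrac12\|z\|$. On the small-jump event, apply the second-order expansion~\eqref{eq:taylor} to $f=h^\alpha$ along $[z,z+\tilde\Delta]$. The first-order term yields $(\Exp[\tilde\Delta\,\Ind{\|\tilde\Delta\|\le\|z\|/2}])^\tra\nabla h^\alpha(z)$; since $\Exp\tilde\Delta=\0$ one may replace the indicator by that of the complementary event, and a Markov-inequality bound using $\Exp\|\tilde\Delta\|^{\nuint}<\infty$ together with $\|\nabla h^\alpha(z)\|=O(\|z\|^{\alpha\beta-1})$ makes this $O(\|z\|^{\alpha\beta-\nuint}) = o(\|z\|^{\alpha\beta-2})$ since $\nuint>2$. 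The second-order term equals $\tfrac12\trace H_{h^\alpha}(z)$ up to a correction of the same order (again move the indicator and use $\|H_{h^\alpha}(z)\| = O(\|z\|^{\alpha\beta-2})$ and the second-moment bound), and by the Hessian chain rule~\eqref{eq:hessian-chain}, harmonicity of $h$ (so $\trace H_h\equiv0$ by~\eqref{eq:h-2nd-derivatives}), and $\|\nabla h(z)\|^2=\beta^2\|z\|^{2\beta-2}$ from~\eqref{eq:norm-grad}, one gets $\tfrac12\trace H_{h^\alpha}(z) = \tfrac{\alpha(\alpha-1)\beta^2}{2}\|z\|^{2\beta-2}h^{\alpha-2}(z)$, the claimed leading term. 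The remainder $\fR_3^{h^\alpha}(z,\tilde\Delta)$ is controlled on the small-jump event via~\eqref{eq:taylor-remainder} and the extension of~\eqref{eq:growth-bound-derivatives} to $h^\alpha$ (valid since the segment stays in $\cWs$ at distance $\gtrsim\|z\|$ from $\0$), giving $|\fR_3^{h^\alpha}(z,\tilde\Delta)| = O(\|z\|^{\alpha\beta-3}\|\tilde\Delta\|^3)$; taking expectations and using $\Exp[\|\tilde\Delta\|^3\Ind{\|\tilde\Delta\|\le\|z\|/2}] = O(\|z\|^{(3-\nuint)\vee 0})$ yields $o(\|z\|^{\alpha\beta-2})$.

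The main obstacle is the large-jump event $\{\|\tilde\Delta\|>\tfrac12\|z\|\}$, on which Taylor's theorem is unavailable and $\Exp[|h^\alpha(z+\tilde\Delta)-h^\alpha(z)|\,\Ind{\|\tilde\Delta\|>\|z\|/2}]$ must be bounded by hand using only the $\nuint$-th moment. Here I would use $0<h^\alpha(w)\le\|w\|^{\alpha\beta}$ on $\cWs$ (Lemma~\ref{lem:growth-factor}) together with the structural fact that, because interior jumps are bounded below componentwise by $-R$ (see~\eqref{eq:bounded-jumps}), one has $\|z+\tilde\Delta\|\ge c_1\|z\|$ for $\|z\|$ large---the same triangle-inequality argument used in the proof of Lemma~\ref{lem:time-change}, which also guarantees $z+\tilde\Delta$ never lands near $\0$. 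When $\alpha\beta\le0$ this gives $h^\alpha(z+\tilde\Delta)=O(\|z\|^{\alpha\beta})$, and the contribution is $O(\|z\|^{\alpha\beta})\Pr(\|\tilde\Delta\|>\|z\|/2) = O(\|z\|^{\alpha\beta-\nuint})$; when $\alpha\beta>0$ one uses $\|z+\tilde\Delta\|\le3\|\tilde\Delta\|$ on this event, so $h^\alpha(z+\tilde\Delta) = O(\|\tilde\Delta\|^{\alpha\beta})$, and the upper bound $\alpha\beta<\nuint$ from the hypothesis lets us bound $\Exp[\|\tilde\Delta\|^{\alpha\beta}\Ind{\|\tilde\Delta\|>\|z\|/2}] = O(\|z\|^{\alpha\beta-\nuint})$. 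In all cases the large-jump contribution (and the symmetric term $h^\alpha(z)\Pr(\|\tilde\Delta\|>\|z\|/2)$) is $O(\|z\|^{\alpha\beta-\nuint}) = o(\|z\|^{\alpha\beta-2})$; the two-sided condition $2-\nuint<\alpha\beta<\nuint$ is precisely what makes these polynomial orders valid and keeps $h^\alpha$ near $\0$ irrelevant. Collecting the small- and large-jump estimates proves~\eqref{eq:conditional-increment-h-interior}.

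Finally, for the sign statements one uses that $\beta\ne0$ (guaranteed by the choices of $\beta_1,\beta_2$ made later, where $\beta\approx\chi\ne0$), so that by Lemma~\ref{lem:growth-factor} $h^{\alpha-2}(z)\asymp\|z\|^{(\alpha-2)\beta}$ and the leading term in~\eqref{eq:conditional-increment-h-interior} is of exact order $\tfrac{\alpha(\alpha-1)\beta^2}{2}\|z\|^{\alpha\beta-2}$, with sign that of $\alpha-1$ (recall $\alpha>0$). This term dominates the $o(\|z\|^{\alpha\beta-2})$ error uniformly over $z\in\tRqint$ with $\|z\|$ large enough, which yields~\eqref{eq:interior-sign-plus} when $\alpha>1$ and~\eqref{eq:interior-sign-minus} when $\alpha\in(0,1)$, for a suitable $c>0$ and radius $r$.
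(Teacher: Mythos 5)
Your proposal is correct and follows essentially the same route as the paper: one-step reduction inside $\tRqint$, second-order Taylor expansion on a small-jump event, use of harmonicity and the Hessian chain rule~\eqref{eq:hessian-chain} together with $\Exp[\tilde\Delta\tilde\Delta^\tra]=I$ to isolate the leading term $\tfrac{\alpha(\alpha-1)\beta^2}{2}\|z\|^{2\beta-2}h^{\alpha-2}(z)$, and a separate moment estimate to kill the large-jump contribution. The implementation differs only cosmetically from the paper's: you split at the fixed proportion $\|\tilde\Delta\|\le\|z\|/2$ where the paper uses the power-law event $E_\delta(z)=\{\|\Delta\|\le(1+\|z\|)^{1-\delta}\}$ and invokes Lemma~\ref{lem:big-jump} with the adjustable $\delta$, and you compute $\tfrac12\trace H_{h^\alpha}(z)$ directly where the paper applies the chain rule first and treats the two pieces separately — both are the same algebra. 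One small point in your favour: for the large-jump bound when $\alpha\beta\le0$ you make explicit the structural observation (one coordinate of $\Ts^{-1}z$ stays large after one bounded-below jump, so $\|z+\tilde\Delta\|\gtrsim\|z\|$) that justifies $h^\alpha(z+\tilde\Delta)=O(\|z\|^{\alpha\beta})$; the paper's inequality~\eqref{eq:big-jump-interior} is resting on the same fact (via the discreteness of the state space, which keeps $\|z+\Delta\|$ bounded below) but does not spell it out. Both arguments are sound under the stated hypothesis $2-\nuint<\alpha\beta<\nuint$.
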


Let $\Delta : =\tZ_{1} - \tZ_{0}$. 
From~\eqref{eq:time-embedd}, we have that
$\tZ^N_{n+1} - \tZ^N_n = \tZ_{T_N(n)+1} - \tZ_{T_N(n)}$
whenever $\tZ^N_n \in \tRqint$.
Hence, for all $z \in \tRqint$ and all $n \in \ZP$,
\begin{equation}  
\label{eq:conditional-increment-f}
\Exp[ h^\alpha (\tZ^N_{n+1}) - h^\alpha (\tZ^N_{n}) \mid \tZ^N_{n} = z] =
    \Exp[h^\alpha (z + \Delta) - h^\alpha(z) \mid \tZ_0 = z].
\end{equation}
For $\delta \in (0,1)$ and $z \in \tRq$,
define the event
\begin{equation}
    \label{eq:E-delta}
 E_\delta (z) := \bigl\{ \| \Delta \| \leq (1 + \| z \|)^{1-\delta} \bigr\} .
 \end{equation}
On the event $E_\delta(z)$,
 control of the remainder terms of the form~\eqref{eq:taylor-remainder} 
 will enable us to 
estimate the expected increment in~\eqref{eq:conditional-increment-f} using 
the multivariate Taylor expansion
from \S\ref{sec:taylor}.
On the complementary event, $E_\delta^\rc (z)$, 
to bound the expected increment in~\eqref{eq:conditional-increment-f}
we will instead rely
on the moments assumption~\aref{hyp:moments} which shows that big jumps are unlikely. 
The following elementary technical lemma will be used in the latter case.

\begin{lemma}
    \label{lem:big-jump}
    Suppose that for some~$\nu >0$, $B < \infty$, and $z \in \tRq$, 
    the increment $\Delta$ satisfies
    $\Exp [ \| \Delta \|^\nu \mid \tZ_0 = z ] \leq B$.
    Then, for every~$\delta \in (0,1)$ and every~$\gamma \in [0, \nu]$,
    \[  \Exp \bigl[ \| \Delta \|^\gamma \Indno {E_\delta^\rc (z)} \bigmid \tZ_0 = z \bigr] \leq B  (1 + \| z \|)^{-(1-\delta)(\nu-\gamma)}, \text{ for all } z \in \tRq.
    \]
    In particular, if $\nu >2$ and $\delta \in (0, \frac{\nu-2}{\nu-1})$, then for $z \in \tRq$,
     as $\| z \| \to \infty$,
    \begin{equation}
        \label{eq:first-moment-big-jump}
     \Exp [ \| \Delta \| \Indno{E^\rc_\delta(z)}  \mid \tZ_0 = z ]  = o ( \| z \|^{-1} ),
     \text{ and } \Exp [ \| \Delta \|^2 \Indno{E^\rc_\delta(z)}  \mid \tZ_0 = z ]  = o (1 ) .\end{equation}
\end{lemma}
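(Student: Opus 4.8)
The plan is to bound the truncated moments via a straightforward application of Markov's inequality together with a ``peeling'' decomposition over dyadic (or polynomial) scales of $\|\Delta\|$. First I would observe that on the event $E_\delta^\rc(z)$ we have $\|\Delta\| > (1+\|z\|)^{1-\delta}$, so that $\|\Delta\|^\gamma = \|\Delta\|^\nu \cdot \|\Delta\|^{-(\nu-\gamma)} \leq \|\Delta\|^\nu (1+\|z\|)^{-(1-\delta)(\nu-\gamma)}$, using $\gamma \leq \nu$ so the exponent $\nu - \gamma$ is non-negative. Taking expectations and using the hypothesis $\Exp[\|\Delta\|^\nu \mid \tZ_0 = z] \leq B$ immediately yields
\[
\Exp \bigl[ \|\Delta\|^\gamma \Indno{E_\delta^\rc(z)} \bigmid \tZ_0 = z \bigr] \leq (1+\|z\|)^{-(1-\delta)(\nu-\gamma)} \Exp \bigl[ \|\Delta\|^\nu \bigmid \tZ_0 = z \bigr] \leq B (1+\|z\|)^{-(1-\delta)(\nu-\gamma)},
\]
which is exactly the claimed bound. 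No peeling is actually needed; the single inequality $\|\Delta\|^{-(\nu-\gamma)} \Indno{E_\delta^\rc(z)} \leq (1+\|z\|)^{-(1-\delta)(\nu-\gamma)}$ does the job, valid precisely because on $E_\delta^\rc(z)$ the increment exceeds the threshold and $\nu - \gamma \geq 0$.

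For the ``in particular'' statement I would specialize to $\gamma = 1$ and $\gamma = 2$. For $\gamma = 1$ the general bound gives $o$-control provided the exponent $(1-\delta)(\nu-1)$ strictly exceeds $1$, i.e. $\delta < \frac{\nu-2}{\nu-1}$, which is exactly the hypothesized range of $\delta$; under this condition $(1+\|z\|)^{-(1-\delta)(\nu-1)} = o(\|z\|^{-1})$ as $\|z\| \to \infty$, giving the first claim in~\eqref{eq:first-moment-big-jump}. For $\gamma = 2$ the bound gives $(1+\|z\|)^{-(1-\delta)(\nu-2)}$, and since $\nu > 2$ and $1-\delta > 0$ the exponent $(1-\delta)(\nu-2)$ is strictly positive, so this quantity is $o(1)$, establishing the second claim. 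One should check that $\gamma = 2 \leq \nu$ holds, which follows from $\nu > 2$; this is the only place the hypothesis $\gamma \in [0,\nu]$ is invoked non-trivially.

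There is really no substantive obstacle here — the lemma is a packaging of Markov's inequality plus the observation that truncating to large jumps converts spare moment integrability into a polynomial decay factor in $\|z\|$. The only point requiring minor care is the bookkeeping of exponents: ensuring the sign of $\nu - \gamma$ is correct (hence the hypothesis $\gamma \leq \nu$), and verifying that the specific choice $\delta \in (0, \frac{\nu-2}{\nu-1})$ is exactly what makes $(1-\delta)(\nu-1) > 1$. I would present the general inequality in one or two lines and then read off the two special cases, noting that the first uses the constraint on $\delta$ and the second uses only $\nu > 2$.
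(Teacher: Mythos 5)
Your proof is correct and follows exactly the paper's argument: the pointwise factorization $\|\Delta\|^\gamma = \|\Delta\|^\nu \cdot \|\Delta\|^{\gamma-\nu}$ combined with the lower bound $\|\Delta\| > (1+\|z\|)^{1-\delta}$ on $E_\delta^\rc(z)$ (and $\gamma - \nu \leq 0$), followed by taking conditional expectations and reading off the $\gamma = 1, 2$ cases. The preliminary mention of ``peeling'' and ``Markov's inequality'' is a slight red herring, as you yourself note --- no such machinery is used --- but the actual argument you carry out is the paper's.
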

\begin{proof}
    Write $\| \Delta \|^\gamma = \| \Delta \|^\nu \cdot \| \Delta \|^{\gamma - \nu}$,
    with $\gamma - \nu \leq 0$, to see that
    \[  \| \Delta \|^\gamma \Indno {E_\delta^\rc (z)} 
    \leq \| \Delta \|^\nu (1 + \| z \|)^{-(1-\delta)(\nu-\gamma)} ,
    \]
    and then take (conditional) expectations. The last part of the lemma is obtained {for} $\gamma =1$ by noting that when $\delta \in (0, \frac{\nu-2}{\nu-1})$ it holds that $(1-\delta)(\nu-\gamma)>1$, and for $\gamma =2$ it suffices to note that $(1-\delta)(\nu-\gamma)>0$ since $\delta <1$ and $\nu >2$.
\end{proof}

For a real-valued $2$-dimensional matrix $M$,  denote by
$\| M \|_{\text{op}} := \sup_{z \in \R^2 \setminus \{ \0 \}} \| M z \|/\|z\|$, the matrix (operator) norm induced by the Euclidean norm on $\R^2$.

\begin{proof}[Proof of Proposition~\ref{prop:interior-estimate}.]
As promised, we partition the expected increment in~\eqref{eq:conditional-increment-f}:
\begin{equation}
\begin{aligned}
\label{eq:conditional-increment-f-partition}
\Exp[ h^\alpha (\tZ^N_{n+1}) - h^\alpha (\tZ^N_{n}) \mid \tZ^N_{n} = z] 
& =
    \Exp \bigl[ \bigl( h^\alpha (z + \Delta) - h^\alpha(z) \bigr) \Indno{E_\delta (z)} \bigmid \tZ_0 = z \bigr] \\
    & {} \quad {} + \Exp \bigl[ \bigl( h^\alpha (z + \Delta) - h^\alpha(z) \bigr) \Indno{E_\delta^\rc (z)} \bigmid \tZ_0 = z \bigr],
\end{aligned}
\end{equation}
where we fix $\delta \in (0,\frac{\nuint-2}{\nuint-1})$ and  $E_\delta (z)$ is defined at~\eqref{eq:E-delta}.
To estimate the first term on the right-hand side of~\eqref{eq:conditional-increment-f-partition}, we 
take $f = h^\alpha$ in~\eqref{eq:taylor}
and use the Hessian chain rule at~\eqref{eq:hessian-chain} to obtain, for all $z \in \tRqint$
and all $y \in \R^2$ with $\| y \| < \| z \|$,
\begin{align}
\label{eq:taylor-for-h-alpha}
h^\alpha (z+ y) - h^\alpha (z) 
& = \alpha h^{\alpha -1}(z) y^\tra \nabla h(z)
+   \frac{\alpha }{2} h^{\alpha -1} (z) y^\tra H_h (z) y  \nonumber\\
& {} \quad {} +\frac{\alpha (\alpha-1)}{2}  h^{\alpha-2}(z) (y^\tra \nabla h(z) )^2 
 + \fR_3^{h^\alpha} (z, y).
\end{align}
By~\eqref{eq:taylor-remainder} together with the upper
bounds on $h$ and its derivatives in~\eqref{eq:growth-factor} and~\eqref{eq:growth-bound-derivatives},
the error term $\fR_3^{h^\alpha}$ in~\eqref{eq:taylor-for-h-alpha} satisfies the bound
\begin{equation}
    \label{eq:taylor-error}
\bigl| \fR_3^{h^\alpha} (z,y) \bigr| \leq C_\delta \| z \|^{\alpha \beta -3} \cdot \| y \|^3 
, 
\end{equation} 
for some constant $C_\delta < \infty$ and all $z, z+y \in \tRqint$.

We will apply~\eqref{eq:taylor-for-h-alpha} with 
$y = \Delta$, on the event $E_\delta (z)$, and then take (conditional) expectations. 
First note that, by~\eqref{eq:E-delta} and~\eqref{eq:taylor-error}, 
\begin{align}
\label{eq:error-term-estimate}
 \Exp \Bigl[\bigl| \fR_3^{h^\alpha} (z,\Delta) \bigr| \Indno{E_\delta(z)}
\Bigmid \tZ_0 = z \Bigr] 
& \leq C_\delta (1+\| z \|)^{\alpha \beta -2 -\delta}  \Exp [ \| \Delta \|^2 \Indno{E_\delta(z)}
\mid \tZ_0 = z ]  \nonumber\\
& = o ( \| z \|^{\alpha \beta -2} ),
  \text{ for } z \in \tRqint \text{ with } \|z\| \to \infty,
\end{align}
by the fact that~\aref{hyp:moments} holds with $\nuint > 2$.
Next, observe that, by linearity, 
\begin{align*}
  &\Exp\big[ \bigl( \Delta^\tra \nabla h (z) \bigr) \Indno{E_\delta(z)} \bigmid \tZ_0 = z  \big] 
    =( \nabla h(z))^\tra \Exp [ \Delta   \mid \tZ_0 = z ] - ( \nabla h(z))^\tra   \Exp [ \Delta \Indno{E^\rc_\delta(z)}  \mid \tZ_0 = z ].
\end{align*}
Hence, by~\aref{hyp:moments} and the $\gamma =1$, $\nu = \nuint >2$ case of~\eqref{eq:first-moment-big-jump},
together with~\eqref{eq:norm-grad} and~\eqref{eq:transformed-drift}, 
which follows from the zero-drift hypothesis~\aref{hyp:zero_drift}, we obtain 
\begin{equation}
  \label{eq:zero-drift-h}
  \bigl| \Exp\big[ \bigl( \Delta^\tra \nabla h (z) \bigr) \Indno{E_\delta(z)} \bigmid \tZ_0 = z  \big] \bigr| = o (\| z\|^{\beta-2}),
  \text{ for } z \in \tRqint \text{ with } \|z\| \to \infty.
\end{equation}
Moreover, by linearity 
of the trace 
and~\eqref{eq:transformed-covariance}, we obtain
\begin{align*}
\Exp \bigl[ \bigl( \Delta^\tra H_h (z) \Delta \bigr) \Indno{E_\delta(z)} \bigmid \tZ_0 = z \bigr]
&= \Exp \bigl[   \trace \bigl( \Delta \Delta^\tra \Indno{E_\delta(z)} H_h (z) \bigr) \bigmid \tZ_0 = z \bigr] \nonumber\\
& = \trace \bigl( \Exp \bigl[  \Delta \Delta^\tra  \Indno{E_\delta(z)} \bigmid \tZ_0 = z  \bigr] H_h (z) \bigr) \nonumber\\
&  = \trace \bigl( \Exp \bigl[  \Delta \Delta^\tra -  \Delta \Delta^\tra  \Indno{E^c_\delta(z)} \bigmid \tZ_0 = z  \bigr] H_h (z) \bigr) \nonumber\\
& = \trace H_h (z) - \trace \bigl( \Exp \bigl[  \Delta \Delta^\tra  \Indno{E^\rc_\delta(z)} \bigmid \tZ_0 = z  \bigr] H_h (z) \bigr).
\end{align*}
Here, since $h$ is harmonic (see \S\ref{sec:harmonic}), $\trace H_h (z) =0$, while,
by~\aref{hyp:moments}, the $\gamma = 2$, $\nu = \nuint >2$ case~\eqref{eq:first-moment-big-jump},
and~\eqref{eq:growth-bound-derivatives},
\[ 
\bigl| \trace \bigl( \Exp \bigl[  \Delta \Delta^\tra  \Indno{E^\rc_\delta(z)} \bigmid \tZ_0 = z  \bigr] H_h (z) \bigr) \bigr| \leq \| H_h (z) \|_{\text{op}} 
\Exp \bigl[ \| \Delta \|^2  \Indno{E^\rc_\delta(z)} \bigmid \tZ_0 = z  \bigr] = o ( \| z \|^{\beta -2} ),
\]
as $\| z \| \to \infty$. Hence we conclude that
\begin{align}
  \label{eq:trace-hessian}
\Exp \bigl[ \bigl( \Delta^\tra H_h (z) \Delta \bigr) \Indno{E_\delta(z)} \bigmid \tZ_0 = z \bigr]
& =o ( \| z \|^{\beta -2} ),  \text{ for } z \in \tRqint \text{ with } \|z\| \to \infty.
\end{align}
Similarly,
\begin{align*}
\Exp \bigl[ \bigl( \Delta^\tra \nabla h(z) \bigr)^2  \Indno{E_\delta(z)} \bigmid \tZ_0 = z \bigr]
& = \Exp \bigl[ \trace  \bigl( \Delta^\tra ( \nabla h(z) ) ( \nabla h(z) )^\tra \Delta  \Indno{E_\delta(z)}\bigr)  \bigmid \tZ_0 = z \bigr] \nonumber\\
& =   \trace \bigl( \Exp \bigl[  \Delta \Delta^\tra  (1-\Indno{E^\rc_\delta(z)}) \bigmid \tZ_0 = z  \bigr] ( \nabla h(z) ) ( \nabla h(z) )^\tra \bigr) , \end{align*}
while $ \trace \left( ( \nabla h(z) ) ( \nabla h(z) )^\tra\right)  = \| \nabla h(z) \|^2$, and,
by Lemma~\ref{lem:big-jump} and~\eqref{eq:h-derivatives},
\begin{align*}
\bigl| \trace \bigl( \Exp \bigl[  \Delta \Delta^\tra  \Indno{E^\rc_\delta(z)} \bigmid \tZ_0 = z  \bigr] ( \nabla h(z) ) ( \nabla h(z) )^\tra \bigr) \bigr| & \leq \| \nabla h(z) \|^2 \Exp \bigl[ \|  \Delta \|^2  \Indno{E^\rc_\delta(z)} \bigmid \tZ_0 = z  \bigr] \\
& = o ( \| z \|^{2\beta -2} ),  \text{ as } \|z\| \to \infty.
\end{align*}
Hence, using~\eqref{eq:transformed-covariance} and~\eqref{eq:norm-grad}, we conclude that
\begin{align}
  \label{eq:trace-grad}
\Exp \bigl[ \bigl( \Delta^\tra \nabla h(z) \bigr)^2  \Indno{E_\delta(z)} \bigmid \tZ_0 = z \bigr]
& = \bigl( \beta^2 +o(1) \bigr) \| z \|^{2\beta-2} ,    \text{ for } z \in \tRqint \text{ with } \|z\| \to \infty.
\end{align}
Using~\eqref{eq:error-term-estimate}, \eqref{eq:zero-drift-h}, \eqref{eq:trace-hessian},
and~\eqref{eq:trace-grad} in~\eqref{eq:taylor-for-h-alpha},
we obtain the interior Taylor expansion
\begin{align}
\label{eq:interior-terms}
 & \Exp \bigl[ \bigl( h^\alpha (z + \Delta) - h^\alpha(z) \bigr) \Indno{E_\delta (z)} \bigmid \tZ_0 = z \bigr] \nonumber\\
& {} \quad {} =  \frac{ \alpha(\alpha-1) \beta^2}{2} \|z\|^{2\beta-2}  h^{\alpha-2}(z) 
+  o(\|z\|^{\alpha\beta -2}),   \text{ for } z \in \tRqint \text{ with } \|z\| \to \infty.
\end{align}   

On the other hand, we consider the case when $E_\delta^\rc (z)$ occurs.
Write $x^+ := x \Ind{ x >0 }$ and $x^-:= - x\Ind { x < 0}$. 
We  
use the triangle inequality, the fact that $| h(z)| \leq \| z \|^\beta$ from~\eqref{eq:growth-factor}, and~\eqref{eq:E-delta}, to obtain the elementary bound
\begin{align}
\label{eq:big-jump-interior}
   \bigl| h^\alpha (z + \Delta) - h^\alpha(z) \bigr| \Indno{E^\rc_\delta (z)}
   & \leq \bigl( \| z + \Delta \|^{(\alpha \beta)^+}  +  \| z \|^{(\alpha \beta)^+} \bigr) \Indno{E^\rc_\delta (z)} \nonumber\\
   & \leq C_1  \| z \|^{(\alpha \beta)^+} \Indno{E^\rc_\delta (z)} + C_2 \| \Delta \|^{(\alpha \beta)^+} \Indno{E^\rc_\delta (z)} \nonumber\\
   & \leq C_3 \| \Delta \|^{(\alpha \beta)^+/(1-\delta)} \Indno{E^\rc_\delta (z)}, \text{ for all } z \in \tRq, 
\end{align}
where $C_1, C_2, C_3 < \infty$ depend only on $\alpha\beta$. 
If $(\alpha \beta)^+ < \nuint$ and
$\delta < 1 -\frac{(\alpha \beta)^+}{\nuint}$, we can apply the $\nu = \nuint >2$
case of Lemma~\ref{lem:big-jump}, with $\gamma = (\alpha \beta)^+/(1-\delta)$ to deduce from~\eqref{eq:big-jump-interior} that
\begin{align}
\label{eq:big-jump-bound-minus-2}
\Exp \bigl[ \bigl| h^\alpha (z + \Delta) - h^\alpha(z) \bigr| \Indno{E_\delta^\rc (z)} \bigmid \tZ_0 = z \bigr]
& \leq C \Exp \bigl[  \| \Delta \| ^{(\alpha \beta)^+/(1-\delta)} \Indno{E_\delta^\rc (z)} \bigmid \tZ_0 = z \bigr]\nonumber\\
& = O( \|z \|^{(\alpha\beta)^+-(1-\delta) \nuint} ).
\end{align}
 Provided that
$\delta < \frac{\nuint - 2 - (\alpha\beta)^-}{\nuint}$,
the bound in~\eqref{eq:big-jump-bound-minus-2}
is  $o(\|z\|^{\alpha\beta -2})$, and so
\begin{equation}
    \label{eq:h-big-jump}
    \bigl| \Exp \bigl[ \bigl( h^\alpha (z + \Delta) - h^\alpha(z) \bigr) \Indno{E_\delta^\rc (z)} \bigmid \tZ_0 = z \bigr] \bigr| = o(\|z\|^{\alpha\beta -2}),
    \text{ for } z \in \tRqint \text{ with } \|z\| \to \infty.
\end{equation}
For $\alpha \beta \geq 0$, the additional constraint on $\delta \in (0 , \frac{\nuint-2}{\nuint-1})$ is $\delta < \frac{\nuint - 2}{\nuint}$,
for which there is an admissible $\delta \in (0,1)$ whenever $\nuint >2$,
while if $\alpha \beta <0$, the constraint is $\delta < \frac{\nuint - 2 +\alpha \beta}{\nuint}$,
which requires $\alpha \beta > 2 -\nuint$. 
Thus whenever $2-\nuint < \alpha \beta < \nuint$, we can choose a suitable $\delta \in (0,1)$
for which both~\eqref{eq:interior-terms} and~\eqref{eq:h-big-jump} hold, and from here~\eqref{eq:conditional-increment-h-interior} follows.
Then~\eqref{eq:conditional-increment-h-interior} together with the lower
bound in Lemma~\ref{lem:growth-factor}
yields~\eqref{eq:interior-sign-plus} and~\eqref{eq:interior-sign-minus}.
\end{proof}

The following variant of Proposition~\ref{prop:interior-estimate}
applies to the truncated Lyapunov function defined using $h_b$ from~\eqref{eq:h-truncated}.

\begin{proposition}
\label{prop:interior-estimate-truncated}
Suppose that hypotheses~\aref{hyp:irreducible}, \aref{hyp:partial_homogeneity}, \aref{hyp:moments},
\aref{hyp:zero_drift}, and~\aref{hyp:full_rank} hold.
Let  $N \in \N$, $\alpha >0$, $\beta_1,\beta_2 \in(-\pi/2,\pi/2)$, and define $\beta$ as at~\eqref{eq:beta-def}. Suppose that, for $\nuint >2$ the constant in~\aref{hyp:moments}, $2 - \nuint  < \alpha \beta < \nuint$. 
Then  the following hold.
\begin{itemize}
\item 
If $\alpha >1$, there exist constants $c >0$
and $r \in \RP$ such that, for all $b \geq r$,
\begin{equation}
\label{eq:interior-sign-plus-truncated}
\Exp\bigl[ h_b^\alpha(\tZ^N_{n+1}) - h_b^\alpha(\tZ^N_{n}) \bigmid \tZ^N_{n} = z \bigr] 
\geq c  \|z\|^{\alpha\beta -2}, \text{ for all } z \in  \tRqint \cap ( B_b \setminus B_r ).
\end{equation}
\item 
If $\alpha \in (0,1)$,  there exist constants 
$c    >0$
and $r \in \RP$ such that, for all $b \geq r$,
\begin{equation}
\label{eq:interior-sign-minus-truncated}
\Exp\bigl[ h_b^\alpha(\tZ^N_{n+1}) - h_b^\alpha(\tZ^N_{n}) \bigmid \tZ^N_{n} = z \bigr] 
\leq - c  \|z\|^{\alpha\beta -2}, \text{ for all } z \in  \tRqint \cap ( B_b \setminus B_r ).
\end{equation}
\end{itemize}
\end{proposition}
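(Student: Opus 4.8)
The plan is to deduce Proposition~\ref{prop:interior-estimate-truncated} directly from Proposition~\ref{prop:interior-estimate}, by showing that for $z \in \tRqint$ with $\|z\| \le b$ the one-step conditional increment of $h_b^\alpha$ agrees with that of $h^\alpha$ up to a correction that is supported on a large-jump event, and is therefore $o(\|z\|^{\alpha\beta-2})$ \emph{uniformly in the truncation level~$b$}. First I would record the elementary facts that, on $\cWs^\delta$, $0 \le h_b \le h$ with equality whenever $\|w\| \le 2b$ (from~\eqref{eq:h-truncated} and~\eqref{eq:growth-factor}), so that $h^\alpha - h_b^\alpha \ge 0$ there and vanishes on $\{w:\|w\|\le 2b\}$. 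Writing $\Delta := \tZ_1 - \tZ_0$, recall from the discussion around~\eqref{eq:conditional-increment-f} that conditionally on $\{\tZ^N_n = z\}$ with $z\in\tRqint$ the variable $\tZ^N_{n+1}$ has the $\Pr_z$-law of $z+\Delta$, which moreover lies in $\tRq\subseteq\cWs$ a.s.; so for such $z$ with $\|z\|\le b$ we have $h_b^\alpha(z) = h^\alpha(z)$, and
\[
0 \le \bigl(h^\alpha - h_b^\alpha\bigr)(z+\Delta) \le h^\alpha(z+\Delta)\,\Indno{\{\|z+\Delta\|>2b\}} \le h^\alpha(z+\Delta)\,\Indno{\{\|\Delta\|>\|z\|\}},
\]
the last step using $\|z\|\le b$ and the triangle inequality. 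Fixing $\delta\in(0,1)$ as in the proof of Proposition~\ref{prop:interior-estimate}, the event $\{\|\Delta\|>\|z\|\}$ lies in $E_\delta^\rc(z)$ once $\|z\|$ is large (since $\|z\|\geq(1+\|z\|)^{1-\delta}$ eventually), so the correction is confined to $E_\delta^\rc(z)$.

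Next I would combine this with the interior estimate. For $z\in\tRqint$ with $\|z\|\le b$ and any $n$,
\[
\Exp\bigl[h_b^\alpha(\tZ^N_{n+1}) - h_b^\alpha(\tZ^N_n)\bigmid \tZ^N_n = z\bigr] = \Exp\bigl[h^\alpha(\tZ^N_{n+1}) - h^\alpha(\tZ^N_n)\bigmid \tZ^N_n = z\bigr] - \Exp_z\bigl[\bigl(h^\alpha - h_b^\alpha\bigr)(z+\Delta)\bigr].
\]
The first term on the right is exactly the object estimated in~\eqref{eq:conditional-increment-h-interior}, equal to $\tfrac{\alpha(\alpha-1)\beta^2}{2}\|z\|^{2\beta-2}h^{\alpha-2}(z) + o(\|z\|^{\alpha\beta-2})$ as $\|z\|\to\infty$, and crucially the $o(\cdot)$ there is a genuine function of $\|z\|$ while the estimate holds for every $n$, so its threshold depends neither on $n$ nor on $b$. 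By the previous paragraph and $h^\alpha\ge0$ on $\cWs$, the correction term lies in $[0,\Exp_z[h^\alpha(z+\Delta)\Indno{E_\delta^\rc(z)}]]$, and $\Exp_z[h^\alpha(z+\Delta)\Indno{E_\delta^\rc(z)}] = o(\|z\|^{\alpha\beta-2})$ as $\|z\|\to\infty$ by precisely the large-jump computation in the proof of Proposition~\ref{prop:interior-estimate} at~\eqref{eq:big-jump-interior}--\eqref{eq:h-big-jump} (which in fact bounds $\Exp_z[|h^\alpha(z+\Delta)|\Indno{E_\delta^\rc(z)}]$, together with a Markov-inequality bound on $h^\alpha(z)\Pr_z(E_\delta^\rc(z))$ using~\aref{hyp:moments}), again with a threshold depending only on $\alpha\beta$, $\nuint$, and the increment law.

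It then remains to read off the sign. By the two-sided bound $\veps_0\|z\|^\beta \le h(z)\le\|z\|^\beta$ from Lemma~\ref{lem:growth-factor}, the leading term $\tfrac{\alpha(\alpha-1)\beta^2}{2}\|z\|^{2\beta-2}h^{\alpha-2}(z)$ has sign $\sign(\alpha-1)$ and is comparable in absolute value to $\|z\|^{\alpha\beta-2}$, with constants depending only on $\alpha$, $\beta$, $\veps_0$. Hence there are $c>0$ and $r\in\RP$, depending only on $\alpha$, $\beta$, $\nuint$, and the increment law, and in particular not on $b$ or $n$, such that for $\|z\|\ge r$ the leading term dominates both $o(\|z\|^{\alpha\beta-2})$ contributions: subtracting a nonnegative correction of size $o(\|z\|^{\alpha\beta-2})$ only strengthens the inequality when $\alpha\in(0,1)$ and leaves it intact when $\alpha>1$. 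This yields~\eqref{eq:interior-sign-plus-truncated} when $\alpha>1$ and~\eqref{eq:interior-sign-minus-truncated} when $\alpha\in(0,1)$, for all $b\ge r$, all $z\in\tRqint\cap(B_b\setminus B_r)$, and all $n\in\ZP$.

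The main obstacle I anticipate is the uniformity in the truncation level $b$: one must be certain that none of the error thresholds above grow with $b$. This is handled precisely because the discrepancy between $h_b^\alpha$ and $h^\alpha$ is confined to the rare event $\{\|\Delta\|>\|z\|\}\subseteq E_\delta^\rc(z)$, whose contribution is controlled by the moment bound~\aref{hyp:moments} alone, and because~\eqref{eq:conditional-increment-h-interior} is an asymptotic in $\|z\|$ that holds uniformly over $n$. Everything else is a routine rerun of the computations in the proof of Proposition~\ref{prop:interior-estimate}.
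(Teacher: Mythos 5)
Your proof is correct and takes essentially the same approach as the paper's: both hinge on the observation that for $z$ with $\|z\| \le b$ the truncation can only act on the large-jump event $E_\delta^\rc(z)$, whose contribution is $o(\|z\|^{\alpha\beta-2})$ uniformly in~$b$ by the moment hypothesis~\aref{hyp:moments}. The only cosmetic difference is that the paper splits the $h_b^\alpha$-increment directly on $E_\delta(z)$ versus $E_\delta^\rc(z)$ (showing agreement with the $h^\alpha$-increment on the former and bounding the latter as in~\eqref{eq:big-jump-interior}--\eqref{eq:h-big-jump}), while you express the $h_b^\alpha$-increment as the $h^\alpha$-increment minus a nonnegative correction supported on $E_\delta^\rc(z)$ and invoke Proposition~\ref{prop:interior-estimate} directly; the extra Markov-inequality bound on $h^\alpha(z)\Pr_z(E_\delta^\rc(z))$ you flag is indeed needed in your variant and is harmless under the same constraint $\delta < (\nuint-2)/\nuint$ the paper imposes.
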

\begin{proof}
Take  $\delta \in (0,\frac{\nuint-2}{\nuint-1})$, and recall the definition of $E_\delta (z)$ from~\eqref{eq:E-delta}. Then, 
\begin{equation}\label{eq:increment_spli_adjust}
\begin{aligned}
\Exp[ h_b^\alpha (\tZ^N_{n+1}) - h_b^\alpha (\tZ^N_{n}) \mid \tZ^N_{n} = z] 
& =
\Exp \bigl[ \bigl( h_b^\alpha (z + \Delta) - h_b^\alpha(z) \bigr) \Indno{E_\delta (z)} \bigmid \tZ_0 = z \bigr]\\
& {} \quad {} + \Exp \bigl[ \bigl( h_b^\alpha (z + \Delta) - h_b^\alpha(z) \bigr) \Indno{E_\delta^\rc (z)} \bigmid \tZ_0 = z \bigr],
\end{aligned}
\end{equation}
similarly to~\eqref{eq:conditional-increment-f-partition}.
There exists $r_0$ (depending only on $\delta$)
such that whenever $z \in \tRqint \setminus B_{{r_0}}$,
on the event $E_\delta (z)$ we have $\| z + \Delta \| \leq 2 \| z \|$.
Hence, if $z \in \tRqint \cap ( B_b \setminus B_r )$ for $b \geq r \geq r_0$,
\begin{equation}\label{eq:no-change_small_increment}
 \Exp \bigl[ \bigl( h_b^\alpha (z + \Delta) - h_b^\alpha(z) \bigr) \Indno{E_\delta (z)} \bigmid \tZ_0 = z \bigr]
= \Exp \bigl[ \bigl( h^\alpha (z + \Delta) - h^\alpha(z) \bigr) \Indno{E_\delta (z)} \bigmid \tZ_0 = z \bigr],
\end{equation}
and then~\eqref{eq:interior-terms} applies;
the $o( \, \cdot \,)$ term on the right-hand side of~\eqref{eq:interior-terms} means that
we can take $r$ large enough so that the sign of the increment is controlled for all $z \in \tRqint \cap ( B_b \setminus B_r )$, independently of $b$.
On the other hand, when $E_\delta^\rc (z)$ occurs, similarly to~\eqref{eq:big-jump-interior},
\begin{equation}\label{eq:adjust_upper_bound_large_increment}
  \bigl| h_b^\alpha (z + \Delta) - h_b^\alpha(z) \bigr| \Indno{E^\rc_\delta (z)}
   \leq C    \| \Delta \|^{(\alpha \beta)^+/(1-\delta)} \Indno{E^\rc_\delta (z)},
\end{equation} 
where constant $C$ does not depend on $b$, and  the argument leading to~\eqref{eq:h-big-jump}
applies verbatim. The proof is concluded in the same way as the proof
of Proposition~\ref{prop:interior-estimate}. 
\end{proof}

\subsection{Estimates for Lyapunov function  increments: boundary}
\label{sec:lyapunov-boundary}

This section presents Lyapunov function estimates at the boundaries, to accompany those in the interior from the previous section;
here the time compression factor $N$ (see~S\ref{sec:time_compression}) 
is crucial, at least when $R > 1$.
In the case of a simple boundary ($R=1$),
Proposition~\ref{prop:boundary-estimate} is essentially
Lemma~6(b) in~\cite{AspIasMen96}.
Recall the definitions of the angles $\varphi_0$, $\varphi_1$, and $\varphi_2$ given by~\eqref{eq:inangle},  \eqref{eq:normal-angles-transformed-1}, and~\eqref{eq:normal-angles-transformed-2}, respectively.  

\begin{proposition}[Boundary estimate]
\label{prop:boundary-estimate}
Suppose that hypotheses~\aref{hyp:irreducible}, \aref{hyp:partial_homogeneity}, \aref{hyp:moments},
and~\aref{hyp:full_rank} hold.
Let $\alpha >0$, $\beta_1,\beta_2 \in(-\pi/2,\pi/2)$, and define $\beta$ as at~\eqref{eq:beta-def}. Suppose that, for $\nu > 1$ the constant in~\aref{hyp:moments}, $1 - \nu  < \alpha \beta < \nu$.
 Then, for each $k \in \{1,2\}$
and all $N \in \N$,
there exist $\lambda_k^N : \tRq_k \to (0,\infty)$ and $\veps^N_k : \tRq_k \to \R$ for which, 
\begin{align}
  \label{eq:boundary-increment}
 &  \Exp \bigl[ h^\alpha (\tZ^N_{n+1}) - h^\alpha (\tZ^N_n) \bigmid \tZ^N_n = z \bigr]  \nonumber\\
& {} \quad {}   = \alpha \beta \|z\|^{\beta-1} h^{\alpha-1} (z)  \lambda^N_k (z) \left[ \sin (\beta_k - \varphi_k) + \veps^N_k ( z) \right],  \text{ for all } z \in \tRq_k,
 \end{align}
 where (i)~for all $\veps >0$ there exist~$N \in \N$ and $r \in \RP$ such that $| \veps^N_k (z) | \leq \veps$
for all $\| z \| \geq r$; and (ii) for each $N \in \N$, $\inf_{z \in \tRq_k} \lambda^N_k (z) >0$
and $\sup_{z \in \tRq_k} \lambda^N_k (z) < \infty$.
\end{proposition}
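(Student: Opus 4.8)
The plan is to fix $k$ (I treat $k=1$; the case $k=2$ is symmetric), fix $z\in\tRq_1$, set $w:=\Ts^{-1}z\in\Rqbx$, reduce the one‑step increment of $\tZ^N$ at $z$ to the $N$‑step increment of $Z$ started from $w$, estimate the latter by a first‑order Taylor expansion of $h^\alpha$, and then identify the dominant (drift) term via the stabilization Proposition~\ref{prop:stabil}. We may assume $\beta\ne0$, the statement being trivial otherwise since then $h$ is constant.

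Since $z\notin\tRqint$, the time change~\eqref{eq:time-embedd} gives $T_N(n+1)=T_N(n)+N$ on $\{\tZ^N_n=z\}$, so by the Markov property $\Exp[h^\alpha(\tZ^N_{n+1})-h^\alpha(\tZ^N_n)\mid\tZ^N_n=z]=\Exp_w[h^\alpha(z+Y)-h^\alpha(z)]$ with $Y:=\Ts(Z_N-Z_0)$; by~\aref{hyp:moments} and Minkowski's inequality $\Exp_w[\|Y\|^\nu]\le B_N<\infty$, with $B_N$ depending only on $N$ and $\Sigma$. As in the proof of Proposition~\ref{prop:interior-estimate}, I split the expectation over $E_\delta(z)=\{\|Y\|\le(1+\|z\|)^{1-\delta}\}$ for a small $\delta>0$. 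On $E_\delta^\rc(z)$, the crude bound $|h^\alpha(z')|\le\|z'\|^{(\alpha\beta)^+}$ on $\cWs$ combined with Lemma~\ref{lem:big-jump} contributes $O(\|z\|^{(\alpha\beta)^+-(1-\delta)\nu})$, which is $o(\|z\|^{\alpha\beta-1})$ for $\delta$ small, exactly as in~\eqref{eq:big-jump-interior}--\eqref{eq:h-big-jump}; this is where the constraint $1-\nu<\alpha\beta<\nu$ is used. On $E_\delta(z)$ I apply the first‑order Taylor formula~\eqref{eq:taylor-first-order} to $f=h^\alpha$: by Lemma~\ref{lem:growth-factor} and the chain rule~\eqref{eq:hessian-chain} the remainder obeys $|\fR_2^{h^\alpha}(z,Y)|\Ind{E_\delta(z)}\le C\|z\|^{\alpha\beta-2}\|Y\|^2\Ind{E_\delta(z)}$, and since $\|Y\|^2\Ind{E_\delta(z)}\le\|Y\|^\nu(1+\|z\|)^{(1-\delta)\max(0,2-\nu)}$ its expectation is $o(\|z\|^{\alpha\beta-1})$ because $\nu>1$. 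The linear term is $\alpha h^{\alpha-1}(z)(\nabla h(z))^\tra\Exp_w[Y\Ind{E_\delta(z)}]$, and $\|\Exp_w[Y\Ind{E_\delta^\rc(z)}]\|\le\Exp_w[\|Y\|\Ind{E_\delta^\rc(z)}]=o(1)$ by Lemma~\ref{lem:big-jump}, so using $\|\nabla h(z)\|=\beta\|z\|^{\beta-1}$ from~\eqref{eq:norm-grad} we may replace $\Exp_w[Y\Ind{E_\delta(z)}]$ by $\Exp_w[Y]=\Ts\,\Exp_w[Z_N-Z_0]$ at the cost of $o(\|z\|^{\alpha\beta-1})$. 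Collecting terms,
\[
\Exp\bigl[h^\alpha(\tZ^N_{n+1})-h^\alpha(\tZ^N_n)\bigmid\tZ^N_n=z\bigr]=\alpha\,h^{\alpha-1}(z)\,(\nabla h(z))^\tra\,\Ts\,\Exp_w[Z_N-Z_0]+\mathcal{E}_N(z),
\]
where $\mathcal{E}_N(z)$ is finite for each $z\in\tRq_1$ and $\mathcal{E}_N(z)=o(\|z\|^{\alpha\beta-1})$ as $\|z\|\to\infty$ for each fixed $N$.

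Now stabilization enters. Put $m_1^N(w):=\Exp_w[\sum_{\ell=0}^{N-1}\Ind{Z_\ell\in\Rqbx}]$; since $Z_0=w\in\Rqbx$ we have $1\le m_1^N(w)\le N$, and by the definition~\eqref{eq:n-step-drift-quadrant} of $d_1^N$ we get $\Exp_w[Z_N-Z_0]=m_1^N(w)\,d_1^N(w)$. Define $\lambda_1^N(z):=\|\Ts\bmu_1\|\,m_1^N(\Ts^{-1}z)\in[\|\Ts\bmu_1\|,N\|\Ts\bmu_1\|]\subset(0,\infty)$ (recall $\bmu_1\ne\0$, as is implicit in the definition of $\varphi_1$); this gives part~(ii). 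By~\eqref{eq:h-derivatives} one has $\nabla h(z)=\beta\|z\|^{\beta-1}u(\theta(z))$ with $u(\theta):=(\cos((\beta-1)\theta-\beta_1),-\sin((\beta-1)\theta-\beta_1))$, so the display above reads exactly as~\eqref{eq:boundary-increment} once we set
\[
\veps_1^N(z):=\frac{u(\theta(z))^\tra\,\Ts\,d_1^N(\Ts^{-1}z)}{\|\Ts\bmu_1\|}-\sin(\beta_1-\varphi_1)+\frac{\mathcal{E}_N(z)}{\alpha\beta\,\|z\|^{\beta-1}\,h^{\alpha-1}(z)\,\lambda_1^N(z)},
\]
which is finite for every $z\in\tRq_1$. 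To verify part~(i), fix $\veps>0$. As $\|z\|\to\infty$ in $\tRq_1$ one has $\theta(z)\to0$ (indeed $|\theta(z)|=O(\|z\|^{-1})$, since $\tRq_1=\Ts(\Rqbx)$ stays within bounded angular distance of the positive $\Ts e_1$‑axis), so $u(\theta(z))\to u(0)=(\cos\beta_1,\sin\beta_1)$. By Proposition~\ref{prop:stabil} choose $N$ so that $\|d_1^N(w)-\bmu_1\|<\veps$ for all $\|w\|>2RN$; then for $\|z\|$ large enough that $\|\Ts^{-1}z\|>2RN$ we have $\Ts\,d_1^N(\Ts^{-1}z)/\|\Ts\bmu_1\|=v_1+O(\veps)$ by~\eqref{eq:t_drift}, while $(\cos\beta_1,\sin\beta_1)^\tra v_1=\sin(\beta_1-\varphi_1)$ by~\eqref{eq:boundary-drift-direction}. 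Finally $h^{\alpha-1}(z)\asymp\|z\|^{(\alpha-1)\beta}$ and $\lambda_1^N(z)\asymp1$ by Lemma~\ref{lem:growth-factor}, so the last term of $\veps_1^N(z)$ is $\mathcal{E}_N(z)$ divided by something of order $\|z\|^{\alpha\beta-1}$, hence $o(1)$. Combining the three contributions, $|\veps_1^N(z)|<\veps$ for all $\|z\|\ge r$ once $r$ is large enough, which is~(i).

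The main obstacle is coordinating two limiting regimes: Proposition~\ref{prop:stabil} forces the compression factor $N=N(\veps)$ to be large (with the accuracy depending on $N$), yet once $N$ is fixed the Taylor and big‑jump estimates, and the angular decay $\theta(z)\to0$, only take effect for $\|z\|$ large relative to $N$. The order of quantifiers in Proposition~\ref{prop:stabil} — a single $N$ valid for all later horizons and all far‑enough starting points — is precisely what makes this schedule legitimate, and this is exactly the role of time compression when $R>1$: it homogenises the finitely many one‑step boundary drifts $\mu_1(u)$, $u\in\I{R}$, into the single effective drift $\bmu_1$, whose direction $v_1$ the level curves of $h$ (governed by the angle $\beta_1$) can be set against. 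A secondary point needing care is that the drift term at a boundary point is of order $\|z\|^{\alpha\beta-1}$ rather than $\|z\|^{\alpha\beta-2}$ as in the interior, so only first moments of the increments are needed; this is why~\aref{hyp:moments} is invoked here with the weaker exponents $\nu_k>1$, and why first‑order (rather than second‑order) Taylor is the right tool.
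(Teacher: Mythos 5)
Your proof follows the same route as the paper's: reduce the one-step increment of the time-changed process to the $N$-step increment of $Z$ from $w = \Ts^{-1}z$, split over small and large jumps, apply first-order Taylor to $h^\alpha$ on the small-jump event, identify the drift term via $\Exp_w[Z_N - Z_0] = m_1^N(w)\,d_1^N(w)$, and pass to the limit $N\to\infty$, $\|z\|\to\infty$ using Proposition~\ref{prop:stabil} together with $\theta(z)\to 0$ and $u(0)^\tra v_1 = \sin(\beta_1-\varphi_1)$. The only genuine differences are cosmetic: you absorb the constant $\|\Ts\bmu_1\|$ into $\lambda_1^N$ (the paper leaves it implicit in the gradient identity), and your observation that $1\le m_1^N(w)\le N$ because $Z_0=w\in\Rqbx$ is a slightly cleaner route to part~(ii) than the paper's remark about $\lambda_1^N$ depending only on $e_2^\tra z$ for large $\|z\|$.

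One small slip: the inequality $\|Y\|^2\Ind{E_\delta(z)}\le\|Y\|^\nu(1+\|z\|)^{(1-\delta)\max(0,2-\nu)}$ fails when $\nu>2$ and $\|Y\|<1$ (you cannot bound $\|Y\|^2$ by $\|Y\|^\nu$ pointwise). It is harmless — when $\nu\ge 2$ one simply has $\Exp[\|Y\|^2\Ind{E_\delta(z)}]\le 1+B_N$, so the remainder is $O(\|z\|^{\alpha\beta-2})=o(\|z\|^{\alpha\beta-1})$; and when $1<\nu<2$ your inequality is correct and gives the required $o(\|z\|^{\alpha\beta-1})$ once $\delta$ is small. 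The paper's phrasing avoids this case-split by writing the remainder bound as $C\|z\|^{\alpha\beta-1-\delta}\|\Delta_N\|$ on $E_{N,\delta}$ and then taking first moments — worth adopting, since it uses only $\nu>1$ uniformly. Everything else, including the careful schedule of quantifiers (fix $\veps$, choose $N$ by stabilization, then push $r$), matches the paper exactly.
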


The following consequence of Proposition~\ref{prop:boundary-estimate}
furnishes, once again, quantitative sub- and supermartingale estimates.
Denote $\sign x := \Ind{x>0}-\Ind{x<0}$.

\begin{corollary}
\label{cor:boundary-signs}
Suppose that hypotheses~\aref{hyp:irreducible}, \aref{hyp:partial_homogeneity}, \aref{hyp:moments},
and~\aref{hyp:full_rank} hold. Given $\nu > 1$, the constant in~\aref{hyp:moments}, suppose that  $\chi$ defined by~\eqref{eq:chi-def} and $\alpha > 0$
satisfy $\chi \neq 0$ and 
$1 - \nu  < \alpha \chi < \nu$.
Choose $\veps \in (0, | \chi |)$ for which $\alpha \veps < \nu - \alpha \chi$
and $\alpha \veps < \nu - 1+ \alpha \chi$.
Then there exist $\beta_1, \beta_2 \in (-\pi/2,\pi/2)$, $N \in \N$, $r \in \RP$, and $c > 0$ such that
$\beta$ given by~\eqref{eq:beta-def} satisfies $\sign \beta = \sign \chi$, 
 $| \chi| < | \beta| < | \chi | + \veps$, 
and
    \begin{equation}
  \label{eq:boundary-control-plus}
   \Exp \bigl[ h^\alpha (\tZ^N_{n+1}) - h^\alpha (\tZ^N_n) \bigmid \tZ^N_n = z \bigr] 
   \geq c \| z \|^{\alpha\beta -1}, \text{ for all } z \in \tRqb  \setminus B_r.
   \end{equation}
   Moreover,
   there exist (different from above)   $\beta_1, \beta_2 \in (-\pi/2,\pi/2)$, $N \in \N$, $r \in \RP$, and $c > 0$ such that $\sign \beta = \sign \chi$, $| \chi | - \veps < | \beta | < | \chi |$, 
and
    \begin{equation}
\label{eq:boundary-control-minus}
   \Exp \bigl[ h^\alpha (\tZ^N_{n+1}) - h^\alpha (\tZ^N_n) \bigmid \tZ^N_n = z \bigr] 
   \leq -c \| z \|^{\alpha\beta -1}, \text{ for all } z \in \tRqb \setminus B_r.
   \end{equation}
\end{corollary}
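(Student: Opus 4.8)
The plan is to read off the two increment bounds \eqref{eq:boundary-control-plus} and \eqref{eq:boundary-control-minus} directly from the boundary estimate of Proposition~\ref{prop:boundary-estimate}, by choosing the angle parameters $\beta_1,\beta_2$ appropriately and then taking the time-compression factor~$N$ large. Recall from \eqref{eq:boundary-increment} that, for $z\in\tRq_k$,
\[
\Exp\bigl[ h^\alpha(\tZ^N_{n+1}) - h^\alpha(\tZ^N_n) \bigmid \tZ^N_n = z \bigr] = \alpha\beta\,\|z\|^{\beta-1}h^{\alpha-1}(z)\,\lambda^N_k(z)\bigl[\sin(\beta_k-\varphi_k)+\veps^N_k(z)\bigr],
\]
in which $\|z\|^{\beta-1}h^{\alpha-1}(z)>0$ and $\lambda^N_k(z)>0$, so the sign of the left-hand side equals that of $\alpha\beta\bigl[\sin(\beta_k-\varphi_k)+\veps^N_k(z)\bigr]$, and, by the lower bound in Lemma~\ref{lem:growth-factor}, $\|z\|^{\beta-1}h^{\alpha-1}(z)\ge c_0\|z\|^{\alpha\beta-1}$ for a constant $c_0>0$ and all $z$ in the wedge. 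The device is to set $\beta_k:=\varphi_k+\sigma\delta$ for a common small $\delta>0$ and a sign $\sigma\in\{-1,+1\}$: this makes $\sin(\beta_k-\varphi_k)=\sigma\sin\delta$ a fixed nonzero quantity, and, via \eqref{eq:chi-def} and \eqref{eq:beta-def}, gives $\beta=\chi+2\sigma\delta/\varphi_0$, so $|\beta|$ is pushed just above $|\chi|$ when $\sigma=\sign\chi$ and just below when $\sigma=-\sign\chi$, always with $\sign\beta=\sign\chi$ (for $\delta$ small, using $\veps<|\chi|$).

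For \eqref{eq:boundary-control-plus} I would take $\sigma=\sign\chi$, i.e.\ $\beta_k:=\varphi_k+\sign(\chi)\,\delta$ with $0<\delta<\min\bigl(\tfrac12\veps\varphi_0,\ \tfrac{\pi}{2}-|\varphi_1|,\ \tfrac{\pi}{2}-|\varphi_2|\bigr)$, possible since $\varphi_0>0$ and $\varphi_1,\varphi_2\in(-\pi/2,\pi/2)$; then $\beta_1,\beta_2\in(-\pi/2,\pi/2)$, $\sign\beta=\sign\chi$, and $|\chi|<|\beta|<|\chi|+\veps$. Since $\veps<|\chi|$ and $\alpha\veps<\min(\nu-\alpha\chi,\,\nu-1+\alpha\chi)$, any such $\beta$ satisfies $1-\nu<\alpha\beta<\nu$ (the first inequality on $\veps$ covers $\chi>0$, the second covers $\chi<0$), so Proposition~\ref{prop:boundary-estimate} applies with these $\beta_1,\beta_2$. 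Now $\alpha\beta\,\sin(\beta_k-\varphi_k)=\alpha\beta\,\sign(\chi)\sin\delta$ is strictly positive (its sign is $(\sign\chi)^2=1$) with modulus at least $\alpha|\chi|\sin\delta>0$; by part~(i) of Proposition~\ref{prop:boundary-estimate} choose $N$ and $r$ so that $|\veps^N_k(z)|\le\tfrac12\sin\delta$ for all $\|z\|\ge r$ and $k\in\{1,2\}$, whence the bracket stays positive and bounded below by $\tfrac12\sin\delta$. Combining with $\inf_z\lambda^N_k(z)>0$ and $\|z\|^{\beta-1}h^{\alpha-1}(z)\ge c_0\|z\|^{\alpha\beta-1}$ yields a constant $c>0$ with the increment $\ge c\|z\|^{\alpha\beta-1}$ for all $z\in\tRq_k\setminus B_r$; passing to the larger radius and smaller constant over $k\in\{1,2\}$ gives \eqref{eq:boundary-control-plus}.

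The supermartingale bound \eqref{eq:boundary-control-minus} is obtained in exactly the same way with $\sigma=-\sign\chi$, i.e.\ $\beta_k:=\varphi_k-\sign(\chi)\,\delta$ and $\delta$ in the same range: then $\sign\beta=\sign\chi$ and $|\chi|-\veps<|\beta|<|\chi|$, so again $1-\nu<\alpha\beta<\nu$. This time $\alpha\beta\,\sin(\beta_k-\varphi_k)=-\alpha\beta\,\sign(\chi)\sin\delta$ is strictly negative with modulus at least $\alpha|\chi|\sin\delta$, and the same choice of $N$ keeps the bracket negative and bounded away from zero, producing \eqref{eq:boundary-control-minus}.

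All the genuine analysis is contained in Proposition~\ref{prop:boundary-estimate}, so I expect no real obstacle here; the only two points requiring care are (a) checking that perturbing $\beta$ off $\chi$ keeps $\alpha\beta$ inside the admissible window $(1-\nu,\nu)$ of that proposition — which is precisely what the two inequalities imposed on $\veps$ guarantee, one handling $\chi>0$ and the other $\chi<0$ — and (b) tracking the sign of $\alpha\beta\,\sin(\beta_k-\varphi_k)$ through the four combinations (two bounds times two signs of $\chi$), which collapses cleanly because $\sign(\alpha\beta)=\sign\chi$ and $(\sign\chi)^2=1$.
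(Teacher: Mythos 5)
Your proof is correct and follows essentially the same route as the paper's: in both, one perturbs $\beta_k$ slightly above or below $\varphi_k$, uses Proposition~\ref{prop:boundary-estimate}(i) to control $\veps^N_k$, and checks that the perturbed $\beta$ keeps $\alpha\beta$ in the admissible window $(1-\nu,\nu)$. The only difference is cosmetic: the paper writes out four cases (two signs of $\chi$ times two bounds), while you collapse them with the sign parameter $\sigma=\pm\sign\chi$ and the observation that $\sign(\alpha\beta)\cdot\sign\bigl(\sin(\beta_k-\varphi_k)\bigr)=(\sign\chi)^2=1$; one small wording slip is that ``the bracket stays positive'' should really refer to the product $\alpha\beta\bigl[\sin(\beta_k-\varphi_k)+\veps^N_k(z)\bigr]$ rather than the bracket alone (which is negative when $\chi<0$), but the surrounding computation makes clear what is meant.
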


\begin{proof}[Proof of Proposition~\ref{prop:boundary-estimate}.]
For ease of notation, set 
$\Delta_N := \tZ^N_{1} - \tZ^N_0$ and $z' := \Ts^{-1} (z)$,
and note that~$\Delta_N = \Ts ( Z_N) - \Ts (Z_0)$ when $Z_0 \in \Rqb$ by~\eqref{eq:time-embedd}--\eqref{eq:time-change-process}.
Then, with the definition of~$d^N_k (z)$ from~\eqref{eq:n-step-drift-quadrant} and setting $\lambda_k^N (z) := \Exp_z \sum_{\ell=0}^{N-1} \Ind { Z_\ell \in \Rq_k}$,
we can write
\begin{equation}
    \label{eq:boundary-drift}
\Exp [ \Delta_N \mid \tZ^N_{0} = z ] = \Ts \Exp [ Z_{N} - Z_0 \mid \tZ_0 = z ]
= \lambda_k^N(z') \cdot \Ts\, d^N_k ( z' ) , \text{ for all } z \in\tRq_k.
\end{equation}
By hypothesis~\aref{hyp:moments},
there exist constants~$B < \infty$ and $\nu >1$ for which $\Exp [ \| \tZ_{n+1} - \tZ_n \|^\nu \mid \tZ_n =z ] \leq B$, for all $z \in \tRq$ and all $n \in \ZP$ and hence, by the simple inequality $( \sum_{i=1}^N |s_i| )^\nu \leq N^\nu \max_{1 \leq i \leq N} |s_i|^\nu \leq N^{\nu} \sum_{i=1}^N | s_i|^\nu$, we have
\begin{equation}
    \label{eq:Delta-N-moment-bound}
 \sup_{z \in \tRq}   \Exp [ \| \Delta_N \|^\nu \mid \tZ^N_0 = z ] \leq B_N ,
\end{equation}
for some constant $B_N < \infty$ that depends on $N$ and $\nu$.
Analogously to~\eqref{eq:E-delta}, define
\begin{equation}
    \label{eq:E-N-delta}
    E_{N,\delta} (z) := \bigl\{ \| \Delta_N \| \leq (1 + \| z \| )^{1-\delta} \bigr\}. \end{equation}
Fix $\delta \in (0,\frac{\nu-1-(\alpha\beta)^-}{\nu})$. For $z \in \tRqb$, we then have
\begin{align}
\label{eq:conditional-increment-f-partition-boundary}
\Exp[ h^\alpha (\tZ^N_{n+1}) - h^\alpha (\tZ^N_{n}) \mid \tZ^N_{n} = z] 
& =
    \Exp \bigl[ \bigl( h^\alpha (z + \Delta_N) - h^\alpha(z) \bigr) \Indno{E_{N,\delta} (z)} \bigmid \tZ_0 = z \bigr] \nonumber\\
    & {} \quad {} + \Exp \bigl[ \bigl( h^\alpha (z + \Delta_N) - h^\alpha(z) \bigr) \Indno{E_{N,\delta}^\rc (z)} \bigmid \tZ_0 = z \bigr].
\end{align}
The first term on the right-hand side of~\eqref{eq:conditional-increment-f-partition-boundary}
we will estimate using a Taylor expansion;
the stabilization property of the drift in~\eqref{eq:boundary-drift}
will allow us to fix $N$ (large). For the second term on the right-hand side of~\eqref{eq:conditional-increment-f-partition-boundary} we will use
Lemma~\ref{lem:big-jump}, which is applicable (for fixed $N$) because of the bound~\eqref{eq:Delta-N-moment-bound}.
Indeed, similarly to~\eqref{eq:big-jump-bound-minus-2}--\eqref{eq:h-big-jump},
 we obtain that
the second term on the right-hand side of~\eqref{eq:conditional-increment-f-partition-boundary} 
satisfies
\begin{equation}
    \label{eq:boundary-big-jump}
    \Exp \bigl[ \bigl| h^\alpha (z + \Delta_N) - h^\alpha(z) \bigr| \Indno{E_{N,\delta}^\rc (z)} \bigmid \tZ_0 = z \bigr] =o (\| z \|^{\alpha \beta -1} ), \text{ as } \| z \| \to \infty,
\end{equation}
provided that $1 - \nu < \alpha \beta < \nu$ and $\delta < \frac{\nu-1-(\alpha\beta)^-}{\nu}$.

We apply the first-order Taylor expansion~\eqref{eq:taylor-first-order}
with $f = h^\alpha$ and $y=\Delta_N$ to obtain
\begin{align}
\label{eq:first-order-taylor-h}
    &  \bigl(  h^\alpha (z + \Delta_N) - h^\alpha (z) \bigr) \Indno{ E_{N,\delta} (z) } \nonumber\\
    & {} \qquad {} = \alpha h^{\alpha-1} ( z ) \Delta_N^\tra \nabla h (z ) \Indno{ E_{N,\delta} (z) } + \fR^{h^\alpha}_2 (z, \Delta_N ) \Indno{ E_{N,\delta} (z) },
\end{align}
where, by~\eqref{eq:taylor-remainder} and the upper
bounds
on $h$ and its derivatives in~\eqref{eq:growth-factor} and~\eqref{eq:growth-bound-derivatives},
\[ 
\Exp \Bigl[  \bigl| \fR^{h^\alpha}_2 (z, \Delta_N) \bigr| \Indno{ E_{N,\delta} (z) } \Bigmid \tZ_0 = z \Bigr] \leq C_\delta \| z \|^{\alpha \beta -1 -\delta}\Exp [ \| \Delta_N \| \mid \tZ_0 = z ],
\]
where $ \Exp [ \| \Delta_N \| \mid \tZ_0 = z ]$ is uniformly bounded, since~\aref{hyp:moments} holds for $\nu >1$.
Hence we obtain, for the first term on the right-hand side of~\eqref{eq:conditional-increment-f-partition-boundary}, for all $z \in \tRq_k$,
\begin{align}
\label{eq:boundary-increment-main-term}
    &  \Exp \bigl[ \bigl( h^\alpha (z + \Delta_N) - h^\alpha(z) \bigr) \Indno{E_{N,\delta} (z)} \bigmid \tZ_0 = z \bigr]  \nonumber\\
     & {} \quad {} = \alpha h^{\alpha-1} ( z ) \Exp \bigl[ \bigl( \Delta_N^\tra \nabla h (z ) \bigr) \Indno{E_{N,\delta} (z)}  \bigmid \tZ_0 = z \bigr]      + o ( \| z\|^{\alpha \beta -1} ) ,
     \text{ as } \| z \| \to \infty. \end{align}
Moreover, from Lemma~\ref{lem:big-jump} and~\eqref{eq:h-derivatives}, we have that, as $\|z\| \to \infty$,
\[ \Exp \bigl[ \bigl| \Delta_N^\tra \nabla h (z ) \bigr| \Indno{E^\rc_{N,\delta} (z)}  \bigmid \tZ_0 = z \bigr] 
\leq \| \nabla h (z) \| \Exp \bigl[ \| \Delta_N \| \Indno{E^\rc_{N,\delta} (z)}  \bigmid \tZ_0 = z \bigr]
= o ( \| z \|^{\beta -1} ).
\]
Then 
using~\eqref{eq:conditional-increment-f-partition-boundary},
together with~\eqref{eq:boundary-big-jump}, \eqref{eq:boundary-increment-main-term}, and~\eqref{eq:boundary-drift}, we obtain, for all $z \in \tRq_k$,
     \begin{align}
    \label{eq:boundary-increment-main-term-2}
    &  \Exp \bigl[  h^\alpha (z + \Delta_N) - h^\alpha(z)   \bigmid \tZ_0 = z \bigr]  \nonumber\\      
& {} \quad {}  = \alpha \lambda_k^N (z')  h^{\alpha-1} ( z ) \bigl( \Ts\, d^N_k (z') \bigr)^\tra \nabla h ( z )    + o ( \| z\|^{\alpha \beta -1} ), \text{ as } \| z \| \to \infty.
\end{align}

We next use the stabilization result of \S\ref{sec:stabilization} to estimate  the term on the right-hand side of~\eqref{eq:boundary-increment-main-term-2} involving 
the inner product of the $N$-step drift 
with the gradient $\nabla h$. 
Let $v_1$ and $v_2$ be as defined in \eqref{eq:t_drift},
and the corresponding angles $\varphi_1$ and $\varphi_2$ relative to the 
appropriate normal vectors be as given by~\eqref{eq:normal-angles-transformed-1}
and~\eqref{eq:normal-angles-transformed-2}.
Recalling the computations for $\nabla h$ from~\eqref{eq:h-derivatives} and~\eqref{eq:norm-grad}, 
we obtain
from~\eqref{eq:boundary-drift-direction} that
\begin{align}
\label{eq:v1-gradient}
\frac{v_1^\tra \nabla h(z)}{\beta \| z \|^{\beta -1}} & = - \sin \bigl( (\beta-1) \theta -\beta_1 + \varphi_1 \bigr) , \\
\label{eq:v2-gradient}
\frac{v_2^\tra \nabla h(z)}{\beta \| z \|^{\beta -1}} & =   \sin \bigl( (\beta-1) \theta -\beta_1 + \varphi_0 - \varphi_2 \bigr) ,\end{align}
for all $z \in \R^2 \setminus \{ \0\}$.
For $z \in \tRq$, represented in polar coordinates by $(r,\theta)$,  we have 
\begin{align}
\label{eq:theta-boundary-1}
\theta & \to 0 \text{ as } \| z \| \to \infty \text{ with } z \in \tRqbx; \\
\label{eq:theta-boundary-2}
\theta & \to \varphi_0 \text{ as } \| z \| \to \infty \text{ with } z \in \tRqby.
\end{align} 
From~\eqref{eq:v1-gradient} and~\eqref{eq:v2-gradient} together with~\eqref{eq:theta-boundary-1} and~\eqref{eq:theta-boundary-2}, respectively, it follows that
\begin{align}
\label{eq:boundary-derivative-i}
\lim_{\| z\| \to \infty,\, z \in \tRq_k}&\frac{v_k^\tra \nabla h(   z)}{\beta \| z\|^{\beta - 1}}
= \sin(\beta_k - \varphi_k), \text{ for }  k \in \{1,2\}, 
\end{align}
where for the case $k=2$ we have used the fact that $\beta \varphi_0 - \beta_1 = \beta_2$, by~\eqref{eq:beta-def}.

Let $\veps >0$. By Proposition~\ref{prop:stabil}, we can choose $N \in \N$ large enough so that
$\| d^N_k (z') - \bmu_k \| < \| \Ts \bmu_k \| \veps$ for all $z' \in \Rq_k$ with $\| z' \| > 2RN$.
Then, by~\eqref{eq:t_drift},
\[ \bigl| \bigl( \Ts\, d^N_k (z') \bigr)^\tra \nabla h ( z ) - v_k^\tra \nabla h ( z ) \bigr| \leq  \veps, \text{ for all } z' \in \Rq_k \text{ with } \| z' \| > 2RN.
\]
Together with~\eqref{eq:boundary-derivative-i}, this means that we can choose large enough $r > 2RN$ so that
\[ \bigl| \bigl( \Ts\, d^N_k (z') \bigr)^\tra \nabla h (z ) - \beta \|  z \|^{\beta -1} \sin (\beta_k - \varphi_k ) \bigr| \leq  \| z \|^{\beta -1} \veps, \text{ for all } z \in \tRq_k \text{ with } \| z \| > r. 
\]
Using this in~\eqref{eq:boundary-increment-main-term-2} yields~\eqref{eq:boundary-increment}
together with the property~(i) for $\veps_k^N$ claimed in the proposition.
For fixed $N$,
we can choose $r$ large enough so that for all   $z \in \tRqbx$ with $\| z \| > r$, the term $\lambda_1^N (z)$
depends only on $e_2^\tra z$, which takes values in the finite set $\I{R}$. Thus  
we get the properties claimed in~(ii) for $k=1$; the case $k=2$ is similar.
\end{proof}

\begin{proof}[Proof of Corollary~\ref{cor:boundary-signs}]
Suppose that  $\chi \neq 0$, $\alpha >0$ and $\nu>1$ satisfy 
$1 - \nu  < \alpha \chi < \nu$. 
First consider the case where $\chi > 0$.
For $\veps \in (0,\chi)$ as specified in the corollary, 
pick $\beta_1,\beta_2 \in (-\pi/2,\pi/2)$ such that  for each~$k\in\{1,2\}$, 
\begin{equation}
\label{eq:pick_beta_k_1}
\varphi_k - \frac{\veps}{2}\varphi_0 < \beta_k <\varphi_k.
\end{equation} With this choice, by~\eqref{eq:chi-def} and~\eqref{eq:beta-def}, 
it holds that $0 < \chi -\veps < \beta < \chi$ (recall that $\varphi_0 \in (0,\pi)$ is defined at~\eqref{eq:inangle}). In particular, $\sign \beta = \sign \chi$.
Moreover, $0 < \alpha \beta < \alpha \chi < \nu$,
so that the hypotheses of Proposition~\ref{prop:boundary-estimate} are satisfied.
 By statement~(i) in Proposition~\ref{prop:boundary-estimate}, we can choose $N \in \N$ and $r \in \RP$
 such that, for each $k \in \{1,2\}$, 
 $\sin (\beta_k - \varphi_k ) + \veps_k^N (z) < -\veps$
 for all $z \in \tRqb  \setminus B_r$. Since $\beta >0$,
 we verify~\eqref{eq:boundary-control-minus} from~\eqref{eq:boundary-increment}. 
 Similarly, if we pick for each~$k \in \{1,2\}$,
\begin{equation}\label{eq:pick_beta_k_2}
 \varphi_k  < \beta_k <\varphi_k +   \frac{\veps}{2}\varphi_0,
 \end{equation}
 then
 $0 < \chi < \beta < \chi + \veps$. 
 Again,  $\sign \beta = \sign \chi$, and now
  $0 < \alpha \beta < \alpha \chi + \alpha \veps  < \nu$, by choice of $\veps$.
By 
Proposition~\ref{prop:boundary-estimate}, we can choose $N \in \N$ and $r \in \RP$
 such that, for each $k \in \{1,2\}$, 
 $\sin (\beta_k - \varphi_k ) + \veps_k^N (z) > \veps$
 for all $z \in \tRqb  \setminus B_r$. Since $\beta >0$,
 we verify~\eqref{eq:boundary-control-plus} from~\eqref{eq:boundary-increment}. 

On the other hand, consider $\chi < 0$.
Pick $\beta_1,\beta_2 \in (-\pi/2,\pi/2)$ such that for each $k \in \{1,2\}$,
\begin{equation}
\label{eq:pick_beta_k_3}
    \varphi_k  < \beta_k <\varphi_k +  \frac{\veps}{2}\varphi_0.
\end{equation}
Now $\chi < \beta < \chi + \veps < 0$,
and $\alpha \beta > \alpha \chi > 1 - \nu$, so, again, the hypotheses of Proposition~\ref{prop:boundary-estimate} hold. By Proposition~\ref{prop:boundary-estimate}, we can choose $N \in \N$ and $r \in \RP$
such that, for each $k \in \{1,2\}$, 
$\sin (\beta_k - \varphi_k ) + \veps_k^N (z) > \veps$ for all $z \in \tRqb  \setminus B_r$. Since $\beta < 0$,
we verify~\eqref{eq:boundary-control-minus} from~\eqref{eq:boundary-increment}. 
Similarly, if we pick $\beta_1,\beta_2 \in(-\pi/2,\pi/2)$ such that for each~$k \in \{1,2\}$, 
\begin{equation}
\label{eq:pick_beta_k_4}
\varphi_k - \frac{\veps}{2}\varphi_0  < \beta_k <\varphi_k.
\end{equation}
Then $\chi - \veps < \beta < \chi < 0$, while
  $\alpha \beta > \alpha \chi - \alpha \veps > 1 - \nu$, by choice of $\veps$. 
 By Proposition~\ref{prop:boundary-estimate}, we can choose $N \in \N$ and $r \in \RP$
 such that, for~$k \in \{1,2\}$, 
 $\sin (\beta_k - \varphi_k ) + \veps_k^N (z) < -\veps$
 for all $z \in \tRqb \setminus B_r$. Since $\beta < 0$,
 we verify~\eqref{eq:boundary-control-plus} from~\eqref{eq:boundary-increment}. 
\end{proof}

Corollary~\ref{cor:boundary-signs-truncated} below gives an analogue
of Corollary~\ref{cor:boundary-signs} for the 
truncated Lyapunov function defined using $h_b$ from~\eqref{eq:h-truncated}.   Corollary~\ref{cor:boundary-signs-truncated}
is proved by a minor modification of the proof of Corollary~\ref{cor:boundary-signs}, in which, 
similarly to \eqref{eq:increment_spli_adjust}--\eqref{eq:adjust_upper_bound_large_increment} in the proof of Proposition~\ref{prop:interior-estimate-truncated}, one 
separates large and small increments.

\begin{corollary}
    \label{cor:boundary-signs-truncated}
    Suppose that~\aref{hyp:irreducible}, \aref{hyp:partial_homogeneity}, \aref{hyp:moments},
and~\aref{hyp:full_rank} hold. Let $\nu > 1$ be the constant in~\aref{hyp:moments}, suppose that $\chi$ defined by~\eqref{eq:chi-def} and $\alpha > 0$
satisfy $\chi \neq 0$ and 
$1 - \nu  < \alpha \chi < \nu$.
Choose $\veps \in (0, | \chi |)$ for which $\alpha \veps < \nu - \alpha \chi$
and $\alpha \veps < \nu - 1 + \alpha \chi$.
Then there exist $\beta_1, \beta_2 \in (-\pi/2,\pi/2)$, $N \in \N$, $r \in \RP$, and $c > 0$ such that
$\beta$ given by~\eqref{eq:beta-def} satisfies $\sign \beta = \sign \chi$, 
 $| \chi| < | \beta| < | \chi | + \veps$, 
and, for all $b \geq r$,
\begin{equation}
\label{eq:boundary-control-plus-truncated}
\Exp \bigl[ h_b^\alpha (\tZ^N_{n+1}) - h_b^\alpha (\tZ^N_n) \bigmid \tZ^N_n = z \bigr]    \geq c \| z \|^{\alpha\beta -1}, \text{ for all } z \in \tRqb \cap ( B_b \setminus B_r ).
   \end{equation}
      Moreover,
   there exist (different from above)   $\beta_1, \beta_2 \in (-\pi/2,\pi/2)$, $N \in \N$, $r \in \RP$, and $c > 0$ such that $\sign \beta = \sign \chi$, $| \chi | - \veps < | \beta | < | \chi |$, 
and, for all $b \geq r$,
    \begin{equation}
\label{eq:boundary-control-minus-truncated}
   \Exp \bigl[ h_b^\alpha (\tZ^N_{n+1}) - h_b^\alpha (\tZ^N_n) \bigmid \tZ^N_n = z \bigr] 
   \leq -c \| z \|^{\alpha\beta -1}, \text{ for all } z \in \tRqb\cap ( B_b \setminus B_r ).
   \end{equation}
\end{corollary}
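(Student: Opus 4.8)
The plan is to run the proof of Corollary~\ref{cor:boundary-signs} essentially verbatim, the only change being that the boundary increment expansion~\eqref{eq:boundary-increment} from Proposition~\ref{prop:boundary-estimate} must be re-established with $h_b$ in place of $h$, valid for $z \in \tRqb \cap (B_b \setminus B_r)$ and \emph{uniformly} in $b \ge r$. To do this one repeats the proof of Proposition~\ref{prop:boundary-estimate}, but inserts the large-/small-increment split used in the proof of Proposition~\ref{prop:interior-estimate-truncated} (see~\eqref{eq:increment_spli_adjust}--\eqref{eq:adjust_upper_bound_large_increment}). Fix $k \in \{1,2\}$, $\alpha > 0$, and $\beta_1, \beta_2 \in (-\pi/2,\pi/2)$ with $\beta$ from~\eqref{eq:beta-def} satisfying $1 - \nu < \alpha\beta < \nu$ (these will be specialized later via~\eqref{eq:pick_beta_k_1}--\eqref{eq:pick_beta_k_4}), take $\delta \in (0, \tfrac{\nu-1-(\alpha\beta)^-}{\nu})$ and, writing $\Delta_N := \tZ^N_1 - \tZ^N_0$ as in the proof of Proposition~\ref{prop:boundary-estimate}, split the conditional increment of $h_b^\alpha$ over the event $E_{N,\delta}(z)$ from~\eqref{eq:E-N-delta} and its complement, exactly as in~\eqref{eq:conditional-increment-f-partition-boundary}.

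First I would treat the small-increment term. Exactly as in the proof of Proposition~\ref{prop:interior-estimate-truncated}, there is $r_0 = r_0(\delta)$ so that for $z \in \tRq_k \setminus B_{r_0}$ one has $\|z + \Delta_N\| \le 2\|z\|$ on $E_{N,\delta}(z)$; hence if in addition $\|z\| \le b$ then $\|z + \Delta_N\| \le 2b$, so by the remark following~\eqref{eq:h-truncated} we get $h_b(z) = h(z)$ and $h_b(z + \Delta_N) = h(z + \Delta_N)$ on $E_{N,\delta}(z)$, since all the relevant points lie in $\tRq$. Consequently, for $z \in \tRqb \cap (B_b \setminus B_{r_0})$ the small-increment term for $h_b^\alpha$ coincides with the corresponding term for $h^\alpha$, so the first-order Taylor estimate~\eqref{eq:first-order-taylor-h}--\eqref{eq:boundary-increment-main-term-2} and the stabilization input of Proposition~\ref{prop:stabil} apply unchanged, producing the leading term $\alpha\beta\|z\|^{\beta-1}h^{\alpha-1}(z)\lambda_k^N(z)[\sin(\beta_k - \varphi_k) + \veps_k^N(z)]$ together with an $o(\|z\|^{\alpha\beta-1})$ error, with the same $\lambda_k^N, \veps_k^N$ as in Proposition~\ref{prop:boundary-estimate} and \emph{all implied constants independent of $b$}. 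For the big-increment term, since $0 < h_b \le h$ on $\cWs^\delta \setminus \{\0\}$, one obtains, just as at~\eqref{eq:adjust_upper_bound_large_increment}, the pointwise bound $\bigl|h_b^\alpha(z + \Delta_N) - h_b^\alpha(z)\bigr|\,\Indno{E_{N,\delta}^\rc(z)} \le C\|\Delta_N\|^{(\alpha\beta)^+/(1-\delta)}\,\Indno{E_{N,\delta}^\rc(z)}$ with $C$ not depending on $b$; Lemma~\ref{lem:big-jump}, applicable by the moment bound~\eqref{eq:Delta-N-moment-bound}, then shows this term is $o(\|z\|^{\alpha\beta-1})$ as $\|z\| \to \infty$, again uniformly in $b$, exactly as at~\eqref{eq:boundary-big-jump}. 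Combining the two terms yields~\eqref{eq:boundary-increment} for $h_b^\alpha$, valid for $z \in \tRqb \cap (B_b \setminus B_{r_0})$ with all error terms uniform in $b$.

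From this point the argument is identical to the proof of Corollary~\ref{cor:boundary-signs}: in each of the four sign configurations one makes the choice of $\beta_1, \beta_2$ from the relevant one of~\eqref{eq:pick_beta_k_1}--\eqref{eq:pick_beta_k_4}, fixes $N$ and then $r \ge r_0$ large enough — using statement~(i) of Proposition~\ref{prop:boundary-estimate} to control $\veps_k^N$ and statement~(ii) for the uniform positivity and boundedness of $\lambda_k^N$ — so that the bracketed factor in~\eqref{eq:boundary-increment} has the desired sign and magnitude for all $z \in \tRqb \setminus B_r$; the $b$-uniformity established above then delivers~\eqref{eq:boundary-control-plus-truncated} and~\eqref{eq:boundary-control-minus-truncated} for every $b \ge r$. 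The one place that genuinely needs attention — and the main, though mild, obstacle — is precisely this uniformity in $b$: one must check that raising the truncation level $b$ does not degrade the $o(\cdot)$ estimates or the bounds on $\veps_k^N$, and this is exactly what the observations ``$z + \Delta_N \in B_{2b}$ on $E_{N,\delta}(z)$ when $z \in B_b$'' and ``$0 < h_b \le h$'' are there to guarantee.
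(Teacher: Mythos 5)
Your proof is correct and matches the paper's intended approach exactly: the paper gives only a one-sentence indication that Corollary~\ref{cor:boundary-signs-truncated} follows by modifying Corollary~\ref{cor:boundary-signs} using the large/small increment split from Proposition~\ref{prop:interior-estimate-truncated}, and you have carried out precisely that modification, correctly identifying that the key points are (i) on $E_{N,\delta}(z)$ with $z \in B_b$ one has $\|z+\Delta_N\| \le 2b$ so $h_b$ agrees with $h$ at both points, and (ii) $0 < h_b \le h$ gives a $b$-uniform bound on the large-increment term.
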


\subsection{Lower bounds on the extent of an excursion}
\label{sec:excursion-bounds}

An important ingredient in establishing lower tail bounds on passage times is the following
lower tail bound on the extent of an excursion, i.e., the probability that the trajectory reaches
a large distance from the origin before returning to a bounded set. Such an estimate yields
a corresponding tail bound on the return time, since trajectories of the process are essentially diffusive over large scales.  The technical result we will apply is Theorem~\ref{thm:moments-lower-bound} below;
see~\S\ref{sec:foster-lyapunov} for further remarks and references.

\begin{proposition}
\label{prop:excursion-bound}
 Suppose that~\aref{hyp:irreducible},   \aref{hyp:partial_homogeneity}, \aref{hyp:moments},
and~\aref{hyp:full_rank} hold, that~$\nu$ from~\aref{hyp:moments}
satisfies $\nu > 2$, and that $\chi$ defined by~\eqref{eq:chi-def}
satisfies $0 < \chi < \nu$.
Let~$r > 0$. Then there exists $r_0 \in (r,\infty)$ such that,
for every $\veps >0$, there is a $c_0 >0$ (depending on $\veps$, $r$, and $\beta$) such that
\begin{equation}
\label{lem:submartingale-bound-xi}
\Pr_z \Bigl( \max_{0 \leq n \leq \tau_Z (r) } \| Z_n \| \geq s \Bigr) \geq c_0 s^{-\chi-\veps},
\text{ for all } z \in \Rq \setminus B_{r_0} \text{ and all } s \geq 1.
\end{equation}
\end{proposition}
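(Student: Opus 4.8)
\medskip
\noindent\emph{Sketch of the proof.}\quad The plan is to transport the question to the transformed, time-changed walk $\tZ^N$ of~\eqref{eq:time-change-process} and then apply the general excursion lower bound Theorem~\ref{thm:moments-lower-bound}, fed with the submartingale estimates of Corollary~\ref{cor:boundary-signs-truncated} and Proposition~\ref{prop:interior-estimate-truncated}. First I would make three harmless reductions. The probability in~\eqref{lem:submartingale-bound-xi} is non-increasing in $r$, since a smaller target ball $B_r$ produces a pathwise-longer excursion, so it suffices to treat $r>R\sqrt{2}$; both that probability and $s^{-\chi-\veps}$ are non-increasing in $s$, so it suffices to treat $s$ large and $\veps\in(0,\nu-\chi)$ small; and if $\|z\|\ge s$ the claim is immediate (take $n=0$), so I may assume $\|z\|<s$. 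Set $c:=\inf_{\|v\|=1}\|\Ts v\|$ and $C:=\sup_{\|v\|=1}\|\Ts v\|$, both positive and finite by~\aref{hyp:full_rank}. Arguing as in the proof of Lemma~\ref{lem:time-change} (which exploits the one-sided jump bounds~\eqref{eq:bounded-jumps}--\eqref{eq:bounded-jumps-12} to control how far $\|Z_\ell\|$ can drop between consecutive times $T_N(m)$), it then suffices to show that, for a suitable inner radius $\rho_1$ and any outer radius $\rho_2$ of order $s$, the walk $\tZ^N$ reaches Euclidean norm at least $\rho_2$ before entering $B_{\rho_1}$, with probability at least $c_1\rho_2^{-\alpha\beta}$, uniformly over start points $z\in\Rq\setminus B_{r_0}$.

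For the Lyapunov function, since $0<\chi<\nu$, for $\veps$ small I would choose $\alpha>1$ close to $1$ (so that $1-\nu<\alpha\chi<\nu$ and the $\veps$-conditions of Corollary~\ref{cor:boundary-signs-truncated} are met), feed $\alpha$ into the submartingale branch~\eqref{eq:boundary-control-plus-truncated} of that corollary to obtain angles $\beta_1,\beta_2$ with $\beta$ from~\eqref{eq:beta-def} slightly above $\chi$ and small enough that $\alpha\beta\in(\chi,\chi+\veps)$, and observe that Proposition~\ref{prop:interior-estimate-truncated} then applies to the same $\alpha,\beta_1,\beta_2$ through its case~\eqref{eq:interior-sign-plus-truncated} (indeed $0<\alpha\beta<\chi+\veps<\nu\le\nuint$, so $2-\nuint<\alpha\beta<\nuint$). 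Combining the two, there are $N\in\N$ and $r_\ast\in\RP$ such that, for every $b\ge r_\ast$, the process $n\mapsto h_b^\alpha(\tZ^N_n)$ is a submartingale whenever $\tZ^N_n\in B_b\setminus B_{r_\ast}$; here $\tRqint\cup\tRqb$ covers the transformed annulus once $r_\ast>CR\sqrt{2}$, which I may assume. I would also record that $h_b=h$ on $B_{2b}$ and $\veps_0\|w\|^\beta\le h(w)\le\|w\|^\beta$ on the wedge, by Lemma~\ref{lem:growth-factor}.

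The final step is the optional-stopping argument that underlies Theorem~\ref{thm:moments-lower-bound}. Fix $s$ large; put $\rho_1:=\max\bigl(r_\ast,\,C(r+2NR)\bigr)+1$, $\rho_2:=Cs$ and $b:=\rho_2+1$ (so $b\ge r_\ast$ for $s$ large), and let $\sigma$ be the exit time of $\tZ^N$ from the open annulus $\{\,w:\rho_1<\|w\|<\rho_2\,\}$, which is a.s.\ finite by~\aref{hyp:irreducible}. Then $h_b^\alpha(\tZ^N_{n\wedge\sigma})$ is a bounded submartingale, so $h_b^\alpha(\Ts z)\le\Exp_z\bigl[h_b^\alpha(\tZ^N_\sigma)\bigr]$; on $\{\|\tZ^N_\sigma\|\le\rho_1\}$ the integrand is at most $\rho_1^{\alpha\beta}$, and on $\{\|\tZ^N_\sigma\|\ge\rho_2\}$ the truncation caps it at $(2b)^{\alpha\beta}$ — this cap is exactly why $h_b$ rather than $h$ must be used, since the upward jumps of $Z$ are unbounded. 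Rearranging,
\[
\Pr_z\bigl(\|\tZ^N_\sigma\|\ge\rho_2\bigr)\ \ge\ \frac{h_b^\alpha(\Ts z)-\rho_1^{\alpha\beta}}{(2b)^{\alpha\beta}}\,;
\]
taking $r_0$ a large enough constant (depending only on $\rho_1,c,\veps_0,\alpha\beta$) forces $h_b^\alpha(\Ts z)\ge 2\rho_1^{\alpha\beta}$ for all $z\in\Rq\setminus B_{r_0}$ with $\|z\|<s$ (using $\|\Ts z\|\le Cs<2b$, so $h_b=h$ there), whence the numerator is at least $\tfrac12 h_b^\alpha(\Ts z)\ge\rho_1^{\alpha\beta}$, a positive constant free of $s$, and $\Pr_z(\|\tZ^N_\sigma\|\ge\rho_2)\ge c_1 s^{-\alpha\beta}$. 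On that event $\tZ^N$ reaches norm $\ge Cs$ before entering $B_{\rho_1}$, and since $\rho_1>C(r+2NR)$ the one-sided jump bound keeps $\|Z_\ell\|>r$ along the associated $Z$-path up to the relevant instant, forcing $\max_{0\le\ell\le\tau_Z(r)}\|Z_\ell\|\ge s$; hence $\Pr_z\bigl(\max_{n\le\tau_Z(r)}\|Z_n\|\ge s\bigr)\ge c_1 s^{-\alpha\beta}\ge c_1 s^{-\chi-\veps}$ since $s\ge1$ and $\alpha\beta<\chi+\veps$, and the bounded range of $s$ is absorbed by shrinking $c_1$. The step I expect to be most delicate is not the Lyapunov estimates, which are already in place, but the transfer between $Z$ and $\tZ^N$: Lemma~\ref{lem:time-change} is phrased for hitting times, so its proof must be re-run for running maxima, carrying the one-sided jump bound through each $N$-step block so that avoiding $B_{\rho_1}$ on the $\tZ^N$-clock genuinely implies avoiding $B_r$ on the $Z$-clock.
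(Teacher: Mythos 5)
Your argument is essentially the paper's own: choose $\alpha>1$, obtain $\beta\in(\chi,\chi+\veps)$ with $\alpha\beta$ still below $\nu$ from Corollary~\ref{cor:boundary-signs-truncated}, combine with Proposition~\ref{prop:interior-estimate-truncated} to make $h_b^\alpha(\tZ^N_{n\wedge\tau_r\wedge\sigma_b})$ a bounded submartingale on an annulus, apply optional stopping, and translate back via the jump bound as in Lemma~\ref{lem:time-change}; the paper compresses the last translation to one sentence and you usefully flag it as needing a running-maximum (not hitting-time) version of that lemma. One small framing slip: the opening sentence claims the plan is to "apply Theorem~\ref{thm:moments-lower-bound}," but Proposition~\ref{prop:excursion-bound} is the complementary ingredient to that theorem (it is used alongside it in the proof of Theorem~\ref{thm:main-theorem}\ref{thm:main-theorem-iii}), not an application of it, and indeed your sketch never invokes it — the optional-stopping computation you carry out is not what underlies Theorem~\ref{thm:moments-lower-bound}, whose content is a diffusive-duration bound given a maximal-excursion bound.
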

\begin{proof}
Suppose that $0<\chi<\nu$. Let $\alpha \in (1, \nu/\chi)$, and then pick $\veps \in (0, \frac{\nu}{\alpha} - \chi)$;
such choices of $\veps$ and $\alpha$ are possible since we assumed that $\chi < \nu$.
Corollary~\ref{cor:boundary-signs-truncated} shows that there exist
 $\beta_1, \beta_2 \in (-\pi/2,\pi/2)$, $N \in \N$, and $r_1 \in \RP$ such that
$\beta$ given by~\eqref{eq:beta-def} satisfies $\chi < \beta < \chi + \veps < \nu/\alpha$,
and, for all $b \geq r_1$,
\begin{equation}
\label{eq:boundary-control-plus-truncated-sign}
\Exp \bigl[ h_b^\alpha (\tZ^N_{n+1}) - h_b^\alpha (\tZ^N_n) \bigmid \tZ^N_n = z \bigr]    \geq 0, \text{ for all } z \in \tRqb \cap ( B_b \setminus B_{r_1} ).
   \end{equation}
Moreover, Proposition~\ref{prop:interior-estimate-truncated}, together with the fact that $\alpha > 1$ 
and $0 < \alpha \beta < \nu \leq \nuint$, shows that there exists $r_2 \in \RP$ 
such that, for all $b \geq r_2$,
\begin{equation}
\label{eq:interior-control-plus-truncated-sign}
\Exp \bigl[ h_b^\alpha (\tZ^N_{n+1}) - h_b^\alpha (\tZ^N_n) \bigmid \tZ^N_n = z \bigr]    \geq 0, \text{ for all } z \in \tRqint \cap ( B_b \setminus B_{r_2} ).
   \end{equation}
Combining~\eqref{eq:boundary-control-plus-truncated-sign} and~\eqref{eq:interior-control-plus-truncated-sign}, we conclude that for every $\alpha \in (1, \nu/\chi)$, there exist $N \in \N$ and $r_3 > (R\sqrt{2}) \vee r_1 \vee r_2$ such that, for all $b \geq r \geq r_3$, 
\begin{equation}
\label{eq:submartingale-control-plus-truncated}
\Exp \bigl[ h_b^\alpha (\tZ^N_{n+1}) - h_b^\alpha (\tZ^N_n) \bigmid \tZ^N_n = z \bigr]    \geq 0, \text{ for all } z \in \tRq \cap ( B_b \setminus B_{r} ).
   \end{equation}
   For simplicity of notation, write $\xi_n := h_b^\alpha (\tZ^N_{n})$. 
Define $\sigma_b := \inf \{ n \in \ZP : \| \tZ^N_n \| \geq b \}$
and $\tau_r := \inf \{ n \in \ZP : \| \tZ^N_n \| \leq r \} $.
From the definition of $h_b$ in~\eqref{eq:h-truncated} together with~\eqref{eq:submartingale-control-plus-truncated}, for every $b \geq r$
 with $r \geq r_3$, it holds that 
 $\xi_{n \wedge \tau_r \wedge \sigma_b}$
 is a non-negative submartingale, uniformly bounded by $(2 b)^{\alpha \beta}$.
Hence an application of the optional stopping theorem (e.g.~\cite[p.~33]{Bluebook}) at the (a.s.-finite)
stopping time $\tau_r \wedge \sigma_b$
shows that, for $z \in \tRq \cap ( B_b \setminus B_{r} )$, 
\begin{align*}
h^\alpha (z) = \Exp[ \xi_0 \mid \tZ^N_0 = z] & \leq \Exp [ \xi_{\tau_r \wedge \sigma_b} \mid \tZ^N_0 = z ] \\
& \leq r^{\alpha \beta}  \Pr  ( \tau_r < \sigma_b  \mid \tZ^N_0 = z )
+ (2 b)^{\alpha \beta} \Pr ( \sigma_b < \tau_r   \mid \tZ^N_0 = z ) ,
\end{align*}
using the fact that $| h(z) | \leq \| z \|^\beta$ so $\sup_{z \in B_r} h^\alpha (z) \leq r^{\alpha \beta}$. Thus, from the lower bound in~\eqref{eq:growth-factor} and the fact that $\alpha >1$, we obtain, for all $b \geq r \geq r_3$,
\[ \Pr ( \sigma_b < \tau_r  \mid \tZ^N_0 = z  ) \geq \frac{ \veps_0 \| z \|^{\alpha \beta} - r^{\alpha \beta}}{(2b)^{\alpha \beta} - r^{\alpha \beta}} , \text{ for all }  z \in \tRq \setminus  B_{r} . \]
There exists $r_0 >0$ (depending on $r$, $\Sigma$, $\beta$, and $\veps_0$, but not $\veps$)
such that $\| z \| \geq r_0$ implies (i) $\| \Ts (z) \| \geq r$, and (ii) $\veps_0 \| \Ts(z) \|^{\alpha \beta} - r^{\alpha \beta} \geq 1$, say; hence for all $b \geq r$,
\[ 
\Pr_z \Bigl( \max_{0 \leq n \leq \tau_r} \| \tZ^N_n \| \geq b  \Bigr) 
\geq
\Pr_z( \sigma_b < \tau_r  ) \geq \frac{c}{b^{\alpha\beta}} , \text{ for all } z \in \Rq \setminus  B_{r_0},
\]
 where $c >0$ depends on $\alpha, \beta$ (and hence $\veps$) but not $r$ or $b$. 
 A translation from $\tZ^N$ back to $Z$ (similarly to Lemma \ref{lem:time-change})  yields the statement in the lemma, noting that $\alpha \beta < \alpha \chi + \alpha \veps$ can be chosen to be arbitrarily close to $\chi$ by choice of $\alpha >1$ and then $\veps >0$.
\end{proof}

\subsection{Completing the proofs}
\label{sec:proof-of-theorem}

\begin{proof}[Proof of Theorem~\ref{thm:main-theorem}.]
Suppose that $\chi$ as defined at~\eqref{eq:chi-def} satisfies $\chi \neq 0$,
and let $\nu >1$ be the constant in hypothesis~\aref{hyp:moments}.
Let $\alpha \in (0,1)$ with $\alpha \chi < \nu$.
Combining Proposition~\ref{prop:interior-estimate}
and Corollary~\ref{cor:boundary-signs}, 
we deduce that for every $\veps \in (0, | \chi| )$ sufficiently small, there exist $N \in \N$, $r \in \RP$, $c >0$, and
$\beta_1, \beta_2 \in (-\pi/2,\pi/2)$
such that $\beta$ given by~\eqref{eq:beta-def}
satisfies $\sign \beta = \sign \chi$ and $| \chi | - \veps  < | \beta | < | \chi |$, and there holds the supermartingale estimate
\begin{equation}
    \label{eq:supermartingale}
    \Exp \bigl[ h^\alpha (\tZ^N_{n+1}) - h^\alpha (\tZ^N_n) \bigmid \tZ^N_n = z \bigr] 
   \leq -c \| z \|^{\alpha\beta -2}, \text{ for all } z \in \tRq \setminus B_r.
\end{equation}
In particular, if $\chi >0$ then $\beta >0$, and so $h (z) > 0$ for all $z \in \tRq \setminus \{ \0\}$ and
\[
  \lim_{r \to \infty}\inf \bigl\{ h(z) \colon
   z \in \tRq \setminus B_r \bigr\} = \infty,
\]
and then an application of Theorem~\ref{thm:recurrence}\ref{thm:recurrence-i} implies that $\tZ^N_n$
is recurrent,
and hence that $Z_n$ is recurrent, by Lemma~\ref{lem:time-change}. On the other hand, if $\chi <0$ then $\beta <0$,  the $h$
 for which~\eqref{eq:supermartingale}
holds satisfies $h (z) > 0$ for all $z \in \tRq \setminus \{ \0\}$, and
\[
  \lim_{r \to \infty}\inf \bigl\{ h(z) \colon
   z \in \tRq \setminus B_r \bigr\} = 0.
\]
Then Theorem~\ref{thm:recurrence}\ref{thm:recurrence-ii} implies that $\tZ^N_n$
is transient,
and hence  $Z_n$ is transient, by Lemma~\ref{lem:time-change}.
This completes the proof of part~\ref{thm:main-theorem-i} of the theorem.

For part~\ref{thm:main-theorem-ii}, we again use the supermartingale estimate provided by~\eqref{eq:supermartingale}.
Suppose $\chi > 0$, so that $\beta >0$.
From Lemma~\ref{lem:growth-factor}, we get from~\eqref{eq:supermartingale} that
\[
    \Exp \bigl[ h^\alpha (\tZ^N_{n+1}) - h^\alpha (\tZ^N_n) \bigmid \tZ^N_n = z \bigr] 
   \leq -c \bigl( h^\alpha (z) \bigr)^{1- \frac{2}{\alpha \beta}}, \text{ for all } z \in \tRq \setminus B_r.
\]
An application of Theorem~\ref{thm:moments-upper-bound}
with $\Lambda = B_r$, $f = h^\alpha$, and ${\eta} = 1- \frac{2}{\alpha \beta} < 1$ shows that
$\Exp [ \tau_Z(r)^\gamma ] < \infty$
for all~$\gamma < \frac{\alpha \beta}{2}$.
Here $\alpha, \beta$ were arbitrary subject to the constraints $\alpha \in (0,  1 \wedge \frac{\nu}{\chi})$
and $\beta \in (0,\chi)$, so we may conclude that $\Exp [ \tau_Z(r)^\gamma ] < \infty$
for all $\gamma < \frac{ \chi \wedge \nu}{2}$. Markov's inequality then completes the proof of
part~\ref{thm:main-theorem-ii} of the theorem.

Finally, we prove part~\ref{thm:main-theorem-iii}.
Suppose that $\chi >0$, and that~\aref{hyp:moments} holds with $\nu > 2 \vee \chi $. 
The $\alpha = 1/\beta$ case of 
Proposition~\ref{prop:boundary-estimate} (and an argument
similar to that in the proof of Corollary~\ref{cor:boundary-signs})
shows that we
may choose $\beta_1,\beta_2 \in (-\pi/2,\pi/2)$ such that $\beta > 0$ and, for all $r >0$ large enough, 
\begin{equation}
\label{eq:submartingale-boundary}
\Exp \bigl[   h^{1/\beta} (\tZ^N_{n+1}) - h^{1/\beta} ( \tZ^N_n)  \bigmid \tZ^N_n = z \bigr]
\geq 0, \text{ for all } z \in \tRqb \setminus B_r .
\end{equation}
We have also  from Proposition~\ref{prop:interior-estimate} that
for some $c_\beta \in \R$ (one may choose $c_\beta >0$ if $\beta < 1$),
\begin{equation}
\label{eq:submartingale-interior}
\Exp \bigl[   h^{1/\beta} (\tZ^N_{n+1}) - h^{1/\beta} ( \tZ^N_n)  \bigmid \tZ^N_n = z \bigr]
\geq \frac{c_\beta}{\| z \|}, \text{ for all } z \in \tRqint \setminus B_r .
\end{equation}
The estimates~\eqref{eq:submartingale-boundary} and~\eqref{eq:submartingale-interior} together with
Lemma~\ref{lem:growth-factor} imply that, for some $c >0$ (depending on $\beta$), and all $r > \sqrt{2}R$ large enough,
\begin{equation}
\label{eq:lamperti-lower-bound}
\Exp \bigl[   h^{1/\beta} (\tZ^N_{n+1}) - h^{1/\beta} ( \tZ^N_n)  \bigmid \tZ^N_n = z \bigr]
\geq - \frac{c}{h^{1/\beta} (z)}, \text{ for all } z \in \tRq \setminus B_r .
\end{equation}
Next, recall the definition of $E_{N,\delta}$ from~\eqref{eq:E-N-delta}. From~\eqref{eq:first-order-taylor-h}
together with~\eqref{eq:taylor-remainder} and the upper
bounds on $h$ and its derivatives in~\eqref{eq:growth-factor} and~\eqref{eq:growth-bound-derivatives},
we have that
\[
\bigl|  h^\alpha (z + \Delta_N) - h^\alpha (z) \bigr| \Indno{ E_{N,\delta} (z) } 
\leq C ( 1+  \| z \|)^{\alpha \beta - 1}  (1+ \| \Delta_N \|) , \]
for some constant $C < \infty$ and all $z \in \tRq$.
On the other hand, similarly to~\eqref{eq:big-jump-interior},
\[ \bigl| h^\alpha (z + \Delta_N) - h^\alpha(z) \bigr| \Indno{E^\rc_{N,\delta} (z)}
\leq C    \| \Delta_N \|^{(\alpha \beta)^+/(1-\delta)}.
\]
Choose $\alpha = 1/\beta$. Then from the last two bounds, for a sufficiently small $\delta>0$ we obtain
\[ \Exp \Bigl[ \bigl(  h^{1/\beta} (\tZ^N_{n+1}) - h^{1/\beta} ( \tZ^N_n) \bigr)^2 \Bigmid \tZ^N_n = z \Bigr] \leq C  \Exp \bigl[ (1+ \| \Delta_N \|)^2 \bigmid \tZ^N_0 = z \bigr] ,\]
which is bounded uniformly for $z \in \tRq$, provided~\aref{hyp:moments} holds with $\nu \geq 2$.
This fact, together with~\eqref{eq:lamperti-lower-bound}, shows that $f = h^{1/\beta}$   satisfies the conditions of Theorem~\ref{thm:moments-lower-bound} with $\Lambda = B_r$, from which it follows that, for some $C \in (0,\infty)$,
\[ \Pr ( \ttau_Z(r) \geq n ) \geq \frac{1}{2} \Pr \Bigl( \max_{0 \leq m \leq \ttau_Z(r)} h^{1/\beta} ( \tZ^N_m ) \geq C n^{1/2} \Bigr) , \text{ for all } n \in \ZP.
\]
From Lemma~\ref{lem:growth-factor} and a comparison between hitting times for $\tZ^N$ and $Z$, as in Lemma~\ref{lem:time-change}, there is $C'>0$ for which
\[ \Pr ( \tau_Z(r) \geq n ) \geq \frac{1}{2} \Pr \Bigl( \max_{0 \leq m \leq \tau_Z(r')} \|  Z_n \| \geq C' n^{1/2} \Bigr) , \text{ for all } n \in \ZP,
\]
and then Proposition~\ref{prop:excursion-bound} (which is applicable since $\nu > 2 \vee \chi$) 
completes the proof.
\end{proof}

\section{Examples and applications}
\label{sec:examples}

\subsection{The left-continuous case}
\label{sec:petritis}

In this subsection, we consider the case where we impose the additional condition
\begin{equation}
\label{eq:nearest-neighbour}
    \pint (x,y) = 0 , \text{ unless } x \geq -1, y \geq -1;
\end{equation}
this \emph{left-continuity} assumption
can facilitate analysis (see e.g.~\cite{km14} for a recent example).
 When $R>1$, left-continuity is stronger than the bound~\eqref{eq:bounded-jumps} which is implied by partial homogeneity alone. Note that~\eqref{eq:nearest-neighbour} imposes a constraint only in $\Rqint$ (and not $\Rqb$). 

 The assumption~\eqref{eq:nearest-neighbour} leads to a significant simplification. Indeed, we will demonstrate that the probability measures $\pi_1, \pi_2$ on $\I{R}$ defined in  Proposition~\ref{prop:projection-chains}\ref{prop:projection-chains-ii} can   be computed by solving an explicit linear system.
 Indeed, in general the $\pi_k$ are defined through the invariant measure of $\ZP$-valued Markov chains, and can be represented through finite chains via embedding (see Remark~\ref{rem:embedded-chain}) but the transition matrices of the resulting embedded chains  are usually described only implicitly.
 With $q_1, q_2$ defined at~\eqref{eq:q1-def} and~\eqref{eq:q2-def}, define $\barq_1, \barq_2 : \I{R} \times \I{R} \to [0,1]$ by
 \[ \barq_k ( i, j) = \begin{cases} q_k (i, j) &\text{if } j \in \I{R-1}, \\
 \sum_{\ell \in \bI{R-1} } q_k (i, \ell) &\text{if } j = R-1.
 \end{cases}
 \]
 Then $\barq_1$ is the transition function of an irreducible Markov chain on the finite state space $\I{R}$
 which depends only on finitely many values of the basic model data, namely $\sum_{x \in \Z} p_1 (i; (x,j))$
 with $i, j \in \I{R}$; similarly for $\barq_2$.
 
 \begin{lemma}
     \label{lem:finite-system}
     Under the condition~\eqref{eq:nearest-neighbour},
     for each $k \in \{1,2\}$, the distribution $\pi_k$ on $\I{R}$ defined in 
      Proposition~\ref{prop:projection-chains}\ref{prop:projection-chains-ii} is the unique stationary distribution corresponding to $\barq_k$.
     \end{lemma}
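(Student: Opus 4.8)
The plan is to recognise the finite chain $\barq_k$ as the \emph{embedded} chain $\txi$ on $\I{R}$ of Remark~\ref{rem:embedded-chain} --- obtained by watching the countable-state chain $q_k$ only at its successive visits to $\I{R}$ --- and then to appeal to Remark~\ref{rem:embedded-chain} itself, which already identifies the unique stationary distribution of that embedded chain with the measure $\pi_k$ of Proposition~\ref{prop:projection-chains}\ref{prop:projection-chains-ii}.

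First I would extract the single structural consequence of the left-continuity hypothesis~\eqref{eq:nearest-neighbour}. Fix $k=1$ (the case $k=2$ is identical, with $q_2$ in place of $q_1$). For $i\in\bI{R}$, the definition~\eqref{eq:q1-def} reads $q_1(i,j)=\sum_{x\in\Z}\pint((x,j-i))$, so~\eqref{eq:nearest-neighbour} forces $q_1(i,j)=0$ whenever $j\le i-2$; thus, while it remains in $\bI{R}$, the $q_1$-chain decreases by at most one per step. Hence, if the chain ever jumps from a state $m\in\bI{R}$ to a state $m'\le R-1$, then $m'\ge m-1\ge R-1$, which forces $m=R$ and $m'=R-1$: every excursion of the $q_1$-chain out of $\I{R}$ and into $\bI{R}$ can re-enter $\I{R}$ only through the single state $R-1$. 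Since the $q_1$-chain is irreducible and recurrent on $\ZP$ (cf.\ Proposition~\ref{prop:projection-chains} and its use in~\S\ref{sec:stabilization}), each such excursion is almost surely finite.

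With this in hand I would compute the one-step law of the embedded chain $\txi$ started from $i\in\I{R}$. If the first $q_1$-step lands at $j\in\I{R-1}=\{0,\dots,R-2\}$, an event of probability $q_1(i,j)$, then $\txi_1=j$; if it lands at $R-1$, an event of probability $q_1(i,R-1)$, then $\txi_1=R-1$; and if it lands in $\bI{R}$, an event of probability $\sum_{\ell\ge R}q_1(i,\ell)$, then by the previous paragraph the chain returns to $\I{R}$ at the state $R-1$, so again $\txi_1=R-1$. Therefore the transition probabilities of $\txi$ from $i$ are $q_1(i,j)$ for $j\in\I{R-1}$ and $q_1(i,R-1)+\sum_{\ell\ge R}q_1(i,\ell)=\sum_{\ell\ge R-1}q_1(i,\ell)$ for $j=R-1$ --- that is, $\txi$ is the Markov chain with transition function $\barq_1$. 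Remark~\ref{rem:embedded-chain} shows that the unique stationary distribution of $\txi$ is $\pi_1$, normalised as in~\eqref{eq:xi-pi-def}; and since $\barq_1$ is irreducible on the finite set $\I{R}$ it has a unique stationary distribution, which therefore equals $\pi_1$. The one point deserving care is the re-entry-at-$R-1$ observation of the second paragraph, which is the only place~\eqref{eq:nearest-neighbour} is used; note in particular that~\eqref{eq:nearest-neighbour} constrains $\pint$ only, so the first step out of $\I{R}$ (governed by $p_1$) is unrestricted, which is exactly why the lumping in $\barq_1$ touches only transitions that pass through $\bI{R}$. Everything else is bookkeeping.
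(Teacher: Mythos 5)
Your proof is correct, and it is essentially the ``probabilistic explanation'' that the paper explicitly gestures at in the sentence immediately preceding its proof of Lemma~\ref{lem:finite-system}: ``We give an algebraic proof \dots at the end of this subsection; a probabilistic explanation goes via the embedded chain described in Remark~\ref{rem:embedded-chain}.'' The paper chooses not to spell this out and instead verifies the balance equations $\sum_i \pi_k(i)\,\barq_k(i,j)=\pi_k(j)$ directly, case-splitting on $j\in\I{R-1}$ versus $j=R-1$ and using the same structural fact that you isolate — that $q_k(i,j)=0$ for $i\in\bI{R}$, $j\le i-2$.

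The two arguments buy different things. The paper's algebraic proof is shorter, entirely self-contained, and makes the verification mechanical: once one believes that $\pi_k$ is $q_k$-invariant, the two displayed computations give $\barq_k$-invariance, and uniqueness comes for free from irreducibility on a finite state space. Your proof instead explains \emph{why} $\barq_k$ is the right finite matrix: the key observation is that left-continuity~\eqref{eq:nearest-neighbour} forces every excursion of the $q_k$-chain into $\bI{R}$ to re-enter $\I{R}$ through the single door $R-1$, so lumping all of $\bI{R-1}$ onto $R-1$ reproduces exactly the one-step law of the embedded chain $\txi$. This makes the construction of $\barq_k$ look inevitable rather than ad hoc, and it clarifies why left-continuity is needed at all (without it the embedded chain could re-enter at any of the states $0,\dots,R-1$ and the finite lumping would not be a Markov chain). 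You are also right to flag that~\eqref{eq:nearest-neighbour} constrains only $\pint$, not $p_1,p_2$, so the first step out of $\I{R}$ is unrestricted — that is exactly why the proof is not entirely trivial and why the lumping must take place only on the $\bI{R}$ side. The one thing both arguments share as an implicit ingredient is recurrence of the $q_k$-chain on $\ZP$ (needed in your proof so that excursions return, and needed in the paper's just to have $\pi_k$ defined at all via Derman's theorem), so neither proof is more demanding than the other on that front.
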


We give an algebraic proof of Lemma~\ref{lem:finite-system}
at the end of this subsection; 
a probabilistic explanation goes via the embedded chain described in  Remark~\ref{rem:embedded-chain}.

Under the additional assumption~\eqref{eq:nearest-neighbour},
as well as a couple of further inessential assumptions, including aperiodicity, 
Theorem~\ref{thm:main-theorem} was obtained by Menshikov \& Petritis (MP) in~\cite{MenPet04}.
The model of~\cite{MenPet04} is formulated a little differently, however, and some translation is needed, as we explain. MP consider a random walk $W = (W_n, n \in \ZP)$ on the quadrant $\ZP^2$
in which the walker also carries an \emph{internal state}
with values in the finite set $A := \{0,\ldots,L\}$, $L \in \ZP$;
the internal state is $0$ unless the walker is at the boundary,
in which case its internal state can be ``excited''.
The model can be represented as a Markov chain on state space 
$\bbW := \bbWint \cup \bbWbx \cup \bbWby \cup \bbWc$, where
\[ \bbWint :=  \N^2 \times \{ 0 \}, ~
\bbWbx := \N \times \{ 0 \} \times A, ~
\bbWby := \{ 0\} \times \N \times A, ~
\bbWc :=  \{ \0 \} \times A .\]
Thus $(w_1, w_2, \alpha ) \in \bbW$
is either of the form $(w_1, w_2, 0)$
for $w_1 w_2  \geq 1$ (in the interior) or
$(w_1, w_2, \alpha)$
for $\alpha \in A$ and $w_1 w_2 = 0$ (at the boundaries).
Partial homogeneity is assumed in the sense that, given $W_n = w$,
the law of $W_{n+1} - W_n$ depends only on which region $\bbWint, \bbWbx, \bbWby, \bbWc$
contains~$w$.
Furthermore, it is assumed in~\cite{MenPet04} that the increments of each of the two spatial coordinates take values in $\{-1\} \cup \ZP$; 
MP call this assumption ``lower boundedness'', and we will see that it
translates to the left-continuity assumption~\eqref{eq:nearest-neighbour}.

We show how, with a slight change of variables, the model of~\cite{MenPet04}
fits into our framework. We define the map $\Upsilon : \bbW \to \ZP^2$
by setting 
\begin{equation}
\label{eq:men-pet-map}
\Upsilon (w_1,w_2,\alpha) :=
\begin{cases}
(L+w_1,L+w_2) &\text{ if } w_1 w_2 \geq 1; \\
(L - \alpha,L+w_2) &\text{ if } w_1 =0, w_2 \geq 1; \\
(L+w_1 ,L - \alpha) &\text{ if } w_1 \geq 1, w_2 =0; \\
(L-\alpha, L-\alpha) &\text{ if } w_1 = w_2 = 0.
\end{cases}
\end{equation}
Then $Z = (Z_n, n \in \ZP)$ defined by $Z_n := \Upsilon (W_n)$ is a Markov chain on $\Rq := \ZP^2$,
and it satisfies the partial homogeneity condition~\aref{hyp:partial_homogeneity}
with $R = L$ and the identification of regions
$\Rqint = \Upsilon ( \bbWint )$, $\Rqbx = \Upsilon (\bbWbx)$,
$\Rqby = \Upsilon (\bbWby)$, and $\Upsilon (\bbWc) \subseteq \Rqc = \I{L} \times \I{L}$ (only the ``diagonal'' states in the corner are accessible). If $W$ is irreducible, then $\Upsilon (W)$ is irreducible only on $\Upsilon (\bbW) \subseteq \Rq$, rather than all of $\Rq$ (due to the restriction on corner states), but nevertheless, under very mild conditions, hypothesis~\aref{hyp:irreducible} is satisfied.

\begin{proof}[Proof of Lemma~\ref{lem:finite-system}.]
     Consider first $j \in \I{R-1}$. From~\eqref{eq:nearest-neighbour} it follows that
     $q_k (i,j) = 0$ if $i \in \bI{R}$ and~$i - j > 1$, and so
     \begin{align*}
         \sum_{i \in \I{R}} \pi_k (i) \barq_k ( i,j) & = \sum_{i \in \ZP} \pi_k (i) q_k (i, j)  = \pi_k (j) ,
     \end{align*}
     by the invariance of~$\pi_k$
     as at~\eqref{eq:invariant-pi-def}.
     Similarly, for $j = R -1 \in \I{R}$,
          \begin{align*}
         \sum_{i \in \I{R}} \pi_k (i) \barq_k ( i, R-1) & =  
          \sum_{i \in \I{R}} \pi_k (i) {\sum_{\ell \in \bI{R-1}}} q_k (i, \ell) \\
         & =  \sum_{i \in \I{R}}  { \pi_k (i)}\biggl[ 1 - \sum_{\ell \in \I{R-1}} q_k (i,\ell) \biggr] \\
         & = 1 - \sum_{\ell \in \I{R-1}}  \sum_{i \in \ZP}  \pi_k (i)  q_k (i,\ell) \\
         & = 1 - \pi_k ( \I{R-1} ) = \pi_k (R-1) ,
     \end{align*}
     as required.
 \end{proof}

\subsection{Multidimensional Lindley processes and related walks}
\label{sec:lindley}

In this section, we derive Theorem~\ref{thm:lindley} as
a consequence of our main result, Theorem~\ref{thm:main-theorem}. Indeed, the special nature of the reflections for the two processes $M$ and $L$ means that the result takes a particularly simple form, in which $\pi$ plays no apparent role; this is due to the following 
fact.

\begin{lemma}
    \label{lem:lindley-drifts}
    For both the Lindley random walk
    and the mirror-reflected walk,  
the boundary drifts
$\bmu_k$ have $\bmu_k \neq 0$, $k \in \{1,2\}$, and satisfy  $\bmu_1 = \| \bmu_1 \| e_2$ and $\bmu_2 = \| \bmu_2 \|e_1$.
\end{lemma}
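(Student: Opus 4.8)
The plan is to exploit the fact that, for both models, the reflection at a given boundary only ever modifies the coordinate \emph{perpendicular} to that boundary, while the coordinate parallel to the boundary evolves freely with the (centred) increment $\zeta$. This forces each one-step boundary drift $\mu_k(i)$ — and hence its $\pi_k$-average $\bmu_k$ — to be perpendicular to the boundary, after which a short non-degeneracy argument pins down its sign.

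First I would fix the region parameter $R$ in the partial-homogeneity decomposition to be the constant $R$ appearing in~\aref{hyp:lindley-increments} (note $R \geq 1$, since $R = 0$ would force $\zeta = \0$ a.s., contradicting $\det\Sigma > 0$). For any state $z = (x, i) \in \Rqbx$ we then have $x \geq R$ and $e_1^\tra \zeta \geq -R$ a.s., hence $x + e_1^\tra \zeta \geq 0$ a.s.; consequently the Lindley update reads $L_{n+1} = (x + e_1^\tra\zeta,\ (i + e_2^\tra\zeta)^+)$ and the mirror update reads $M_{n+1} = (x + e_1^\tra\zeta,\ |i + e_2^\tra\zeta|)$. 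In particular the transition law at $(x,i)$ depends only on $i$ (confirming the form of $p_1(i;\,\cdot\,)$), and, using $\Exp\zeta = \0$,
\[
\mu_1(i) = \bigl( \Exp[e_1^\tra\zeta],\ \psi(i) - i \bigr) = \bigl( 0,\ \psi(i) - i \bigr), \qquad i \in \I{R},
\]
where $\psi(i) := \Exp[(i + e_2^\tra\zeta)^+]$ in the Lindley case and $\psi(i) := \Exp[\,|i + e_2^\tra\zeta|\,]$ in the mirror case. Averaging over $\pi_1$ gives $\bmu_1 = \bigl(0,\ \sum_{i\in\I{R}}\pi_1(i)(\psi(i)-i)\bigr) \in \R e_2$; interchanging the roles of the two coordinates (on $\Rqby$ it is the $y$-coordinate that is never truncated) gives $\bmu_2 \in \R e_1$ by the same computation.

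It then remains to show that $e_2^\tra\bmu_1 > 0$ (and, symmetrically, $e_1^\tra\bmu_2 > 0$). For every $i \in \I{R}$ one has $\psi(i) \geq i$: in the Lindley case because $(i + e_2^\tra\zeta)^+ \geq i + e_2^\tra\zeta$ pointwise, and in the mirror case by the triangle/Jensen inequality, $\Exp|i + e_2^\tra\zeta| \geq |\Exp(i + e_2^\tra\zeta)| = i$. Hence $e_2^\tra\bmu_1 = \sum_i \pi_1(i)(\psi(i) - i) \geq 0$. For strictness I would use $i = 0$: here $\psi(0) = \Exp[(e_2^\tra\zeta)^+]$ (Lindley) or $\Exp|e_2^\tra\zeta|$ (mirror), and both are strictly positive because $\Exp[(e_2^\tra\zeta)^2] = \Sigma_{22} > 0$ by~\aref{hyp:lindley-increments} makes $e_2^\tra\zeta$ non-degenerate, whence (having mean zero) $\Pr(e_2^\tra\zeta > 0) > 0$. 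Combining this with $\pi_1(0) > 0$ — which holds since $\pi_1$ is a strictly positive measure on $\I{R}$ by Proposition~\ref{prop:projection-chains}\ref{prop:projection-chains-ii} — yields $e_2^\tra\bmu_1 \geq \pi_1(0)\,\psi(0) > 0$, i.e.\ $\bmu_1 = \|\bmu_1\|\,e_2 \neq \0$, and symmetrically $\bmu_2 = \|\bmu_2\|\,e_1 \neq \0$.

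I do not expect a genuine obstacle here; the only point deserving a little care is the observation that the region parameter $R$ may be taken equal to the constant in~\aref{hyp:lindley-increments}, which is precisely what guarantees that the coordinate parallel to each boundary never reflects inside $\Rqbx$ or $\Rqby$. Once that is in place, the argument reduces to the elementary inequalities above together with the already-established positivity of $\pi_k$.
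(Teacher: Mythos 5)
Your proof is correct and follows essentially the same route as the paper's: observe that $x \geq R$ and $e_1^\tra\zeta \geq -R$ make the coordinate parallel to each boundary unaffected by the reflection, so $\mu_k(i)$ is perpendicular to the boundary; note $\psi(i) \geq i$ pointwise; exhibit one state with strict inequality; and invoke the strict positivity of $\pi_k$. The one place you differ is the strictness step: the paper locates an $r \geq 1$ with $\Pr(e_2^\tra\zeta = -r) > 0$ and uses state $r-1$, whereas you use state $0$ directly and observe that $\psi(0) = \Exp|e_2^\tra\zeta| > 0$ (mirror case) or $\psi(0) = \Exp[(e_2^\tra\zeta)^+] > 0$ (Lindley case) by non-degeneracy of $e_2^\tra\zeta$; your version is marginally cleaner but is the same idea.
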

\begin{proof}
First consider the mirror-reflected walk. 
Take $z = (x,y) \in \Rqbx$. We consider
\[
\mu_1 (y) = \Exp  | z + \zeta | - z . 
\]
Since $x \geq R$, $x + e_1^\tra \zeta >0$, a.s., so $e_1^\tra \mu_1 (y) =  \Exp e_1^\tra \zeta = 0$. On the other hand, suppose $r \in \{1,2,\ldots, R-1\}$ is such that $\Pr ( e_2^\tra \zeta = - r ) > 0$ (some such $r$ must exist, since $\Exp e_2^\tra \zeta =0$ and $\Sigma$ is positive definite). Then $r-1 \in \I{R}$ and 
$e_2^\tra \mu_1 ( r -1) > 0$.  
Thus $\bmu_1$ is a weighted sum of vectors all of which have $0$ component in the $e_1$ direction, non-negative components in the $e_2$ direction, and at least one strictly positive component in the $e_2$ direction. Hence $\bmu_1 = \| \bmu_1 \| e_2$. An analogous argument holds  for $\bmu_2$.

For the Lindley walk,   for $z = (x,y) \in \Rqbx$ one instead considers
\[
\mu_1 (y) = \Exp \bigl[ ( z + \zeta )^+ \bigr] - z , \]
to which a very similar argument applies.
\end{proof}

\begin{proof}[Proof of Theorem~\ref{thm:lindley}]
Suppose that~\aref{hyp:lindley-increments} holds, and that $Z$ is either the Lindley walk $L$ or the mirror-reflected walk $M$. For the constant $R$ from
the condition $\Pr ( e_k^\tra \zeta \geq -R) =1$ in~\aref{hyp:lindley-increments}, define the regions $\Rqint$, $\Rqbx$, $\Rqby$ as in \S\ref{sec:partial_homogeneity}.
For $z \in \Rqint$, we then have $\Pr ( z + \zeta \in \Rq ) = 1$, meaning that $\Pr ( Z_1 - Z_0 = y \mid Z_0 = z) = \Pr ( \zeta = y)$ for all $z \in \Rqint$, $y \in \Z^2$. 
In particular, it follows from the assumptions that $\Exp \zeta = \0$ and positive-definiteness of $\Sigma$ from~\aref{hyp:lindley-increments} that~\aref{hyp:zero_drift} and~\aref{hyp:full_rank} are satisfied.
Moreover, since both $\| | z+ \zeta| - z \| \leq \| \zeta \|$ and $\| (z+ \zeta)^+ - z\| \leq \| \zeta \|$ for all $z, \zeta \in \Z^2$, we have, for any $\nu >0$, 
\begin{equation}
    \label{eq:lindley-bound}
\Exp [ \| Z_1 - Z_0 \|^\nu \mid Z_0 = z] \leq \Exp [ \| \zeta \|^\nu ], \text{ for all } z \in \Rq.\end{equation}

The condition~\aref{hyp:irreducible} is a consequence of the 
assumed irreducibility
together with the fact that $\Exp \zeta = \0$ and positive-definiteness of $\Sigma$, which imply uniform ellipticity in $\Rqint$ (cf.~Remark~\ref{rem:moments}\ref{rem:moments-iii}).
We show that the partial homogeneity hypothesis~\aref{hyp:partial_homogeneity} is satisfied. To do so, consider the corresponding transition laws (in particular, started from $\Rqb$): see Figure~\ref{fig:full} for an illustration of the reflection mechanism in each case. One verifies that the mirror-reflected walk  satisfies~\aref{hyp:partial_homogeneity}, with $R$ the constant inherited from~\aref{hyp:lindley-increments},
and with $\pint, p_1, p_2$ defined for $z=(z_1,z_2) \in \Rq$ by
\begin{align*}
    \pint (z) & = \Pr ( \zeta = z ) ; \\
    p_1 (y ; z) & =   \Pr ( \zeta = z ) \Ind { y + z_2 \geq 0} +   \Pr ( \zeta = ( z_1 , - 2y - z_2) )  \Ind { y + z_2 > 0} ; \\
     p_2 (x ; z) & = \Pr ( \zeta = z ) \Ind { x + z_1 \geq 0} +   \Pr ( \zeta = ( -2x - z_1 , z_2) )  \Ind { x + z_1 > 0 } ;
\end{align*}
see the left-hand panel in Figure~\ref{fig:full}.
This shows that $Z = M$ satisfies~\aref{hyp:partial_homogeneity}, and then~\eqref{eq:lindley-bound} together with the assumption $\Exp [ \| \zeta \|^\nu ] < \infty$ from~\aref{hyp:lindley-increments} shows that~\aref{hyp:moments} holds. 

For the Lindley walk, the considerations are similar, but now we verify that~\aref{hyp:partial_homogeneity} holds with the identification of $p_1, p_2$ through 
\begin{align*}
    p_1 (y ; z) & =   \Pr ( \zeta = z ) \Ind { y + z_2 \geq 0} +   \sum_{w \in \Z \colon y + w < 0} \Pr ( \zeta = ( z_1 , w) )  \Ind { y + z_2 = 0} ; \\
     p_2 (x ; z) & = \Pr ( \zeta = z ) \Ind { x + z_1 \geq 0} +   \sum_{w \in \Z \colon x + w < 0}  \Pr ( \zeta = (w , z_2) )  \Ind { x + z_1 = 0 } ;
\end{align*}
see the right-hand panel in Figure~\ref{fig:full}.

We have thus verified all the hypotheses of Theorem~\ref{thm:main-theorem}.
Moreover, Lemma~\ref{lem:lindley-drifts} shows that, in both cases,
 the reflection angles are orthogonal, i.e.,
 $\bmu_1$ is in direction $e_2$ and $\bmu_2$ is in direction $e_1$. Thus 
 (see Example~\ref{ex:orthogonal}) it holds that $\varphi_1 = \varphi_2 = \varphi_0 - \frac{\pi}{2}$,
 and $\chi = 2 - \frac{\pi}{\arccos (-\rho)}$. Hence $\chi >0$ is equivalent to $\rho >0$, while $\chi <0$ is equivalent to $\chi <0$: thus Theorem~\ref{thm:main-theorem}\ref{thm:main-theorem-i} establishes the recurrence/transience classification given in Theorem~\ref{thm:lindley}. Finally, if $\rho >0$ (hence $\chi >0$) then Theorem~\ref{thm:main-theorem}\ref{thm:main-theorem-ii}--\ref{thm:main-theorem-iii}
 yields~\eqref{eq:tail-Lindley}, following the formulation in Remark~\ref{rem:moments}\ref{rem:moments-ii}.
\end{proof}

\begin{figure}[!h]
  \centering
  \begin{subfigure}[b]{6.5cm}
    \centering
    \begin{tikzpicture}
        \draw[->] (0,0) --(0,4){};
        \draw[->] (0,0) --(5,0){};
        \draw[dashed] (0,1) -- (5,1) {};
        \node at (-.5,1) {$y$};
        \draw (1,1) circle (8pt);
        \draw (2.8,1.7) rectangle ++ (16pt,16pt);
        \draw[<->] (.5,1) --(.5,2);
        \draw[<->] (1,2) --(3,2);
        \node[isosceles triangle,
        isosceles triangle apex angle=60,
        draw,
        rotate=-30,
        minimum size =1pt] (T1) at (3,-2) {};
        \node at (.75,1.6) {$z_2$};
        \node at (1.75,2.3) {$z_1$};
        \draw[<->] (1.5,1)--(1.5,-2);
        \node at (2.5,-1) {$-2y - z_2$};
    \end{tikzpicture}
    \caption{Mirror-reflected random walk.}
    \label{fig:reflected-jump-increment}
  \end{subfigure}
  \hfill
  \begin{subfigure}[b]{6.5 cm}
    \centering
    \begin{tikzpicture}
        \draw[->] (0,0) --(0,4){};
        \draw[->] (0,0) --(5,0){};
        \draw[dashed] (0,1) -- (5,1) {};
        \node at (-.5,1) {$y$};
        \draw (1,1) circle (8pt);
        \draw (2.7,-8 pt) rectangle ++ (16pt,16pt);
        \draw[->, dotted, line width = 0.6mm] (3,-1.5) --(3,-8pt);
        \draw[->] (1.15,.8) --(3,-2);
        \draw[->] (1.15,.8) --(3,-1.7);
    \end{tikzpicture}
    \caption{Lindley random walk.}
    \label{fig:Lindley-2}
  \end{subfigure}
 \caption{An illustration of partial homogeneity and the identification of the boundary transition function $p_1(y;z)$ for the increment started from $(x,y) \in \Rqbx$, for the mirror-reflected walk~$M$ (\emph{left}) and the Lindley walk~$L$ (\emph{right}). For~$M$, the probability of an increment of size $(z_1,z_2)$ is the sum of the probability of the direct jump from the circle to the square plus the probability of the reflected jump from the circle to the triangle. For~$L$, the probability to make an increment of $(z_1,-y)$ includes all increments $(z_1,w)$, with $w$ such that $w + y \leq 0$, the action of taking the positive part of $y+w$ represented by the dotted line.}
  \label{fig:full}  
\end{figure}
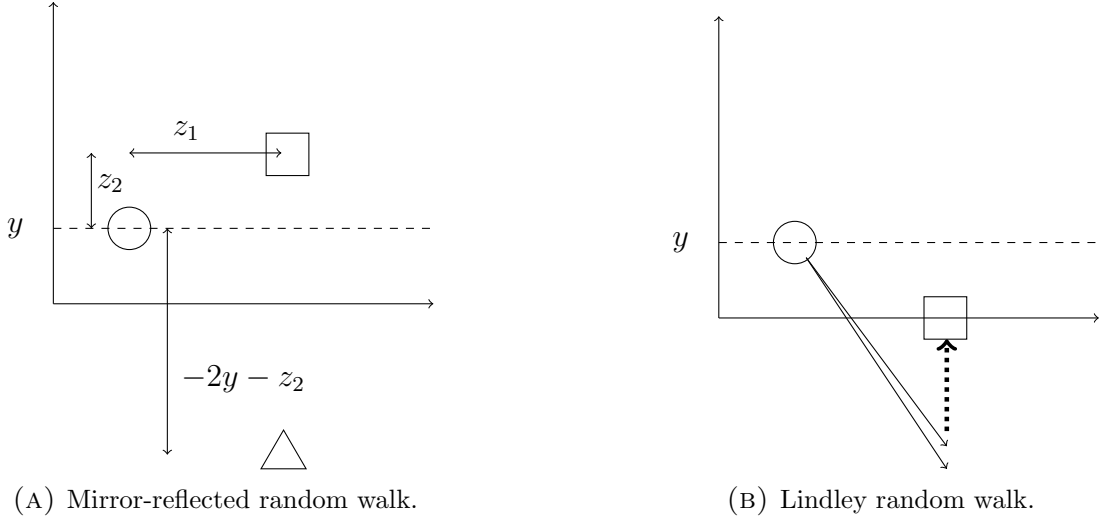

\appendix

\section{Foster--Lyapunov conditions}
\label{sec:foster-lyapunov}

In this section, $Z = (Z_n, n \in \ZP)$ will denote a Markov chain on a countable
state space $\bb{X}$,
and passage times to sets $A \subseteq \bb{X}$ will be denoted by $\tau_Z(A)$ as at~\eqref{eq:def-passage-time}.
For a function  $f: \bb{X} \to \RP$, we write $f \to \infty$
to mean that for every $r \in \RP$,  $\{z \in \bb{X} : f(z) \leq r\}$ is finite.
The following Foster--Lyapunov conditions for recurrence/transience
of countable Markov chains are standard, and can be found in e.g.~Theorems~2.5.2 and~2.5.8 of~\cite[\S 2.5]{Bluebook}. 

\begin{theorem}[Recurrence/transience]
\label{thm:recurrence}
 Let $(Z_n,n \in\ZP)$ be an irreducible Markov chain on a countable state space $\bb{X}$, and suppose that there exist a function
 $f: \bb{X} \to \RP$ and a
  set $\Lambda \subset \bb{X}$ such that
   \begin{equation}
   \label{eq:supermart-f}
   \Exp \bigl[ f(Z_{n+1})- f(Z_n) \mid Z_n = z \bigr] \leq 0, \text{ for all } z \in 
    \bb{X} \setminus \Lambda . \end{equation}
 Then the following apply.
   \begin{enumerate}[label=(\roman*)]
   \item 
   \label{thm:recurrence-i}
   If~$\Lambda$ is finite and $f \to \infty$, then $Z$ is recurrent.
   \item
   \label{thm:recurrence-ii}
   If~$\Lambda \neq \emptyset$ and there exists $z \in \bb{X} \setminus \Lambda$ with $f(z) < \inf_{w \in \Lambda} f(w)$, then $Z$ is transient.
   \end{enumerate}
\end{theorem}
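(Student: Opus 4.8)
This is a classical Foster--Lyapunov dichotomy, and I would give the standard martingale proof (as in~\cite[\S2.5]{Bluebook}). The common device for both parts is the following: fix $z \in \bb{X} \setminus \Lambda$, write $\tau := \tau_Z(\Lambda)$ for the hitting time of $\Lambda$, and consider the stopped process $M_n := f(Z_{n\wedge\tau})$. On the event $\{\tau > n\} \in \cF_n$ the state $Z_n$ lies in $\bb{X}\setminus\Lambda$, so~\eqref{eq:supermart-f} gives $\Exp[f(Z_{n+1}) \mid \cF_n] \leq f(Z_n)$ there, while on $\{\tau \leq n\}$ the process $M$ is already frozen; hence $M$ is a non-negative supermartingale. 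Consequently $M_n \to M_\infty$ a.s.\ with $M_\infty$ finite, and $\Exp[M_n] \leq f(z)$ for every $n$. (In part~\ref{thm:recurrence-i} we may assume $\Lambda \neq \emptyset$; enlarging $\Lambda$ to a singleton only weakens~\eqref{eq:supermart-f}.)

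\emph{Part~\ref{thm:recurrence-i}.} Suppose for contradiction that $\Pr_z(\tau = \infty) > 0$ for some $z \notin \Lambda$. On $\{\tau = \infty\}$ we have $M_n = f(Z_n)$, which therefore converges and so is bounded along the trajectory; since $f \to \infty$, this means $(Z_n)$ stays forever in the finite sublevel set $\{f \leq S\}$ for some (random) $S < \infty$, so some state $y \notin \Lambda$ is visited infinitely often. Decomposing over the countably many possible values of $y$, there is a fixed $y \notin \Lambda$ with $\Pr_z(\tau = \infty, \ Z_n = y \text{ i.o.}) > 0$. By irreducibility there are $k \in \N$ and $\veps > 0$ with $\Pr_y(\tau_Z(\Lambda) \leq k) \geq \veps$; applying the strong Markov property along a subsequence of return times to $y$ spaced at least $k$ apart shows that, on $\{Z_n = y \text{ i.o.}\}$, the chain hits $\Lambda$ almost surely --- contradicting $\{\tau = \infty\}$. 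Hence $\Pr_z(\tau_Z(\Lambda) < \infty) = 1$ for every $z$; since $\Lambda$ is finite, it is then visited infinitely often from every starting point, so some state of $\Lambda$ is recurrent, and by the solidarity property of irreducible chains $Z$ is recurrent.

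\emph{Part~\ref{thm:recurrence-ii}.} Let $z^\star \in \bb{X}\setminus\Lambda$ satisfy $f(z^\star) < m := \inf_{w\in\Lambda} f(w)$, and start from $z^\star$, so $\tau = \tau_Z(\Lambda) \geq 1$. Since $M_n = f(Z_{n\wedge\tau}) \geq f(Z_\tau)\Ind{\tau \leq n}$ and $f \geq 0$, the quantities $\Exp[f(Z_\tau)\Ind{\tau\leq n}]$ are non-decreasing in $n$, so monotone convergence gives
\[
f(z^\star) \;\geq\; \Exp[M_n] \;\geq\; \Exp\bigl[f(Z_\tau)\Ind{\tau\leq n}\bigr] \;\uparrow\; \Exp\bigl[f(Z_\tau)\Ind{\tau<\infty}\bigr] \;\geq\; m\,\Pr_{z^\star}(\tau<\infty),
\]
where the last step uses $f(Z_\tau) \geq m$ on $\{\tau<\infty\}$. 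Since $0 \leq f(z^\star) < m$ forces $m > 0$, we conclude $\Pr_{z^\star}(\tau < \infty) \leq f(z^\star)/m < 1$. Picking any $z_0 \in \Lambda$, this yields $\Pr_{z^\star}(\tau_Z(\{z_0\}) = \infty) \geq \Pr_{z^\star}(\tau < \infty)^{\mathrm c}>0$ (using $\tau_Z(\{z_0\}) \geq \tau$), and by the dichotomy for irreducible chains --- a recurrent chain has $\Pr_x(\tau_Z(\{y\})<\infty)=1$ for all $x,y$ --- it follows that $Z$ is transient.

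The one genuinely delicate step is in part~\ref{thm:recurrence-i}: turning the a.s.\ convergence of the supermartingale on $\{\tau=\infty\}$ into the statement that $\Lambda$ is actually hit, which requires dealing with the random confining level $S$ and a careful application of the strong Markov property along a suitably thinned sequence of return times. Everything else is routine bookkeeping with the supermartingale convergence and optional stopping theorems together with elementary consequences of irreducibility.
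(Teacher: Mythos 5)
Your proof is correct. The paper does not give its own proof of this statement but refers to Theorems~2.5.2 and~2.5.8 of \cite{Bluebook}; your argument — viewing $f(Z_{n\wedge\tau_Z(\Lambda)})$ as a non-negative supermartingale, deducing a.s.\ convergence and $\Exp[M_n]\leq f(z)$, then using $f\to\infty$ together with a thinned strong-Markov/Borel--Cantelli step in part~(i) and the $f(Z_\tau)\geq m$ optional-stopping bound in part~(ii) — is precisely the standard proof given there.
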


Theorem~\ref{thm:moments-upper-bound} gives upper bounds 
on tails of passage times, and can be seen as an extension of Foster's criterion for integrability
of passage times; it is contained in a result of~\cite[\S 2.7]{Bluebook}, whose
origins go back to Lamperti~\cite{Lamp63} and Aspandiiarov \emph{et al.}~\cite{AspIasMen96}.

\begin{theorem}[Upper tail bounds]
\label{thm:moments-upper-bound}
 Let $(Z_n,n \in\ZP)$ be a Markov chain on $\bb{X}$.
 Suppose that there exist 
 $f: \bb{X} \to (0,\infty)$ with $f \to \infty$,  
 constants $\eta < 1$, $c >0$, and a
    set $\Lambda \subset \bb{X}$ such that
   \begin{equation}
   \label{eq:strong-supermart-f}
   \Exp \bigl[ f(Z_{n+1})- f(Z_n) \mid Z_n = z \bigr] \leq - c f (z)^\eta, \text{ for all } z
     \in \bb{X} \setminus \Lambda . \end{equation}
       Then for any $\gamma \in [0,\frac{1}{1-\eta})$, $\Exp [ \tau_Z(\Lambda)^\gamma ] < \infty$.
  \end{theorem}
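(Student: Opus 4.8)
Looking at Theorem~\ref{thm:moments-upper-bound}, this is a well-known Foster--Lyapunov-type result; the plan is to reduce it to the standard Lamperti-type statement from \cite[\S 2.7]{Bluebook} by exhibiting an appropriate supermartingale obtained from a concave transformation of $f$.

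\textbf{The approach.} The plan is to set $g := f^{1-\eta}$ (noting $1-\eta \in (0,1]$, so $g$ is a nonnegative concave-type power of $f$), and show that $g$ is, up to an additive constant drift term, a supermartingale outside $\Lambda$; more precisely that $\Exp[g(Z_{n+1}) - g(Z_n) \mid Z_n = z] \leq -c'$ for $z \in \bb{X} \setminus \Lambda$ and some $c' > 0$. Granting such a bound, the finiteness $\Exp[\tau_Z(\Lambda)^\gamma] < \infty$ for $\gamma < \frac{1}{1-\eta}$ follows: the drift of $g$ is bounded above by a negative constant, which by the standard submartingale/optional-stopping argument (or directly from the maximal version, \cite[\S 2.7]{Bluebook}) gives $\Exp_z[\tau_Z(\Lambda)] < \infty$ with a linear bound in $g(z)$, and the full moment statement for $\gamma$ up to $\frac{1}{1-\eta} = \frac{1}{1-\eta}$ comes from iterating this with the scaling $g = f^{1-\eta}$ inside the cited theorem's hypotheses, which are phrased exactly in terms of a drift condition $\Exp[f(Z_{n+1}) - f(Z_n) \mid Z_n = z] \leq -cf(z)^\eta$.

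\textbf{Key steps in order.} First I would recall the precise statement of the result from \cite[\S 2.7]{Bluebook} that we are invoking; in fact the cleanest route is to observe that the hypothesis~\eqref{eq:strong-supermart-f} is \emph{verbatim} the hypothesis of that cited theorem (with exponent $\eta < 1$), so the conclusion $\Exp[\tau_Z(\Lambda)^\gamma] < \infty$ for $\gamma \in [0, \frac{1}{1-\eta})$ is immediate by citation. If instead a self-contained derivation is wanted, the steps are: (i) verify by Jensen/concavity that for $a \geq 0$, $b$ with $a + b \geq 0$, one has $(a+b)^{1-\eta} - a^{1-\eta} \leq (1-\eta) a^{-\eta} b$ when $b$ is not too large, with an error term controlled by the second-order behaviour; (ii) take conditional expectations at $Z_n = z$, use~\eqref{eq:strong-supermart-f} to bound the resulting linear term by $-c(1-\eta)$, and absorb the error; (iii) conclude that $g(Z_{n \wedge \tau_Z(\Lambda)}) + c'(n \wedge \tau_Z(\Lambda))$ is a supermartingale, hence $\Exp_z[\tau_Z(\Lambda)] \leq g(z)/c'$; (iv) bootstrap to higher moments via the standard inductive argument (or the maximal-inequality argument of \cite{AspIasMen96}), giving $\Exp_z[\tau_Z(\Lambda)^\gamma] < \infty$ for $\gamma < \frac{1}{1-\eta}$.

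\textbf{Main obstacle.} Since~\eqref{eq:strong-supermart-f} matches the hypothesis of the cited theorem exactly, there is essentially no obstacle beyond bookkeeping: the substance is already packaged in \cite[\S 2.7]{Bluebook} (ultimately Lamperti~\cite{Lamp63} and Aspandiiarov--Iasnogorodski--Menshikov~\cite{AspIasMen96}). The only point requiring a little care, were one to reprove it, is controlling the second-order error term in the concave Taylor estimate of step~(i) uniformly in $z$: this needs either an a~priori bound on $\Exp[(f(Z_{n+1}) - f(Z_n))^2 \mid Z_n]$ or, as in the cited references, a more delicate argument handling large increments separately. In the present application (the proof of Theorem~\ref{thm:main-theorem}\ref{thm:main-theorem-ii}) $f = h^\alpha$ and such control is available, but for the general statement one simply invokes the theorem from \cite{Bluebook} as a black box, so I would write the proof as a one-line citation.

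\begin{proof}[Proof of Theorem~\ref{thm:moments-upper-bound}]
The drift condition~\eqref{eq:strong-supermart-f} is precisely the hypothesis of the result in~\cite[\S 2.7]{Bluebook} (see also Lamperti~\cite{Lamp63} and Aspandiiarov \emph{et al.}~\cite{AspIasMen96}), from which the conclusion follows directly. For completeness we sketch the argument. Set $g := f^{1-\eta}$, which is well defined and positive since $f > 0$ and $1-\eta \in (0,1]$, and $g \to \infty$ since $f \to \infty$. Using concavity of $t \mapsto t^{1-\eta}$ on $(0,\infty)$, for $z \in \bb{X} \setminus \Lambda$ one obtains, after taking conditional expectations and inserting~\eqref{eq:strong-supermart-f},
\[
\Exp \bigl[ g(Z_{n+1}) - g(Z_n) \mid Z_n = z \bigr] \leq (1-\eta) f(z)^{-\eta} \, \Exp \bigl[ f(Z_{n+1}) - f(Z_n) \mid Z_n = z \bigr] + (\text{error}) \leq -c',
\]
for some $c' > 0$, where the error term arising from the second-order part of the expansion is absorbed; this step is where one uses, as in the cited references, a separate treatment of large increments. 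Consequently $g(Z_{n \wedge \tau_Z(\Lambda)}) + c' (n \wedge \tau_Z(\Lambda))$ is a nonnegative supermartingale, so by optional stopping $\Exp_z [ \tau_Z(\Lambda) ] \leq g(z)/c' < \infty$ for all $z$. An inductive bootstrap over powers, exactly as in~\cite[\S 2.7]{Bluebook}, upgrades this to $\Exp_z [ \tau_Z(\Lambda)^\gamma ] < \infty$ for every $\gamma \in [0, \tfrac{1}{1-\eta})$.
\end{proof}
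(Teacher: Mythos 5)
Your proposal is correct and takes the same route as the paper: the paper's own proof is a one-line citation to Corollary~2.7.3 of~\cite[p.~70]{Bluebook} applied to $X_n = f(Z_n)$, which is precisely what you observe. A couple of small inaccuracies in your ``completeness sketch'' are worth flagging, though neither affects the proof, since the citation does all the work: (a)~the claim that the concave step leaves a ``second-order error'' to be absorbed is spurious --- for $\eta \in (0,1)$ the pointwise concavity inequality $f(Z_{n+1})^{1-\eta} - f(Z_n)^{1-\eta} \leq (1-\eta) f(Z_n)^{-\eta} \bigl( f(Z_{n+1}) - f(Z_n) \bigr)$ holds exactly, with no remainder and no need to treat large increments separately at that stage (that machinery is needed for the bootstrap to moments $\gamma > 1$, not for the linearization); and (b)~your parenthetical ``$1-\eta \in (0,1]$'' implicitly assumes $\eta \geq 0$, whereas the theorem allows any $\eta < 1$, and for $\eta < 0$ the map $t \mapsto t^{1-\eta}$ is convex, so the concavity argument does not apply as written. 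These points matter only if one tries to make the sketch rigorous; as a citation-based proof your argument is fine and coincides with the paper's.
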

\begin{proof}
Apply Corollary~2.7.3 of~\cite[pp.~70]{Bluebook} to the process $X_n = f(Z_n)$.
\end{proof}

Lower tail bounds are more delicate.
A general approach to lower bounds of passage times is developed in~\cite[\S\S2.7, 3.7, 3.8]{Bluebook}, and its origins can be traced in one form or another back to Doob and, primarily, Lamperti~\cite{Lamp63}.
 Theorem~\ref{thm:moments-lower-bound} below requires a Lyapunov function $f$ for which conditions~\eqref{eq:non-existence-1}--\eqref{eq:non-existence-2} hold, which can 
can be understood as requiring that $f(Z)$ can return from a distant level to near the origin
no faster than diffusively. 
If this is so, then a lower bound on the probability that an excursion of $f(Z)$ reaches level $n^{1/2}$ (say)
should provide a lower bound on the probability that the duration of the excursion is of order~$n$.
Theorem~\ref{thm:moments-lower-bound} makes this precise. 
In applications of Theorem~\ref{thm:moments-lower-bound}, one also needs
a lower bound on the probability of reaching a distant level during an excursion,
and this is often provided by an optional stopping argument: for example, if $f^\alpha (Z)$
is close to a martingale, then one would obtain a tail bound of the form  $\Pr ( \tau_Z(\Lambda) \geq n ) \gtrsim n^{-\alpha/2}$, roughly speaking (in our setting, \S\ref{sec:excursion-bounds} gives such a bound).

\begin{theorem}[Lower tail bounds]
\label{thm:moments-lower-bound}
 Let $(Z_n,n \in\ZP)$ be a Markov chain on $\bb{X}$.
Suppose that there exist 
 $f: \bb{X} \to (0,\infty)$ with $f \to \infty$,    constants $B \in \RP$, $c \in \R$, and a
    set $\Lambda \subset \bb{X}$ such that
   \begin{align}
   \label{eq:non-existence-1}
   \Exp \bigl[ ( f(Z_{n+1})- f(Z_n) )^2 \mid Z_n = z \bigr] & \leq  B, \text{ for all } z
     \in \bb{X} \setminus \Lambda ;\\
      \label{eq:non-existence-2}
      \Exp \bigl[  f(Z_{n+1})- f(Z_n)   \mid Z_n = z \bigr] & \geq  - \frac{c}{f(z)}, \text{ for all } z
     \in \bb{X} \setminus \Lambda .
     \end{align}
Then there exists $C \in (0,\infty)$ such that
\[ \Pr \bigl( \tau_Z(\Lambda) \geq n \bigr) \geq \frac{1}{2} \Pr \left( \max_{0 \leq m \leq \tau_Z(\Lambda) } f (Z_m) \geq C n^{1/2} \right) .\]
       \end{theorem}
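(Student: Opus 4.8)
The plan is to combine an optional-stopping argument for the submartingale built from $f(Z)^2$ with a reduction, via the strong Markov property, to a statement about a single excursion from a high level. First, write $\tau := \tau_Z(\Lambda)$ and $Y_m := f(Z_m)$, fix a constant $C$ to be chosen, put $a := Cn^{1/2}$, and let $\sigma := \inf\{m\ge 0 : Y_m\ge a\}$, so that $\{\max_{0\le m\le\tau}Y_m\ge a\} = \{\sigma\le\tau\}$. On $\{\sigma\le\tau\}$ the strong Markov property at $\sigma$ gives $\Pr(\tau\ge n\mid\cF_\sigma)\ge\Pr_{Z_\sigma}(\tau_Z(\Lambda)\ge n)$, whence
\[ \Pr(\tau\ge n) \ge \Exp\bigl[\Ind{\sigma\le\tau}\,\Pr_{Z_\sigma}(\tau_Z(\Lambda)\ge n)\bigr]; \]
so it will be enough to produce $C$, depending only on $B$, $c$ and $\Lambda$, such that $\Pr_{z'}(\tau_Z(\Lambda)\ge n)\ge\tfrac12$ whenever $f(z')\ge Cn^{1/2}$. (As is the case in the applications, I will also assume $\Lambda$ is a sublevel set $\{f\le x_0\}$ with $x_0\ge 1$; then on $\{m<\tau\}$ the hypotheses hold and $Y_m>x_0$, and entering $\Lambda$ forces $Y$ down to $[0,x_0]$.)

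For this core estimate the crucial observation is that~\eqref{eq:non-existence-1} controls both tails of the drift: by Cauchy--Schwarz $\Exp[(Y_{m+1}-Y_m)^+\mid\cF_m]\le B^{1/2}$ on $\{m<\tau\}$, so with $c^+:=c\vee 0$ and using~\eqref{eq:non-existence-2},
\[ \Exp[Y_{m+1}^2-Y_m^2\mid\cF_m] = \Exp[(Y_{m+1}-Y_m)^2\mid\cF_m] + 2Y_m\Exp[Y_{m+1}-Y_m\mid\cF_m] \ge -2c^+, \quad m<\tau, \]
so that $\bigl(Y_{m\wedge\tau}^2 + 2c^+(m\wedge\tau)\bigr)_{m\ge 0}$ is a submartingale. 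Starting from $z'$ with $b_0:=f(z')\ge Cn^{1/2}$, fix a small $\eta>0$, set $\rho:=\inf\{m:Y_m\ge(1+\eta)b_0\}$ and $T:=\tau\wedge\rho\wedge n$, and apply optional stopping at $T$ to get $b_0^2\le\Exp[Y_T^2]+2c^+n$. Splitting $\Exp[Y_T^2]$ over $\{T=\tau\}$ (where $Y_T\le x_0$), over $\{T=n\}$ (where $Y_T<(1+\eta)b_0$) and over $\{T=\rho\}$ (where $Y_{\rho-1}<(1+\eta)b_0$ and the overshoot satisfies $\Exp[(Y_\rho-Y_{\rho-1})^2\Ind{T=\rho}]\le Bn$, again by~\eqref{eq:non-existence-1}), and then using $b_0^2\ge C^2n$ to absorb the lower-order terms, one obtains: for every $\delta>0$ there are $\eta$ small and $C$ large for which $\Pr_{z'}(T=\tau)\le\delta$. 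In words, from a level $\ge Cn^{1/2}$ the chain is, with probability $\ge 1-\delta$, either to climb by a further factor $1+\eta$ or to exhaust its $n$ steps before meeting $\Lambda$.

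Finally, one removes the restriction to bounded excursions by iterating this estimate along the successive levels $(1+\eta)^jb_0$: on $\{T=\rho\}$ the chain is restarted from $Z_\rho$, which still has $f(Z_\rho)\ge Cn^{1/2}$, so the same bound applies, and a bootstrap over $j$ is used to upgrade $\Pr_{z'}(\tau_Z(\Lambda)\ge n)$ to $\ge\tfrac12$ once $C$ is large enough. The hard part will be exactly this last point: the hypotheses give no pathwise upper bound on $Y$ during an excursion, so the excursions to high levels must be absorbed by truncation and iteration, and it is for this that one needs the two-sided drift control coming from~\eqref{eq:non-existence-1}, not merely the one-sided bound~\eqref{eq:non-existence-2}; the overshoot bookkeeping is the delicate but otherwise routine part. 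The resulting argument is the Lamperti-type one of~\cite{Lamp63,AspIasMen96} and~\cite[\S\S2.7, 3.7, 3.8]{Bluebook}.
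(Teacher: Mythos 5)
Your strong-Markov reduction, the identification of the submartingale $f(Z_{m\wedge\tau})^2 + 2c^+(m\wedge\tau)$, and the one-step optional-stopping estimate at $T=\tau\wedge\rho\wedge n$ are correct; you are also right to flag that the statement needs $\Lambda$ to be contained in a sublevel set of $f$ (the paper's own proof is a one-line citation of~\cite[Lemma~2.7.7]{Bluebook}, so the comparison is really against that result). The genuine gap is in the final iteration. Decomposing $\Pr(\tau < n) = \sum_{j\ge 0}\Pr(\rho_j\le\tau<\rho_{j+1},\,\tau<n)$, where $\rho_j$ is the hitting time of level $(1+\eta)^j b_0$, and bounding each summand by $\delta\,\Pr(\rho_j<\tau\wedge n)$, yields $\Pr(\tau<n)\le\delta\,\Exp\bigl[\card\{j:\rho_j<\tau\wedge n\}\bigr]$. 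But the expected number of levels crossed before $\tau\wedge n$ need not be $O(1)$: condition~\eqref{eq:non-existence-1} only forces the one-step drift into $[-B^{1/2},B^{1/2}]$, so a nearly constant positive drift is admissible, under which $f(Z)$ can climb to order $B^{1/2}n$ (which dwarfs $Cn^{1/2}$ for large $n$) inside $n$ steps, crossing on the order of $\eta^{-1}\log n$ of your levels. Since the one-step estimate $\delta\approx 2\eta$ is level-independent, the iterated bound degenerates to something of order $\log n$, not $1/2$. The alternative bootstrap $g\le\delta+p\,g$ with $p:=\Pr(\rho<\tau\wedge n)$ does not close either, because $p$ can be arbitrarily close to $1$ in the same regime.

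What repairs the argument, and is, I believe, how the cited lemma actually proceeds, is a single Chebyshev estimate on the martingale part of $f(Z)$ itself (not of $f(Z)^2$), with no iteration over levels. Write $Y_m:=f(Z_m)$, $b_0:=Y_0\ge Cn^{1/2}$, and observe that, once $b_0>2x_0$, one has $\tau\ge\sigma:=\inf\{m:Y_m<b_0/2\}$. On $\{m<\sigma\}$ one has $Y_m\ge b_0/2$, so by~\eqref{eq:non-existence-2} the one-step drift is $\ge -2c^+/b_0$; in the Doob decomposition $Y_{m\wedge\sigma}-Y_0=A_m+M_m$, the compensator therefore satisfies $A_{\sigma\wedge n}\ge -2c^+n/b_0\ge -b_0/4$ once $C^2\ge 8c^+$, while the orthogonality of martingale increments and~\eqref{eq:non-existence-1} give $\Exp[M_n^2]\le Bn$. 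On $\{\sigma\le n\}$, $Y_\sigma-Y_0<-b_0/2$ thus forces $M_\sigma<-b_0/4$, and Chebyshev yields $\Pr(\sigma\le n)\le 16Bn/b_0^2\le 16B/C^2\le\tfrac12$ for $C$ large enough. This reads off precisely what the cap at $(1+\eta)b_0$ obscures: to reach $\Lambda$ the chain must cover distance of order $b_0$ in at most $n$ steps, no matter how high the excursion peaks, and that alone controls the probability.
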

\begin{proof}
Apply Lemma~2.7.7 of~\cite[pp.~76]{Bluebook} to the process $X_n = f(Z_n)$.
\end{proof}

\section*{Acknowledgements}
\addcontentsline{toc}{section}{Acknowledgements}

The authors are grateful for the constructive feedback from the anonymous referees. 
This work was supported by EPSRC grant EP/W00657X/1.
Revision of the paper was undertaken  during the programme ``Stochastic systems for anomalous diffusion'' (July--December 2024) hosted by the  Isaac Newton Institute, under EPSRC grant EP/Z000580/1.

\bibliographystyle{amsplain}

\begin{thebibliography}{99}
\addcontentsline{toc}{section}{References}

\bibitem{aiSPA}
S.~Aspandiiarov and R.~Iasnogorodski, \emph{Tails of passage-times and an
  application to stochastic processes with boundary reflection in wedges},
  Stochastic Process. Appl. \textbf{66} (1997), no.~1, 115--145. 
  \MR{1431874}.

\bibitem{aiTPA}
S.~Aspandiiarov and R.~Iasnogorodski, \emph{General criteria of integrability of functions of passage-times
  for non-negative stochastic processes and their applications}, Theory Probab.
  Appl. \textbf{43} (1999), no.~3, 343--369, Reprinted from \emph{Teor.\
  Veroyatnost.\ i Primenen.}\ {\bf 43} (1998), 509--539 (English, Russian
  summary). 
  \MR{1681072}.

\bibitem{AspIasMen96}
S.~Aspandiiarov, R.~Iasnogorodski, and M.~Menshikov, \emph{Passage-time moments
  for nonnegative stochastic processes and an application to reflected random
  walks in a quadrant}, Ann. Probab. \textbf{24} (1996), no.~2, 932--960.
  \MR{1404537}.

\bibitem{afm}
I.~M. Asymont, G.~Fayolle, and M.~V. Menshikov, \emph{Random walks in a quarter plane with zero drifts: transience and recurrence}, J. Appl. Probab.
  \textbf{32} (1995), no.~4, 941--955.
  \MR{1363336}.

\bibitem{borovkov1991}
A.~A. Borovkov, \emph{Lyapunov functions and the ergodicity of multidimensional
  {M}arkov chains}, Theory Probab. Appl. \textbf{36} (1992), no.~1, 1--18,
  Translated from \emph{Teor.\ Veroyatnost.\ i Primenen.}\ {\bf 36} (1991),
  93--110 (Russian).
  \MR{1109019}.

\bibitem{burkholder}
D.~L. Burkholder, \emph{Exit times of {B}rownian motion, harmonic majorization,
  and {H}ardy spaces}, Advances in Math. \textbf{26} (1977), no.~2, 182--205.
  \MR{0474525}.

\bibitem{chung}
K.~L. Chung, \emph{Markov {C}hains with {S}tationary {T}ransition
  {P}robabilities}, second ed., Springer-Verlag, New York, 1967.
  \MR{0217872}.

\bibitem{ck18}
W.~Cygan and J.~Kloas, \emph{On recurrence of the multidimensional {L}indley
  process}, Electron. Commun. Probab. \textbf{23} (2018), Paper No. 4, 14pp.
  \MR{3771762}.

\bibitem{dw}
D.~Denisov and V.~Wachtel, \emph{Random walks in cones}, Ann. Probab.
  \textbf{43} (2015), no.~3, 992--1044.
  \MR{3342657}.

\bibitem{fayolle89}
G.~Fayolle, \emph{On random walks arising in queueing systems: ergodicity and
  transience via quadratic forms as {L}yapounov functions. {I}}, Queueing
  Systems Theory Appl. \textbf{5} (1989), no.~1-3, 167--183.
  \MR{1032553}.

\bibitem{fim}
G.~Fayolle, R.~Iasnogorodski, and V.~Malyshev, \emph{Random {W}alks in the
  {Q}uarter {P}lane}, second ed., Probability Theory and Stochastic Modelling,
  vol.~40, Springer, 2017.
  \MR{3617630}.

\bibitem{fkm}
G.~Fayolle, P.~J.~B. King, and I.~Mitrani, \emph{The solution of certain two-dimensional {M}arkov models}, Adv. in Appl. Probab. \textbf{14} (1982),
  no.~2, 295--308.
  \MR{0650124}.

\bibitem{fmm92}
G.~Fayolle, V.~A. Malyshev, and M.~V. Menshikov, \emph{Random walks in a
  quarter plane with zero drifts. {I}. {E}rgodicity and null recurrence}, Ann.
  Inst. H. Poincar\'{e} Probab. Statist. \textbf{28} (1992), no.~2, 179--194.
  \MR{1162572}.

\bibitem{fmm}
G.~Fayolle, V.~A. Malyshev, and M.~V. Menshikov, \emph{Topics in the {C}onstructive {T}heory of {C}ountable {M}arkov
  {C}hains}, Cambridge University Press, Cambridge, 1995.
  \MR{1331145}.

\bibitem{freedman}
D.~Freedman, \emph{Markov {C}hains}, Springer-Verlag, New York-Berlin, 1983,
  Corrected reprint of the 1971 original. \MR{0686269}.

\bibitem{gmmw}
N.~Georgiou, M.~V. Menshikov, A.~Mijatovi\'{c}, and A.~R. Wade, \emph{Anomalous
  recurrence properties of many-dimensional zero-drift random walks}, Adv. in
  Appl. Probab. \textbf{48} (2016), no.~A, 99--118.
  \MR{3539299}.

\bibitem{kingman}
J.~F.~C. Kingman, \emph{The ergodic behaviour of random walks}, Biometrika
  \textbf{48} (1961), 391--396.
\MR{0141158}.

\bibitem{KH87}
L.~A. Klein~Haneveld, \emph{Random walk on the quadrant}, Stochastic Process.
  Appl. \textbf{26} (1987), 228.

\bibitem{khp}
L.~A. Klein~Haneveld and A.~O. Pittenger, \emph{Escape time for a random walk
  from an orthant}, Stochastic Process. Appl. \textbf{35} (1990), no.~1, 1--9.
  \MR{1062579}.

\bibitem{kw19}
J.~Kloas and W.~Woess, \emph{Multidimensional random walk with reflections},
  Stochastic Process. Appl. \textbf{129} (2019), no.~1, 336--354. \MR{3907001}.

\bibitem{km14}
M.~Kobayashi and M.~Miyazawa, \emph{Tail asymptotics of the stationary
  distribution of a two-dimensional reflecting random walk with unbounded
  upward jumps}, Adv. in Appl. Probab. \textbf{46} (2014), no.~2, 365--399. \MR{3215538}.

\bibitem{Lamp63}
J.~Lamperti, \emph{Criteria for stochastic processes. {II}. {P}assage-time
  moments}, J. Math. Anal. Appl. \textbf{7} (1963), 127--145.
  \MR{0159361}.

\bibitem{iain}
I.~M. MacPhee, M.~V. Menshikov, and A.~R. Wade, \emph{Moments of exit times
  from wedges for non-homogeneous random walks with asymptotically zero
  drifts}, J. Theoret. Probab. \textbf{26} (2013), no.~1, 1--30. 
  \MR{3023832}.

\bibitem{malyshev70}
V.~A. Malyshev, \emph{Random walks, the {W}iener--{H}opf equation in a quadrant of
  the plane, {G}alois automorphisms}, Izdat. Moskov. Univ., Moscow, 1970
  (Russian). \MR{0428464}.

\bibitem{malyshev72b}
V.~A. Malyshev, \emph{The classification of two-dimensional positive random walks and
  almost linear semi-martingales}, Soviet Math. Doklady \textbf{13} (1972),
  136--139. \MR{0309202}.

\bibitem{mm79}
V.~A. Malyshev and M.~V. Menshikov, \emph{Ergodicity, continuity and
  analyticity of countable {M}arkov chains}, Trans. Moscow Math. Soc.
  \textbf{1} (1979), 1--48, Translated from \emph{Trudy Moskov.\ Mat.\
  Obshch.}\ {\bf 39} (1979), 3--48 (Russian). \MR{0544940}.

\bibitem{MenPet04}
M.~Menshikov and D.~Petritis, \emph{Markov chains in a wedge with excitable
  boundaries}, Analytic {M}ethods in {A}pplied {P}robability, Amer. Math. Soc.
  Transl. Ser. 2, vol. 207, Amer. Math. Soc., Providence, RI, 2002,
  pp.~141--164. \MR{1992212}.

\bibitem{mp2014}
M.~Menshikov and D.~Petritis, \emph{Explosion, implosion, and moments of passage times for
  continuous-time {M}arkov chains: a semimartingale approach}, Stochastic
  Process. Appl. \textbf{124} (2014), no.~7, 2388--2414.
  \MR{3192501}.

\bibitem{Bluebook}
M.~Menshikov, S.~Popov, and A.~Wade, \emph{Non-homogeneous {R}andom {W}alks},
  Cambridge Tracts in Mathematics, vol. 209, Cambridge University Press,
  Cambridge, 2017.
  \MR{3587911}.

\bibitem{vladas}
M.~V. Menshikov, A.~Mijatovi\'{c}, and A.~R. Wade, \emph{Reflecting random
  walks in curvilinear wedges}, In and {O}ut of {E}quilibrium 3. {C}elebrating
  {V}ladas {S}idoravicius, Progr. Probab., vol.~77, Birkh\"{a}user/Springer,
  2021, pp.~637--675.
  \MR{4237286}.

\bibitem{peigne92}
M.~Peign\'{e}, \emph{Marches de {M}arkov sur le semi-groupe des contractions de
  {${\bf R}^d$}. {C}as de la marche al\'{e}atoire \`a pas markoviens sur
  {$({\bf R}^+)^d$} avec chocs \'{e}lastiques sur les axes}, Ann. Inst. H.
  Poincar\'{e} Probab. Statist. \textbf{28} (1992), no.~1, 63--94. \MR{1158738}.

\bibitem{pw21}
M.~Peign\'{e} and W.~Woess, \emph{Recurrence of two-dimensional queueing
  processes, and random walk exit times from the quadrant}, Ann. Appl. Probab.
  \textbf{31} (2021), no.~6, 2519--2537.
  \MR{4350966}.

\bibitem{raschel}
K.~Raschel, \emph{Random walks in the quarter plane, discrete harmonic
  functions and conformal mappings}, Stochastic Process. Appl. \textbf{124}
  (2014), no.~10, 3147--3178, With an appendix by Sandro Franceschi.
  \MR{3231615}.

\bibitem{rosenkrantz}
W.~A. Rosenkrantz, \emph{Ergodicity conditions for two-dimensional {M}arkov
  chains on the positive quadrant}, Probab. Theory Related Fields \textbf{83}
  (1989), no.~3, 309--319.
  \MR{1017399}.

\bibitem{spitzer}
F.~Spitzer, \emph{Some theorems concerning {$2$}-dimensional {B}rownian
  motion}, Trans. Amer. Math. Soc. \textbf{87} (1958), 187--197.
  \MR{0104296}.

\bibitem{vw}
S.~R.~S. Varadhan and R.~J. Williams, \emph{Brownian motion in a wedge with
  oblique reflection}, Comm. Pure Appl. Math. \textbf{38} (1985), no.~4,
  405--443. \MR{792398}.

\bibitem{zachary}
S.~Zachary, \emph{On two-dimensional {M}arkov chains in the positive quadrant
  with partial spatial homogeneity}, Markov Process. Related Fields \textbf{1}
  (1995), no.~2, 267--280. \MR{1403085}.

\end{thebibliography}

\end{document}